\documentclass[a4paper,centertags,oneside,12pt]{amsart}
\usepackage{mathrsfs}
\usepackage{appendix}
\usepackage{amssymb}
\usepackage{fancyhdr}
\usepackage{charter}
\usepackage{typearea}
\usepackage{pdfsync}
\usepackage{color}
\usepackage[colorlinks,linkcolor=blue,anchorcolor=blue,citecolor=green]{hyperref}
\usepackage{mathrsfs}
\usepackage[a4paper,top=2.3cm,bottom=2.5cm,left=2.2cm,right=2.2cm]{geometry}

\usepackage{enumitem}
\usepackage{color}
\setlist[1]{itemsep=5pt}
\newcommand{\comment}[1]{}

\makeatletter
      \def\@setcopyright{}
      \def\serieslogo@{}
      \makeatother

\newcommand{\mbb}{\mathbb}

\newcommand{\ov}{\overline}

\newcommand{\La}{\Lambda}
\newcommand{\pa}{\partial}

\newcommand{\Om}{\Omega}
\newcommand{\al}{\alpha}
\newcommand{\be}{\beta}

\newcommand{\la}{\lambda}
\newcommand{\ti}{\tilde}

\newcommand{\norm}[1]{\left\Vert#1\right\Vert}
\newcommand{\abs}[1]{\left\vert#1\right\vert}

\newtheorem{theorem}{Theorem}[section]
\newtheorem{lemma}[theorem]{Lemma}
\newtheorem{corollary}[theorem]{Corollary}
\newtheorem{question}[theorem]{Question}
\newtheorem{proposition}[theorem]{Proposition}

\newtheorem{definition}[theorem]{Definition}
\newtheorem*{conjecture}{Conjecture}
\newtheorem{example}[theorem]{Example}
\newtheorem{remark}[theorem]{Remark}

\numberwithin{equation}{section}

\begin{document}
\title{The Invariant Szeg{\H{o}} metric on strongly pseudoconvex domains}
\keywords{Szeg\H o kernel, Szeg\H o metric, $L^2$-cohomology, K\"ahler-Einstein metric}
\subjclass{Primary: 32F45; Secondary: 32A25}
\author{Anjali Bhatnagar}
\address{School of Mathematics and Statistics, Wuhan University, Wuhan, Hubei 430072, China.}
\email{anj.bhatngr28@gmail.com}
\author{Jiliang Fan}
\address{School of Mathematics and Statistics, Wuhan University, Wuhan, Hubei 430072, China.}
\email{jiliangfan@whu.edu.cn}

\begin{abstract}
The Fefferman–Szeg\H{o} metric $g_{\operatorname{FS}}^\Om$ on a $C^\infty$-smooth bounded strongly pseudoconvex domain $\Omega\subset\mathbb{C}^n$ is an invariant metric defined via the Fefferman surface measure. 
For this metric, we first establish the vanishing of its $L^2$-Dolbeault cohomology on $\Omega$ unless $p+q=n$:
\[
\dim{H}_{2}^{p,q}(\Omega)=0~~\text{if }  p+q\neq n\quad\text{and}\quad 
\dim {H}_{2}^{p,q}(\Omega)=\infty\;\text{if}\;p+q=n.
\]
Furthermore, we prove that this metric has $C^\infty$-smooth bounded geometry. 
Using this analytic property, we obtain several rigidity results. 
Specifically, if the Fefferman–Szeg\H{o} metric is a gradient K\"ahler–Ricci soliton, then $\Omega$ is biholomorphic to the unit ball $\mathbb{B}^n$. 
Moreover, if the metric has constant scalar curvature, it must be Einstein, and again $\Omega$ is biholomorphic to $\mathbb B^n$. 
In addition, we give a Ramadanov-type criterion using the Fefferman-Szeg\H{o} invariant function. 
Finally, for $n=2$, under the existence of a K\"ahler immersion into a finite-dimensional ball that maps boundary to boundary transversally, we show that the logarithmic term of the Fefferman-Szeg\H{o} kernel vanishes to infinite order; consequently the boundary is locally spherical and, for simply connected case $\Om$ is biholomorphic to $\mathbb{B}^2$.
\end{abstract}
\maketitle

\section{Introduction}
The characterization of the unit ball $\mathbb{B}^n\subset \mathbb C^n$ among bounded domains is a central problem in complex geometry. A well‑known conjecture in this direction, due to Cheng \cite{c79}, states that if the Bergman metric $g_{\operatorname{B}}^\Om$ on a $C^\infty$-smooth bounded strongly pseudoconvex domain $\Omega\subset\mathbb C^n$ is K\"ahler–Einstein, then $\Omega$ is biholomorphic to $\mathbb B^n$. Cheng's conjecture was first confirmed in $\mathbb C^2$ by Fu–Wong \cite{fw97} and Nemirovski–Shafikov \cite{ns06}, and later established in all dimensions by Huang-Xiao \cite{hx21}. In \cite{hl23}, they generalized this result to Stein spaces with compact smooth strongly pseudoconvex boundary. Yau \cite{y82} posed a broader question: if the Bergman metric on a bounded pseudoconvex domain is K\"ahler–Einstein, must the domain be biholomorphic to a homogeneous domain? This question was recently answered by Savale-Xiao \cite{sx25} for finite type pseudoconvex domains in $\mathbb{C}^2$, and subsequently extended by Hsiao, Huang and Li \cite{hhl26} to real analytic pseudoconvex domains and $h$-extendible pseudoconvex domains in $\mathbb{C}^n$. Moreover, a very recent result in \cite{hjl25} shows that the Bergman metric of an unbounded pseudoconvex domain in $\mathbb{C}^n$ cannot be K\"ahler-Einstein if its boundary contains a non‑smooth strongly pseudoconvex polyhedral point. 

A crucial tool in the proof of Cheng's conjecture is Lu's uniformization theorem \cite{l66}, which states that if a bounded domain $\Omega$ admits a complete Bergman metric $g_{\operatorname{B}}^\Om$ of constant holomorphic sectional curvature, then $\Omega$ is biholomorphic to $\mathbb B^n$. This result was extended in \cite{dw25a} by relaxing the curvature hypothesis. For the incomplete Bergman metric, significant progress was made by Dong–Wong \cite{dw22, dw25b}, followed by works of Huang–Li \cite{hli} and Ebenfelt, Treuer, and Xiao \cite{etx25}.

Another direction concerns the existence of a Kähler immersion
\[
f:(\Omega,\lambda g_{\operatorname{B}}^\Omega)\longrightarrow (\mathbb{B}^N,g_{\operatorname{B}}^{\mathbb{B}^N}),
\qquad N\geq n.
\]
When \(N=n\), such a map is locally a biholomorphic isometry and hence preserves
holomorphic sectional curvature. For \(N>n\), it realizes the scaled Bergman metric as
the metric induced from a finite-dimensional complex hyperbolic ball, giving an extrinsic analogue of the constant holomorphic sectional curvature condition.  This viewpoint
was recently studied by Palmieri \cite{p25} for the Bergman metric.

 Further extensions of Cheng's conjecture were considered in \cite{s26b} in the context of constant scalar curvature (cscK). More precisely, if the Bergman metric $g_{\operatorname{B}}^\Om$ on a $C^2$-smooth bounded strongly pseudoconvex domain $\Omega$ has cscK, then it is K\"ahler–Einstein. Moreover, if $\pa\Om$ is $C^\infty$-smooth then $\Omega$ is biholomorphic to $\mathbb{B}^n$.

A natural extension of a K\"ahler–Einstein metric $g$ is the notion of a \emph{K\"ahler–Ricci soliton}, which satisfies 
\[
\operatorname{Ric}(g) + \mathcal{L}_X g = \lambda g
\]
for a real holomorphic vector field $X$ and a constant $\lambda \in \mathbb{R}$. If $X = \frac{1}{2}\nabla f$ for some $C^\infty$-smooth function $f$, then $g$ is called a \emph{gradient K\"ahler–Ricci soliton}. If $X$ is Killing, then $g$ is K\"ahler-Einstein. Sha \cite{s26a} proved that on bounded pseudoconvex domains, any K\"ahler–Ricci solitons with closed dual $1$-form are K\"ahler–Einstein provided the metric is complete, has $C^1$-bounded geometry and negative pinched Ricci curvature near the boundary. This provides a soliton analogue of Cheng's conjecture. 

Another classical approach to characterizing $\mathbb B^n$ is the Ramadanov conjecture \cite{R81}. It states that for a $C^\infty$-smooth bounded strongly pseudoconvex domain in $\mathbb{C}^n$, the boundary is locally spherical if the logarithmic term in Fefferman's asymptotic expansion of the Bergman kernel vanishes to infinite order at the boundary. 
This conjecture is known to imply Cheng's conjecture. The analogous conjecture for the Fefferman-Szeg\H o kernel first appeared in the manifold setting in \cite{lt04} and was later formulated for domains by Englis–Zhang \cite{ez10}:

\begin{conjecture}\label{conj:szego}
If the logarithmic term in the asymptotic expansion of the Fefferman-Szeg\H{o} kernel on a $C^\infty$-smooth bounded strongly pseudoconvex domain $\Omega \subset \mathbb{C}^n$ vanishes to infinite order at the boundary, then the boundary $\pa\Om$ is locally spherical.
\end{conjecture}
This conjecture is known to hold for $n=2$ by work of Boutet de Monvel, and was later improved by Ebenfelt \cite{E18} under the assumption of transverse symmetry. For $n\geq 3$, the conjecture remains open.

The Bergman and Szeg\H{o} kernels are two fundamental reproducing kernels in complex analysis. The Bergman metric induced from the Bergman kernel has been extensively studied, whereas the Szeg\H{o} metric defined similarly using the Szeg\H{o} kernel has received less attention until recent years. One reason for this is the lack of a nice transformation formula for the Euclidean surface measure $\sigma_{\operatorname{E}}$ under biholomorphisms (see \cite[Lemma 4.1]{wx22}), even for $C^\infty$-smooth bounded strongly pseudoconvex domains. To address this issue, Fefferman \cite[p. 259]{f79} introduced an invariant surface measure $\sigma_{\operatorname{F}}$ on the boundary $\partial\Omega$ of such domains, now known as the Fefferman surface measure. Barrett and Lee \cite{bl14} initiated a systematic study of the Fefferman--Szeg\H{o} metric on these domains -- defined via the Fefferman--Szeg\H{o} kernel, which uses $\sigma_{\operatorname{F}}$ as its boundary measure -- and investigated its relationship with the Bergman and Carath\'eodory metrics. Krantz studied the representative coordinates, analytic continuation, and completeness of the Fefferman–Szeg\H{o} metric in \cite{k19}, with further investigations in \cite{k21}. The boundary behaviour of the Fefferman-Szeg\H o metric and several associated invariants has been studied in \cite{m08,b26}. In one dimension, the Fefferman–Szeg\H{o} metric was studied in \cite{bb24} through its intrinsic properties, including geodesics (see \cite{b25} for higher dimension), curvature, and $L^2$-cohomology. Using harmonic analysis techniques from \cite{dlw25}, Yuan \cite{y25} extended the Fefferman surface area measure to $C^2$-smooth bounded domains in $\mathbb C^n$ provided the density function of $\sigma_{\operatorname{F}}$ (with respect to $\sigma_{\operatorname{E}}$) belongs to the Muckenhoupt class $A_2(\partial\Omega)$. We note that on a $C^2$-smooth bounded strongly pseudoconvex domain, the Fefferman surface area measure $\sigma_{\operatorname{F}}$ is well defined on the boundary $\partial\Omega$ and satisfies $\sigma_{\operatorname{F}}\approx \sigma_{\operatorname{E}}$ on $\partial\Omega$, so the techniques from \cite{y25} are not needed.

The following theorem extends the study of $L^2$-Dolbeault cohomology for the Fefferman–Szeg\H{o} metric to higher dimensions $n\geq 2$.

\begin{theorem}\label{l2-co}
Let $\Omega \subset \mathbb{C}^n$, $n\geq 2$, be a $C^\infty$-smooth bounded strongly pseudoconvex domain equipped with the Fefferman–Szeg\H{o} metric $g_{\operatorname{FS}}^\Omega$. Then, we have
\[
\dim H^{p,q}_{2}(\Omega)= \begin{cases}
0 & \text{if } p+q \neq n,\\
\infty & \text{if } p+q = n.
\end{cases}
\]
\end{theorem}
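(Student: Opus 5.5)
The strategy is to reduce the statement to known results on $L^2$-cohomology of complete Kähler metrics that are asymptotically of Bergman (complex hyperbolic) type near a strongly pseudoconvex boundary. Donnelly and Fefferman proved this dichotomy for the Bergman metric, and Donnelly later extended it to complete Kähler metrics comparable to the Bergman metric with a bounded global potential. The point to exploit is that the $L^2$-Dolbeault cohomology with respect to a complete Kähler metric depends only on the quasi-isometry class of the metric together with a Kähler potential whose gradient is bounded. Indeed, vanishing off the middle degree follows from Gromov's Kähler hyperbolicity criterion, or from the Donnelly–Fefferman weighted estimate, provided $\omega=i\partial\bar\partial\phi$ with $|\partial\phi|_g$ bounded.

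The first step is to write $g_{\operatorname{FS}}^\Om = i\partial\bar\partial \log S(z,z)$, where $S$ is the Fefferman–Szeg\H{o} kernel. By Boutet de Monvel–Sjöstrand, $S(z,z)=\varphi\, r^{-n}+\psi\log r$ with $\varphi,\psi\in C^\infty(\overline\Om)$, $\varphi>0$ on $\pa\Om$, and $r$ a defining function with $r>0$ inside. Hence $\phi=\log S = -n\log r + \log\varphi + O(r^n\log r)$. From this I would check the following.

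\begin{enumerate}
\item[(a)] $g_{\operatorname{FS}}^\Om$ is complete and uniformly equivalent to the Bergman metric, and hence to $i\partial\bar\partial(-\log r)$. Barrett–Lee and Krantz establish this, and it also follows from the expansion, since $n\, i\partial\bar\partial(-\log r)$ dominates.
\item[(b)] $|\partial\phi|_{g}$ is bounded. Here $\partial\phi = -n\,\partial r/r + O(1)$, the norm of $\partial r/r$ in the Bergman-type metric is bounded, and the error terms are bounded because the $O(1)$ coefficients are smooth up to the boundary and the metric dominates the Euclidean one.
\end{enumerate}

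For the vanishing when $p+q\neq n$, I would use the Donnelly–Fefferman/Gromov argument. If $\omega = d\eta$ with $\eta = d^c\phi$ bounded, then the Lefschetz operator $L$ commutes with $\Delta$, and the identity $[L^*\!,\, \text{wedge with }\eta]$ yields $\|\alpha\|^2\le C\|\alpha\|\,\|d\alpha\|$-type estimates for harmonic forms. Concretely, every $L^2$ harmonic $k$-form vanishes for $k\ne n$, and $\Delta$ has a spectral gap in those degrees. Since the metric is Kähler and complete, the $\bar\partial$-Laplacian is half the $d$-Laplacian. A spectral gap gives closed range, so the reduced and unreduced $L^2$-Dolbeault cohomologies agree and are isomorphic to the harmonic spaces $\mathcal H^{p,q}_2$, which therefore vanish for $p+q\ne n$.

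For $p+q=n$, I need infinitely many independent $L^2$ harmonic forms. I would follow Donnelly–Fefferman: produce them by solving $\bar\partial$ with weights, or more simply invoke Donnelly's theorem that middle-degree harmonic spaces are infinite dimensional for metrics quasi-isometric to the Bergman metric on strongly pseudoconvex domains. The middle-degree $L^2$ condition is conformally invariant in real dimension $2n$, so the space of $L^2$ harmonic $n$-forms is a quasi-isometry invariant, being isomorphic to reduced $L^2$ de Rham cohomology. Since it is infinite dimensional for the Bergman metric, it is infinite dimensional here, and the $(p,q)$ splitting with $p+q=n$ follows from the Kähler identities. To see that every $(p,q)$ type actually appears, I would use the explicit construction: take $\bar\partial$-closed forms built from holomorphic functions times $(\partial\bar\partial\phi)^{k}$, which are $L^2$ in middle degree, then project off exact forms via the $L^2$ estimate.

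I expect the main obstacle to be the bounded-gradient estimate (b) uniformly up to the boundary, together with full completeness and comparability to the Bergman metric. This depends on controlling the $\psi\log r$ term and derivatives of $\varphi$ in Fefferman's expansion, and I would first check it directly in local coordinates adapted to $r$.
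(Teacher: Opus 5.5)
Your vanishing argument for $p+q\neq n$ is correct and follows the paper's route, namely Donnelly's theorem applied to the primitive $-i\partial\log S$. Your way of bounding that primitive, via $g_{\operatorname{FS}}\gtrsim i\partial\bar\partial(-\log r)\ge i\,\partial r\wedge\bar\partial r/r^2$ together with $g_{\operatorname{FS}}\gtrsim r^{-1}g_{\operatorname{Euc}}$, is cleaner than the paper's split into the regimes $\nu^2\ge C\tau$ and $\nu^2<C\tau$.

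\emph{The gap is in the middle degree.}

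Quasi-isometry invariance of reduced $L^2$ de Rham cohomology only gives $\dim\bigoplus_{p+q=n}\mathcal H^{p,q}_2(\Omega)=\infty$. The K\"ahler identities give the splitting but say nothing about the size of each summand. Conjugation and the Hodge star only give $\dim\mathcal H^{p,q}_2=\dim\mathcal H^{q,p}_2$. So, for instance, you have not shown $\dim\mathcal H^{1,1}_2=\infty$ when $n=2$.

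The explicit construction you propose to fill this does not work. Take $f\,dz_I\wedge\omega^q$ with $f$ holomorphic, $q\ge1$ and $|I|=n-2q$.
\begin{itemize}
\item In the $g_{\operatorname{FS}}$-norm, tangential covectors have squared length $\approx r$ and the normal covector $\approx r^2$.
\item The volume form satisfies $dV_g\approx r^{-n-1}dV$.
\item Wedging with $\omega^q$ is a pointwise isomorphism on $(n-2q)$-forms.
\end{itemize}
Hence $|f\,dz_I\wedge\omega^q|_g^2\,dV_g\gtrsim |f|^2 r^{-2q}\,dV$. A holomorphic $f$ with $\int_\Omega |f|^2 r^{-1}\,dV<\infty$ vanishes identically, so there is no nonzero $L^2$ form of this type to project. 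Only the bidegrees $(n,0)$ and $(0,n)$ come for free, because there the $L^2$ norm is metric-independent and you recover the Bergman space.

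\emph{The fix} is to keep track of bidegree when you use quasi-isometry invariance. You announce this idea at the outset but then trade it for de Rham cohomology.
\begin{itemize}
\item The maximal $\bar\partial$ does not depend on the metric.
\item $L^2_{p,q}$ is the same space, with equivalent norms, for $g_{\operatorname{FS}}$ and $g_{\operatorname{B}}$ (by Barrett--Lee comparability).
\item Therefore $\ker\bar\partial/\overline{\operatorname{Im}\bar\partial}$ in each bidegree is literally the same vector space for both metrics.
\item For a complete K\"ahler metric this space is isomorphic to $\mathcal H^{p,q}_2$, since $\Delta_d=2\Delta_{\bar\partial}$.
\end{itemize}
Donnelly--Fefferman's bidegree-wise theorem for the Bergman metric then transfers directly. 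In fact it yields the whole statement, vanishing included.

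Alternatively, do what the paper does. Your step (a) amounts to $g_{\operatorname{FS}}\approx(-\rho)^{-1}g_{\operatorname{Euc}}+\rho^{-2}\partial\rho\,\bar\partial\rho$. Ohsawa's localized criterion with $a=1$, $b=2$ then gives $\dim H^{p,q}_2=\infty$ for every $p+q=n$ at once.
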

The $L^2$-cohomology for the Bergman metric on $C^\infty$-smooth bounded strongly pseudoconvex domains in $\mathbb{C}^n$ was studied by Donnelly–Fefferman \cite{df83} and Donnelly \cite{d94}, and in a more general setting by McNeal \cite{m02} and Ohsawa \cite{o89}, among others.

The next theorem collects several rigidity results for the Fefferman-Szeg\H{o} metric: the K\"ahler-Ricci soliton analogue of Cheng's conjecture, and the constant scalar curvature case.

\begin{theorem}\label{main:rigidity}
Let $\Omega \subset \mathbb{C}^n$ be a $C^\infty$-smooth bounded strongly pseudoconvex domain equipped with the Fefferman–Szeg\H{o} metric $g_{\operatorname{FS}}^\Omega$. Then the following hold:
\begin{enumerate}
\item[(i)] $g_{\operatorname{FS}}^\Omega$ has $C^\infty$-smooth bounded geometry.
\item[(ii)] $g_{\operatorname{FS}}^\Omega$ is a gradient K\"ahler--Ricci soliton if and only if $\Omega$ is biholomorphic to $\mathbb B^n$.
\item[(iii)] If $g_{\operatorname{FS}}^\Omega$ has constant scalar curvature, then it is Einstein. Moreover, $\Omega$ is biholomorphic to $\mathbb B^n$.
\end{enumerate}
\end{theorem}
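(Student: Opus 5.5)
For (i), the plan is to work with the potential $\log S_{\operatorname{FS}}(z,z)$, where $S_{\operatorname{FS}}$ is the Fefferman--Szeg\H{o} kernel, and to use its Boutet de Monvel--Sj\"ostrand type expansion near $\partial\Omega$. With a defining function $r$ satisfying $\Omega=\{r<0\}$, this reads
\[
S_{\operatorname{FS}}(z,z)=\phi(z)(-r(z))^{-n}+\psi(z)\log(-r(z)),\qquad \phi,\psi\in C^\infty(\overline\Omega),\ \phi>0 \text{ on } \partial\Omega .
\]
Consequently $\log S_{\operatorname{FS}}=-n\log(-r)+\log\phi+O(r^n\log(-r))$, and $g_{\operatorname{FS}}^\Omega=i\partial\bar\partial\log S_{\operatorname{FS}}$ is a perturbation of $n$ times the approximately hyperbolic metric $-i\partial\bar\partial\log(-r)$. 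This is the same structure as the Bergman case, with exponent $n$ in place of $n+1$.

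For bounded geometry I will follow the scaling method used for the Bergman metric, as in Cheng--Yau. Near each boundary point, take Pinchuk-type scaling maps $A_j$ sending Whitney--Kor\'anyi polydiscs of fixed hyperbolic size onto a fixed ball in the Siegel domain model. Pulled back by $A_j$, the potential $-n\log(-r)+\log\phi+\dots$ converges in $C^k$ on compact sets, for every $k$, to the ball's potential. The error terms $\psi\log(-r)(-r)^n/\phi$ and their derivatives are controlled because $r^n\log(-r)$ becomes $o(1)$ under the anisotropic scaling. On the interior compact part smoothness suffices. This yields charts of uniform size in which the metric coefficients are uniformly equivalent to Euclidean and have all derivatives bounded, which is $C^\infty$ bounded geometry. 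The main obstacle is tracking the logarithmic term uniformly in all derivatives under the anisotropic dilation. I would first verify that $(-r)^{m}\log(-r)$ with $m\ge n\ge 2$, differentiated with the weight $(-r)^{|\alpha|/2}$ or $(-r)^{|\alpha|}$ appropriate to each direction, stays bounded.

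For (ii) and (iii), bounded geometry makes available the results cited in the introduction. For (ii), a gradient K\"ahler--Ricci soliton has closed dual one-form. Moreover, $g_{\operatorname{FS}}^\Omega$ is complete, has $C^1$-bounded geometry by (i), and its Ricci curvature tends to that of the ball, namely $-\tfrac{n+1}{n}g$, near the boundary, because the metric is asymptotically $n$ times complex hyperbolic. It is therefore negatively pinched near the boundary, and Sha's theorem \cite{s26a} shows that the metric is K\"ahler--Einstein.

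In the Einstein case the Ricci potential gives $\det(g_{\operatorname{FS}})=c\,|h|^2 S_{\operatorname{FS}}^{(n+1)/n}$ with $h$ holomorphic. Comparing the boundary expansions forces the log term $\psi$ to vanish to infinite order. I would then conclude sphericity of $\partial\Omega$ as follows: for $n=2$ by Boutet de Monvel's result. In general, I would use the fact that a K\"ahler--Einstein potential equal to the Cheng--Yau potential $-\tfrac{n}{n+1}\log$ of Fefferman's solution leads, via the Huang--Xiao argument \cite{hx21} (localization plus the Kerr--Lu style uniformization), to $\Omega\cong\mathbb B^n$. The converse direction of (ii) is immediate because the ball's $g_{\operatorname{FS}}$ is a multiple of its Bergman metric.

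For (iii), constant scalar curvature together with bounded geometry lets me adapt \cite{s26b}. The scalar curvature tends to the ball value at $\partial\Omega$, so the constant equals the Einstein constant. The traceless Ricci tensor $E$ of a cscK metric satisfies $\bar\partial^*$-type harmonicity, and I would show $E\in L^2$ with decay from the expansion. Then I would apply the vanishing in Theorem \ref{l2-co}: $E$ corresponds to an $L^2$-harmonic $(1,1)$-form, which vanishes when $n\neq2$. For $n=2$ I would instead use its primitive/traceless structure via a Bochner argument. Once the metric is shown to be Einstein, the previous step gives the ball.
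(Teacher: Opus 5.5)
\textbf{The gap is in (iii).} Your plan needs the traceless Ricci form $E$ to be an $L^2$ harmonic $(1,1)$-form killed by Theorem~\ref{l2-co}. This fails for $n=2$ and for $n\ge 4$.

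For $n\ge4$: once $\mathrm{Sca}\equiv-(n+1)$, one has $E=-\sqrt{-1}\,\partial\bar\partial\log\mathfrak R_\Omega$. The expansion behind Theorem~\ref{gen_rama_conj} gives $\log\mathfrak R_\Omega=\log C_n+\frac{3-3n}{n}q\rho^2+o(\rho^2)$, with $q$ a nonzero multiple of the Chern--Moser term. So near any non-umbilical boundary point, $|E|_g$ is comparable to $\rho^2$; the $\partial\rho\wedge\bar\partial\rho$ component alone has that size. Since $dV_g\approx|\rho|^{-(n+1)}dV$, square-integrability would require $\int_\Omega|\rho|^{3-n}|q|^2\,dV<\infty$. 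That amounts to already knowing $q|_{\partial\Omega}\equiv0$, which is essentially the conclusion, so ``decay from the expansion'' cannot supply it.

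For $n=2$ the vanishing theorem is unavailable. Theorem~\ref{l2-co} gives $\dim\mathcal H^{1,1}_2(\Omega)=\infty$, and every such form is primitive: its trace is an $L^2$ harmonic function on a complete manifold of infinite volume, hence zero. So primitivity plus harmonicity cannot force $E=0$, and your proposed Bochner argument has no mechanism to work with.

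\textbf{The missing idea} is that $\log\mathfrak R_\Omega$ is a global potential for $E$. From $\log\det G_\Omega=\frac{n+1}{n}\log S_\Omega+\log\mathfrak R_\Omega$ one gets $\operatorname{Ric}_{i\bar j}+\partial_{i\bar j}\log\mathfrak R_\Omega=-\frac{n+1}{n}g_{i\bar j}$. Tracing, and using $\mathrm{Sca}\equiv-(n+1)$ (Proposition~\ref{equa f}), gives $\Delta_{g_{\operatorname{FS}}^\Omega}\log\mathfrak R_\Omega=0$. Since $\mathfrak R_\Omega\to C_n$ at every boundary point \cite{b26}, the maximum principle for $g^{i\bar j}\partial_{i\bar j}$ on the bounded domain gives $\mathfrak R_\Omega\equiv C_n$. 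Hence $\operatorname{Ric}=-\frac{n+1}{n}g_{\operatorname{FS}}^\Omega$ in every dimension, with no $L^2$ theory or bounded geometry needed. Any extra decay of $E$ you might hope to extract would have to come from this same trace identity anyway.

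\textbf{Two smaller points.}

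In (ii), your reduction to the K\"ahler--Einstein case via Sha's theorem is the paper's. The step from K\"ahler--Einstein to the ball should simply cite Yuan's theorem (Theorem~\ref{Yuan}), because your sketch does not prove it:
\begin{itemize}
\item Matching expansions in $J[cS_\Omega^{-1/n}]=1$ yields $\psi|_{\partial\Omega}=0$. The next logarithmic coefficient, however, sits at $\rho^{n+2}\log\rho$, the indicial power of Fefferman's operator. There it is tied to the obstruction function rather than forced to vanish.
\item For $n=2$, local sphericity gives $\Omega\cong\mathbb B^2$ only under simple connectivity.
\end{itemize}

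For (i), your Pinchuk-scaling argument is a genuinely different but workable route. It uses the Boutet de Monvel--Sj\"ostrand expansion directly: after dilation, $(-r)^n\log(-r)$ and all its derivatives are $O(\delta^n\log\delta^{-1})$ on compacts. It also yields the boundary limits of Ricci and scalar curvature as a by-product. The paper instead needs only the leading-order comparison $S_\Omega\approx(\det g_{\operatorname{B}}^\Omega)^{n/(n+1)}$, obtained via the invariant $SK_\Omega$ and Zimmer's embeddings $\Phi_\zeta$, followed by Cauchy estimates on the local kernels $S^{\mathrm{loc}}_\zeta$. You should still state the injectivity-radius step explicitly: injective charts with $A^*g\approx g_{\operatorname{Euc}}$ plus bounded curvature give a positive lower bound, e.g.\ via \cite{LSY05}.
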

Part (i) provides the key analytic property of the Fefferman–Szeg\H{o} metric. Part (ii) relies on a general result of Sha \cite[Theorem 1.1]{s26a}, which requires the metric to have $C^1$-bounded geometry, a condition guaranteed by part (i). Part (iii) is motivated by \cite[Theorem 1.4 (2)]{s26b} on the Bergman metric. Yuan \cite{y25} first established the K\"ahler--Einstein rigidity for the Fefferman--Szeg\H{o} metric, i.e., if $g_{\operatorname{FS}}^\Omega$ is K\"ahler--Einstein then $\Omega$ is biholomorphic to $\mathbb B^n$. The above results extend Yuan's theorem in two directions: to K\"ahler--Ricci solitons and to constant scalar curvature.
\begin{question}
    It is natural to ask whether the $C^\infty$-smoothness assumption on the boundary can be relaxed in Theorem \ref{main:rigidity}.
\end{question}
To prove Cheng's conjecture for the Fefferman–Szeg\H o metric, Yuan \cite{y25} used a result of Fu–Wong \cite{fw97} to establish Conjecture \ref{conj:szego}. Inspired by \cite[Theorem 5.13]{m21}, we weaken the assumption the K\"ahler--Einstein assumption to an asymptotic K\"ahler--Einstein condition near the boundary and prove Conjecture \ref{conj:szego} without relying on the Fu–Wong's result. We hope that our result will shed further light on Conjecture~\ref{conj:szego}.

\begin{theorem}\label{gen_rama_conj}
Let $\Omega=\{z\in\mathbb C^n:\rho(z)<0\}$, $n\geq 4$, be a $C^\infty$-smooth bounded strongly pseudoconvex domain with a $C^\infty$-smooth strongly plurisubharmonic defining function $\rho$. Let $\mathfrak{R}_\Omega(z)$ be the Fefferman–Szeg\H{o} invariant function. Then the boundary $\partial\Omega$ is locally spherical if and only if for every $p\in\partial\Omega$,
\[
\lim_{z\to p}\rho(z)^{-2}\left(\mathfrak{R}_\Omega(z)-C_n\right)=0,
\] where $C_n=((n-1)!)^{-\frac{n+1}{n}}n^n\pi^{n+1}$.
\end{theorem}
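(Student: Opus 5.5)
The plan is to compute the boundary asymptotics of $\mathfrak{R}_\Omega$ from the Boutet de Monvel–Sjöstrand expansion of the Fefferman–Szeg\H{o} kernel. We write it in a Fefferman (approximate Monge–Amp\`ere) defining function $r$, which is comparable to $\rho$, so that the condition $\rho^{-2}(\mathfrak{R}_\Omega-C_n)\to 0$ is equivalent to the same condition with $r$ in place of $\rho$. I take $\mathfrak{R}_\Omega$ to be the biholomorphically invariant ratio of $\det(g_{\operatorname{FS}}^\Omega)$ to the appropriate power of $S_\Omega(z,z)$; its invariance comes from the transformation law of $\sigma_{\operatorname{F}}$. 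Near $\partial\Omega$,
\[
S_\Omega(z,z)=\frac{\varphi(z)}{(-r(z))^{n}}+\psi(z)\log(-r(z)),
\]
with $\varphi,\psi$ smooth up to the boundary. The key input, which I expect to take from the Hirachi–Komatsu–Nakazawa style computation of the Szeg\H{o} expansion with Fefferman's measure, is that in this normalization
\[
\varphi=\frac{(n-1)!}{\pi^n}\bigl(1+c_n\,\|S\|^2\,r^2+O(r^3)\bigr),
\]
where $S$ is the Chern–Moser curvature tensor and $c_n\neq0$ is a dimensional constant. The first-order term vanishes precisely because $r$ solves Fefferman's equation to high order.

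Next I will substitute this expansion into $g_{\operatorname{FS}}=i\partial\bar\partial\log S_\Omega$. Write $\log S_\Omega=-n\log(-r)+\log\varphi+O(r^{n}\log(-r))$. Then I will compute $\det\bigl(\partial_j\bar\partial_k\log S_\Omega\bigr)$ using the identity $J(r)=\det\begin{pmatrix} r & r_{\bar k}\\ r_j& r_{j\bar k}\end{pmatrix}=1+O(r^{n+1})$. The main term $n^{n}\,(-r)^{-(n+1)}J(r)$ reproduces the ball, and after dividing by $S_\Omega^{(n+1)/n}$ it yields exactly $C_n=((n-1)!)^{-\frac{n+1}{n}}n^n\pi^{n+1}$. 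The corrections from $\log\varphi=\log\frac{(n-1)!}{\pi^n}+c_n\|S\|^2r^2+O(r^3)$ enter through $i\partial\bar\partial(r^2\|S\|^2)$ and through the power $\varphi^{-(n+1)/n}$. This should give
\[
\mathfrak{R}_\Omega-C_n=\kappa_n\,\|S\|^2\,r^2+O(r^3)+O\bigl(r^{n}\log(-r)\bigr).
\]
Here $\kappa_n$ is an explicit combination of $c_n$ and $n$. The assumption $n\ge4$ should guarantee two things: that the logarithmic remainder $r^{n}\log(-r)$ (together with the $r^{n+1}$ error in $J$) is $o(r^2)$ even after the two differentiations, and that $\kappa_n\neq0$.

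\emph{Forward direction.} If $\rho^{-2}(\mathfrak{R}_\Omega-C_n)\to0$ at every $p\in\partial\Omega$, then the expansion gives $\|S\|^2\equiv0$ on $\partial\Omega$. Since the CR dimension $n-1\ge2$, vanishing of the Chern–Moser tensor is equivalent to local sphericity of $\partial\Omega$.

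\emph{Converse.} If $\partial\Omega$ is locally spherical, then near each $p$ there is a local CR diffeomorphism onto a piece of $\partial\mathbb B^n$, and it extends to a local biholomorphism. Both $\varphi$ modulo $O(r^{n+1})$ and the Fefferman defining function are locally determined, and they transform covariantly under local biholomorphisms (using the invariance of $\sigma_{\operatorname{F}}$). Therefore $\varphi\equiv\frac{(n-1)!}{\pi^n}$ modulo $O(r^{n+1})$, as on the ball, where $\mathfrak{R}_{\mathbb B^n}\equiv C_n$. Hence $\mathfrak{R}_\Omega-C_n=O(r^{n-1}\log(-r))=o(r^2)$.

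The main obstacle is the second step. One must pin down the $r^2$ coefficient of $\varphi$ in the Szeg\H{o} (not Bergman) setting with Fefferman's measure, and show that it is a nonzero multiple of $\|S\|^2$. One must also track carefully that no other term at order $r^2$ survives in the determinant. I would first try to import the known Bergman computation (Graham's and Hirachi's invariant theory for the coefficients of the Fefferman expansion) and adapt it via the Boutet de Monvel–Sjöstrand relation between the Szeg\H{o} and Bergman symbols. Failing that, I would compute directly in Moser normal form at $p$.
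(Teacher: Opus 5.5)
Your proposal is correct and follows essentially the paper's route: pass to a Fefferman defining function, insert the Hirachi--Komatsu--Nakazawa expansion $\phi=\frac{(n-1)!}{\pi^n}+b_n|A^0_{2\bar 2}|^2\rho^2+O(\rho^3)$, and expand $\mathfrak R_\Omega$ to order $\rho^2$. The paper does the expansion by writing $S_\Omega=(n^n/C_n)^{\frac{n}{n+1}}\Phi^{-n}$ with $\Phi=\rho Q^{-1/n}$, so that $\mathfrak R_\Omega=C_nJ[\Phi]=C_n\bigl(1+\frac{3-3n}{n}\,q\rho^2\bigr)+o(\rho^2)$.

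This settles the point you left open. In your notation $\kappa_n=\frac{3(1-n)}{n}C_n c_n$, which is nonzero for every $n\ge 2$ once $c_n\neq 0$. So the hypothesis $n\ge 4$ is used only to make $\rho^n\psi\log\rho$ negligible, not to get $\kappa_n\neq 0$. The converse also follows from the same limit formula, since $q$ vanishes on $\partial\Omega$ when the Chern--Moser tensor does; your separate localization argument is therefore unnecessary, though harmless.
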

The proof of Theorem \ref{gen_rama_conj} relies on the asymptotic expansion of the Fefferman–Szeg\H o kernel, which creates an obstruction in dimensions $2$ and $3$. Therefore, the following question remains open:

\begin{question}
For $n = 2$ or $n = 3$, does the Fefferman–Szeg\H o invariant characterize local sphericity of the boundary of a $C^\infty$-smooth bounded strongly pseudoconvex domain in $\mathbb{C}^n$?
\end{question}


Finally, inspired by \cite[Theorem 1]{p25} for the Bergman metric; we obtain a result connecting the Fefferman–Szeg\H{o} metric with the local sphericity problem for $C^2$-smooth bounded strongly pseudoconvex domains in dimension $n=2$.

\begin{theorem}\label{kah_imm_ball}
Let $\Omega \subset \mathbb{C}^n$ be a $C^2$-smooth bounded strongly pseudoconvex domain. 
Assume that there exists $\lambda>0$ such that $\frac{N}{\lambda}-n\in\mathbb{N}$ and a K\"ahler immersion
\[
f \colon (\Omega,\lambda\,g_{\operatorname{FS}}^\Omega)\longrightarrow (\mathbb{B}^N, g_{\operatorname{FS}}^{\mathbb B^N}), 
\qquad N<\infty,
\]
which extends smoothly to a map $F\colon U\to V$, where $U$ and $V$ are neighbourhoods of 
$\overline{\Omega}$ and $\overline{\mathbb{B}^N}$, respectively. 
Suppose further that
\[
F(\partial\Omega)\subset \partial\mathbb{B}^N
\quad\text{transversally, and}\quad
F\bigl(U\setminus\overline{\Omega}\bigr)\subset V\setminus\overline{\mathbb{B}^N}.
\]
Then the logarithmic coefficient in the asymptotic expansion of the Fefferman-Szeg\H{o} kernel vanishes to infinite order on $\partial\Omega$. 
For $n=2$, this implies that $\partial\Omega$ is locally spherical. 
In addition, if $\Omega\subset\mathbb{C}^2$ is simply connected, then $\Omega$ is biholomorphic to $\mathbb{B}^2$.
\end{theorem}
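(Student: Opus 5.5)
The strategy mirrors Palmieri's argument for the Bergman metric: turn the Kähler immersion into an identity of Kähler potentials, and then compare boundary asymptotics.

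\textbf{Step 1: potential identity.} On $\mathbb{B}^N$ the Fefferman–Szeg\H{o} kernel is a constant multiple of $(1-|w|^2)^{-N}$. Hence $g_{\operatorname{FS}}^{\mathbb B^N}=N\,i\partial\bar\partial(-\log(1-|w|^2))$, up to the normalization used in the definition of the metric. Write $S_\Omega(z)$ for the Fefferman–Szeg\H{o} kernel on the diagonal. The immersion hypothesis $f^*g_{\operatorname{FS}}^{\mathbb B^N}=\lambda g_{\operatorname{FS}}^\Omega$ gives
\[
i\partial\bar\partial\Bigl(\lambda\log S_\Omega-N\log\bigl(1-|f|^2\bigr)^{-1}\Bigr)=0 .
\]
So the bracketed function is pluriharmonic, and equals $2\operatorname{Re}h$ locally. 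I would use simple connectivity of small neighbourhoods of boundary points, and after polarization the global diastasis/Calabi argument, to conclude
\[
S_\Omega(z)=|e^{h(z)/\lambda}|^2\,\bigl(1-|f(z)|^2\bigr)^{-N/\lambda},
\]
at least locally near each boundary point. Alternatively, one can work only with the local form, since the conclusion we need is local.

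\textbf{Step 2: boundary asymptotics.} Since $F$ is smooth up to the boundary, maps $\partial\Omega$ into $\partial\mathbb{B}^N$ transversally, and sends the exterior to the exterior, the function $r:=|F|^2-1$ is a smooth defining function for $\Omega$ near $\partial\Omega$ with $dr\ne0$ there. We have $N/\lambda=n+k$ with $k\in\mathbb N$. Boutet de Monvel–Sjöstrand give
\[
S_\Omega=\phi\,(-r)^{-n}+\psi\log(-r),
\]
with $\phi,\psi$ smooth on $\overline\Omega$. By Step 1, $S_\Omega=|e^{h/\lambda}|^2(-r)^{-n-k}$.

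The key point is to show that $h$ extends smoothly to $\partial\Omega$, locally. Two facts are available: $\log S_\Omega+n\log(-r)$ is bounded (Fefferman–Szeg\H{o} asymptotics), and $\operatorname{Re}h=\lambda\bigl(\log S_\Omega+(n+k)\log(-r)\bigr)$. Together these give
\[
\operatorname{Re}h=\lambda k\log(-r)+O(1)\to-\infty .
\]
Now $-\operatorname{Re} h$ is pluriharmonic near the boundary and tends to $+\infty$ like $-\lambda k\log(-r)$. Such a function cannot exist when $k\ge1$: a pluriharmonic function restricted to complex curves transverse to the boundary would be harmonic on a disc with a logarithmic singularity along an arc. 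This is impossible, because a positive harmonic function on a half-disc grows at most like $1/\text{dist}$ near a boundary arc but cannot be unbounded along a whole arc with only logarithmic growth while remaining harmonic up to interior points. More rigorously, $\operatorname{Re}h\le C$ near the boundary while $\operatorname{Re}h\to-\infty$ on an open piece of $\partial\Omega$; a Hopf/maximum-principle argument then gives a contradiction. So $k=0$, i.e.\ $N=n\lambda$, and $\operatorname{Re}h$ is bounded.

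Bounded pluriharmonic functions with $\operatorname{Re}h=\lambda\log(\phi+\psi(-r)^n\log(-r))$ are $C^{n-1}$ up to the boundary. Hence, by elliptic regularity of the boundary trace, and iterating with the smoothness of $\phi$, the term $\psi(-r)^n\log(-r)$ must in fact be smooth. Comparing with the form of the expansion, $\psi$ vanishes to infinite order on $\partial\Omega$. This regularity bootstrap is the step I expect to be the main obstacle. I would carry it out by noting that $\operatorname{Re}h|_{\partial\Omega}=\lambda\log\phi|_{\partial\Omega}$ is smooth, and then using that pluriharmonic functions are determined by CR-pluriharmonic boundary data. Their Poisson-type extension is smooth, so $e^{\operatorname{Re}h/\lambda}$ is $C^\infty$ up to $\partial\Omega$. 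Then
\[
\psi\log(-r)=(-r)^{-n}\bigl(e^{2\operatorname{Re}h/\lambda}-\phi\bigr)
\]
is meromorphic-smooth, which forces $\psi=O(r^\infty)$.

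\textbf{Step 3: conclusions for $n=2$.} With $\psi$ vanishing to infinite order, Conjecture~\ref{conj:szego} is known for $n=2$ (Boutet de Monvel; see also Ebenfelt \cite{E18}), so $\partial\Omega$ is locally spherical. If $\Omega\subset\mathbb C^2$ is simply connected and strongly pseudoconvex with locally spherical boundary, the classical result (Chern–Ji, or Nemirovski–Shafikov \cite{ns06}) gives $\Omega\cong\mathbb B^2$.
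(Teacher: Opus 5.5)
Your argument is correct. Its skeleton matches the paper's: the potential identity, then comparison with the Boutet de Monvel--Sj\"ostrand expansion in the defining function $r=|F|^2-1$, then infinite-order vanishing of $\psi$, then the $n=2$ citations. The middle step, however, runs differently.

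\textbf{What the paper does.} It uses the global diastasis identity $S_\Omega(z)=\frac{|S_\Omega(z,0)|^2}{S_\Omega(0)}\rho_{\mathbb B^N}(f(z))^{-N/\lambda}$, so your factor $e^{h/\lambda}$ is explicitly $S_\Omega(\cdot,0)/\sqrt{S_\Omega(0)}$. It then quotes smoothness of $S_\Omega(\cdot,0)$ on $\overline\Omega$ (off-diagonal Boutet de Monvel--Sj\"ostrand regularity). Finally it applies Fu--Wong's Lemma 2.2 to $\frac{|S_\Omega(z,0)|^2}{S_\Omega(0)}-\phi\rho^{k}-\rho^{n+k}\psi\log\rho=0$, treating all $k\in\mathbb N$ at once.

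\textbf{What you do instead.} You rule out $k\ge 1$, and you get boundary smoothness of $\operatorname{Re}h$ from Dirichlet regularity for the Laplacian, using only the diagonal expansion. Concretely, $\operatorname{Re}h$ is harmonic, $C^{n-1}$ up to $\partial\Omega$, and has smooth trace $\frac{\lambda}{2}\log\phi|_{\partial\Omega}$, so it is smooth up to the boundary. No CR-pluriharmonic extension theory or ``iteration'' is needed. Your last step is exactly the Fu--Wong lemma, which you should cite rather than describe as ``meromorphic-smooth''.

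\textbf{Your observation about $k$ is a genuine gain.} For $k\ge 1$ the paper's own identity forces the holomorphic function $S_\Omega(\cdot,0)$ to tend to $0$ on $\partial\Omega$. By the maximum principle it then vanishes identically, contradicting $S_\Omega(0,0)>0$. The same argument applied to $1/S_\Omega(\cdot,0)$ (zero-free by the identity) excludes $N/\lambda<n$. So such an immersion always forces $N=n\lambda$, and the integrality hypothesis is superfluous.

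\textbf{Two places need tightening.} First, this global maximum-principle argument is how you should justify $k=0$. Your growth heuristic for positive harmonic functions is not a proof. A purely local maximum principle gives nothing unless you compare $\operatorname{Re}h$ with $C-(C+M)\omega$, where $\omega$ is the harmonic measure of the boundary arc in a transverse complex slice, and let $M\to\infty$. Second, $S_\Omega=|e^{h/\lambda}|^2(-r)^{-n-k}$ gives $2\operatorname{Re}h/\lambda=\log S_\Omega+(n+k)\log(-r)$, so a harmless factor $2$ is missing in your formulas.
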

\begin{remark}
   Since $\Omega$ is bounded, one can always construct a holomorphic immersion of $(\Omega,g_{\operatorname{FS}}^\Omega)$ into the complex hyperbolic space $F(N,b)$ (see Section \ref{kahler immersion} for definition) for $N\ge n$ and $b<0$. 
Hence the hypothesis of Theorem~\ref{kah_imm_ball} is natural.
\end{remark}
Note that under the assumptions of Theorem \ref{kah_imm_ball}, the hypothesis of Conjecture \ref{conj:szego} is satisfied. Hence, if Conjecture \ref{conj:szego} holds, then $\Omega$ is biholomorphic to $\mathbb{B}^n$ in any dimension.

\subsection*{Organization of the paper}
In Section~2, we recall the necessary background for the paper. In Section~3, we study the $L^2$-Dolbeault cohomology of the Fefferman--Szeg\H o metric and prove Theorem~\ref{l2-co}. Section~4 is devoted to proving the rigidity results in Theorem~\ref{main:rigidity} for the Fefferman--Szeg\H o metric. In Section~5, we establish a Ramadanov-type criterion (Theorem~\ref{gen_rama_conj}) in terms of the Fefferman--Szeg\H{o} invariant function. Finally, in Section~6, we study K"ahler immersions into finite-dimensional balls and prove Theorem~\ref{kah_imm_ball}.

\subsection*{Notations}
\begin{itemize}
    \item For two nonnegative functions \(f\) and \(g\), we write \(f\approx g\) if there exists a constant \(C>1\) such that
    \[
    C^{-1}g\leq f\leq Cg.
    \]

    \item We use \(O(\cdot)\) and \(o(\cdot)\) in the usual asymptotic sense as \(z\to\partial\Omega\), where the region in which \(z\) varies will be clear from the context. Thus \(f=O(g)\) means that there exists a constant \(C>0\) such that
    \[
    |f(z)|\leq C|g(z)|
    \]
    near \(\partial\Omega\), while \(f=o(g)\) means that
    \[
    \frac{f(z)}{g(z)}\to 0
    \quad \text{as } z\to\partial\Omega.
    \]
\end{itemize}
\section{Preliminaries}
Let $\Omega = \{z\in\mathbb{C}^n : \rho(z) < 0\} \subset \mathbb{C}^n$ be a $C^2$-smooth bounded strongly pseudoconvex domain, and let $\rho$ be a $C^2$-smooth strongly plurisubharmonic defining function.

The \textit{Fefferman surface area measure} $\sigma_{\operatorname{F}}$ on the boundary $\partial\Omega$ is defined by  
\[
d\sigma_{\operatorname{F}}^{\partial\Omega} \wedge d\rho = J[\rho]^{\frac{1}{n+1}}\, dV,
\qquad\text{or equivalently}\qquad
d\sigma_{\operatorname{F}}^{\partial\Omega} =  \frac{J[\rho]^{\frac{1}{n+1}}}{\|d\rho\|}\, d\sigma_{\operatorname{E}},
\]
where $\sigma_{\operatorname{E}}$ is the usual Euclidean surface area measure, $\|d\rho\|$ is the Euclidean norm of the gradient $\nabla\rho$ of $\rho$, and  
\[
J[\rho] = (-1)^n \det\begin{bmatrix}
\rho & \partial_{\bar j}\rho\\[2pt]
\partial_i\rho & \partial_{i\bar j}\rho
\end{bmatrix}_{ i,j=1
}^n
\]
where $\partial_i=\partial/\partial z_i,\; \partial_{\bar j}=\partial/\partial\bar z_j,\;
\partial_{i\bar j}=\partial_i\partial_{\bar j}$. 

The measure $\sigma_{\operatorname{F}}$ is independent of the choice of $\rho$ and satisfies $d\sigma_{\operatorname{F}} \approx d\sigma_{\operatorname{E}}$. 

\subsection*{(T)-biholomorphisms}
A biholomorphism $F:\Omega_1\to\Omega_2$ between two such domains is called an \textit{(T)-biholomorphism} if  
\begin{itemize}
    \item[(i)] $F$ extends to a $C^2$-diffeomorphism $\overline{\Omega}_1\to\overline{\Omega}_2$, and  
    \item[(ii)] $(\det J_{\mathbb C}F)^{\frac{n}{n+1}}$ admits a globally defined holomorphic branch on $\Omega_1$.
\end{itemize}
Under such a map, the Fefferman surface area measure transforms as  
\begin{equation}\label{pullback:feffer_surf_meas}
   F^*\big(d\sigma_{\operatorname{F}}^{\partial\Omega_2}\big) = |\det J_{\mathbb{C}}F(z)|^{\frac{2n}{n+1}}\, d\sigma_{\operatorname{F}}^{\partial\Omega_1}
\end{equation}

(see \cite[Proposition~1]{bl14} for further details).
\subsection*{Fefferman-Szeg\H o kernel}
Let $L^2_{\operatorname{F}}(\partial\Omega)$ be the space of square‑integrable functions with respect to $d\sigma_{\operatorname{F}}$, and set $A(\Omega)=\mathcal{O}(\Omega)\cap C(\overline{\Omega})$. The \textit{Fefferman–Hardy space} $\mathrm{H}^2(\partial\Omega)$ is the closure of $A(\Omega)$ in $L^2_{\operatorname{F}}(\partial\Omega)$. Since $d\sigma_{\operatorname{F}}\approx d\sigma_{\operatorname{E}}$, $\mathrm{H}^2(\partial\Omega)$ coincides with the classical Hardy space as a set, but its Hilbert space structure differs. The space $\mathrm H^2(\partial\Omega)$ is a reproducing kernel Hilbert space. 
Its reproducing kernel $S_\Omega(z,w)$ is called the Fefferman-Szeg\H{o} kernel and is characterized as follows: for each $z\in\Omega$, $S_\Omega(\cdot,z)\in \mathrm{H}^2(\partial\Omega)$; for all $z,w\in\Om$, $S_\Omega(z,w)=\overline{S_\Omega(w,z)}$; for every $f\in \mathrm{H}^2(\partial\Omega)$ and $z\in\Om$, \[\displaystyle f(z)=\int_{\partial\Omega} f(w)S_\Omega(z,w)\,d\sigma_{\operatorname{F}}(w).\]

If $\{\phi_j\}$ is any complete orthonormal basis of $\mathrm{H}^2(\partial\Omega)$, then  
\begin{equation}\label{ser-exp-sgoker}
    S_\Omega(z,w)=\sum_{j=1}^\infty \phi_j(z)\overline{\phi_j(w)},
\end{equation}
where the series converges uniformly on compact subsets of $\Omega\times\Omega$.

\begin{proposition}\label{trans:feff_sgo_kers}
    Let $F:\Omega_1\to\Omega_2$ be an (T)-biholomorphism. Then for all $z,w\in\Om_1$, 
\[
S_{\Omega_1}(z,w)
=S_{\Omega_2}(F(z),F(w))
\det J_{\mathbb{C}}F(z)^{\frac{n}{n+1}}
\overline{\det J_{\mathbb{C}}F(w)}^{\frac{n}{n+1}}.
\]
\end{proposition}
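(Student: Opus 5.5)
The plan is to show that composition with $F$, twisted by the holomorphic branch $h(z):=(\det J_{\mathbb C}F(z))^{\frac{n}{n+1}}$ from condition (ii), is a unitary operator between the Fefferman--Hardy spaces. The transformation law then follows from uniqueness of reproducing kernels (equivalently, from the series expansion \eqref{ser-exp-sgoker}). Define
\[
T:\mathrm H^2(\partial\Omega_2)\to \mathrm H^2(\partial\Omega_1),\qquad (Tf)(z)=f(F(z))\,h(z).
\]

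\textbf{Step 1: $T$ is a well-defined isometry.} Since $F$ extends to a $C^2$-diffeomorphism of the closures, $\det J_{\mathbb C}F$ extends continuously and without zeros to $\overline{\Omega}_1$. Hence $h$ extends continuously to $\overline\Omega_1$, with $|h|^2=|\det J_{\mathbb C}F|^{\frac{2n}{n+1}}$ on $\partial\Omega_1$. One point needs checking: $h$ is given only on $\Omega_1$, but it is uniformly continuous near the boundary. This is because locally it is a fixed power branch of a nonvanishing continuous function, so its boundary values are well defined.

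For $f\in A(\Omega_2)$ we then have $Tf\in A(\Omega_1)$. By \eqref{pullback:feffer_surf_meas},
\[
\int_{\partial\Omega_1}|f\circ F|^2|h|^2\,d\sigma_{\operatorname F}^{\partial\Omega_1}=\int_{\partial\Omega_1}|f\circ F|^2\,F^*d\sigma_{\operatorname F}^{\partial\Omega_2}=\int_{\partial\Omega_2}|f|^2\,d\sigma_{\operatorname F}^{\partial\Omega_2}.
\]
So $T$ is isometric on the dense subspace $A(\Omega_2)$ and extends to an isometry on $\mathrm H^2(\partial\Omega_2)$. The extension still acts by the pointwise formula on $\Omega_1$, since point evaluations are continuous on both spaces.

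\textbf{Step 2: $T$ is surjective.} The map $F^{-1}$ is also a (T)-biholomorphism. Indeed, $1/h\circ F^{-1}$ is a holomorphic branch of $(\det J_{\mathbb C}F^{-1})^{\frac{n}{n+1}}$, because $\det J_{\mathbb C}F^{-1}(\zeta)=1/\det J_{\mathbb C}F(F^{-1}\zeta)$. Using this branch, the analogous operator $T'g=(g\circ F^{-1})\cdot(1/h\circ F^{-1})$ is an isometry by Step 1, and $TT'=\mathrm{id}$, $T'T=\mathrm{id}$. Hence $T$ is unitary.

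\textbf{Step 3: the kernel identity.} Take an orthonormal basis $\{\psi_j\}$ of $\mathrm H^2(\partial\Omega_2)$. Then $\{T\psi_j\}=\{(\psi_j\circ F)h\}$ is an orthonormal basis of $\mathrm H^2(\partial\Omega_1)$. Applying \eqref{ser-exp-sgoker} on both domains gives
\[
S_{\Omega_1}(z,w)=\sum_j \psi_j(F(z))h(z)\overline{\psi_j(F(w))h(w)}=S_{\Omega_2}(F(z),F(w))\,h(z)\overline{h(w)},
\]
which is the claim with the chosen branch. The right-hand side does not depend on which global branch is chosen, since two branches differ by a constant root of unity $\omega$ and $|\omega|^2=1$.

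The main obstacle is Step 1, specifically the boundary behaviour. One must justify that $h$ has continuous boundary values, so that $Tf\in A(\Omega_1)$, and that the pullback formula \eqref{pullback:feffer_surf_meas} applies with exactly $|h|^2$ as density. Both follow from the $C^2$-extension of $F$ up to the boundary, and everything else is formal Hilbert space theory.
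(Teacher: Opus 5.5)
Your proposal is correct and follows the paper's proof essentially step for step. Both build the twisted composition operator, show it is isometric on $A(\Omega_2)$ via the Fefferman measure transformation law, get surjectivity from the fact that $F^{-1}$ is again a (T)-biholomorphism, and read off the kernel identity from the orthonormal-basis expansion; you also supply details the paper leaves implicit, namely the boundary continuity of the branch, the pointwise formula for the extended operator, the explicit branch $1/h\circ F^{-1}$, and branch-independence of the final identity.
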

\begin{proof}
    For $i=1,2$, let $\mathcal{D}_i=A(\Omega_i)$. Define $U:\mathcal{D}_2\to\mathcal{D}_1$ by
\[
(Uf)(z)=f(F(z))\;\det J_{\mbb C}F(z)^{\frac{n}{n+1}},\qquad z\in\Omega_1.
\]
For $f\in\mathcal{D}_2$, using (\ref{pullback:feffer_surf_meas}) and the change of variables $\eta=F(\zeta)$,
\[
\|Uf\|_{L_{\operatorname{F}}^2(\partial\Omega_1)}^2
= \int_{\partial\Omega_1}|f(F(\zeta))|^2\,|\det J_{\mbb C}F(\zeta)|^{\frac{2n}{n+1}}\,d\sigma_{\operatorname{F}}(\zeta)
= \int_{\partial\Omega_2}|f(\eta)|^2\,d\sigma_{\operatorname{F}}(\eta)
= \|f\|_{L_{\operatorname{F}}^2(\partial\Omega_2)}^2.
\]
Hence $U$ is an isometry. Because $F^{-1}$ also satisfies Condition~(T), $U$ maps $\mathcal{D}_2$ onto $\mathcal{D}_1$. Consequently, $U$ extends uniquely to a unitary operator
\[
U: \mathrm{H}^2(\partial\Omega_2,d\sigma_{\operatorname{F}}^{\pa\Om_2})\longrightarrow \mathrm{H}^2(\partial\Omega_1,d\sigma_{\operatorname{F}}^{\pa\Om_1}).
\]
Let $\{\psi_j\}_{j\ge1}$ be a complete orthonormal basis of $\mathrm{H}^2(\partial\Omega_2,d\sigma_{\operatorname{F}})$. Then $\{U\psi_j\}_{j\ge1}$ is a complete orthonormal basis of $\mathrm{H}^2(\partial\Omega_1,d\sigma_{\operatorname{F}})$. Using (\ref{ser-exp-sgoker}), we have
\[
S_{\Omega_1}(z,w)=\sum_{j=1}^\infty (U\psi_j)(z)\;\overline{(U\psi_j)(w)},\qquad z,w\in\Omega_1.
\]
Substituting $(U\psi_j)(z)=\psi_j(F(z))\,(\det J_{\mbb C}F(z))^{n/(n+1)}$ gives
\begin{multline*}
    S_{\Omega_1}(z,w)=\det J_{\mbb C}F(z)^{\frac{n}{n+1}}\;
\overline{\det J_{\mbb C}F(w)}^{\frac{n}{n+1}}\;
\sum_{j=1}^\infty \psi_j(F(z))\overline{\psi_j(F(w))}\\
=\det J_{\mbb C}F(z)^{\frac{n}{n+1}}\;
\overline{\det J_{\mbb C}F(w)}^{\frac{n}{n+1}}S_{\Omega_2}(F(z),F(w)),
\end{multline*}
which completes the proof.
\end{proof}
\subsection*{Fefferman–Szeg\H{o} metric}
Let $S_\Om(z):=S_\Om(z,z)$ denote the restriction of the Fefferman-Szeg\H{o} kernel to the diagonal. It can be seen that $\log S_\Om(z)$ is a $C^\infty$-smooth strongly plurisubharmonic function on $\Om$, which induces the K\"ahler metric defined as
\[
g_{\operatorname{FS}}^\Omega = \sum_{i,j=1}^n g_{i\bar j}^\Omega(z)\, dz^i d\bar z^j,
\qquad 
g_{i\bar j}^\Omega(z)=\frac{\partial^2}{\partial z_i\partial\bar z_j}\log S_\Omega(z),
\]
called the \textit{Fefferman–Szeg\H o metric}. By Proposition \ref{trans:feff_sgo_kers}, this metric is invariant under (T)-biholomorphisms, i.e., for all $z\in\Om,~~X\in\mbb C^n$,  \[g^{\Om_1}_{\operatorname{FS},z}\big(X,X\big)=g^{\Om_2}_{\operatorname{FS},F(z)}\big(J_{\mbb C}F(z)X,J_{\mbb C}F(z)X\big).\] By \cite{bl14,y25}, the Fefferman--Szeg\H{o} metric dominates the Carath\'eodory metric. Consequently, $(\Om,g_{\operatorname{FS}}^\Om)$ is a complete K\"ahler manifold. 

Let $G_\Omega(z)=\big[g_{i\bar j}^\Omega(z)\big]_{ i,j=1
}^n$ be the matrix induced from $g^\Om_{FS}$ and $G_\Omega^{-1}=\big[g_\Omega^{i\bar j}(z)\big]_{ i,j=1
}^n$ its inverse. The \textit{Fefferman–Szeg\H o invariant} is defined by
\[
\mathfrak{R}_\Omega(z)=\frac{\det G_\Omega(z)}{S_\Omega(z)^{\frac{n+1}{n}}}.
\]
By Proposition~\ref{trans:feff_sgo_kers}, $\mathfrak R_\Omega$ is invariant under $(T)$-biholomorphisms, i.e., $\mathfrak{R}_{\Om_2}\big(\Phi(z)\big)=\mathfrak{R}_{\Om_1}(z).$

The Ricci curvature of $g^\Om_{FS}$ is given by
\[
\operatorname{Ric}(g_{\operatorname{FS}}^\Omega)=\operatorname{Ric}_{i\bar j}^\Omega\, dz^i d\bar z^j,\qquad
\operatorname{Ric}_{i\bar j}^\Omega=-\partial_{i\bar j}\log\det G_\Omega(z).
\]
Here and henceforth, we use the Einstein summation convention. 

The metric $g_{\operatorname{FS}}^\Omega$ is said to be \textit{K\"ahler–Einstein} if there exists $\la\in\mathbb R$ such that \[\operatorname{Ric}(g_{\operatorname{FS}}^\Omega)=\lambda g_{\operatorname{FS}}^\Omega.\] When $\pa\Om$ is $C^\infty$-smooth, one has $\lambda=-(n+1)/n$ (see \cite{b26}). Moreover, if $\mathfrak{R}_\Omega\equiv C_n$ then $g_{\operatorname{FS}}^\Omega$ is K\"ahler–Einstein (see \cite{y25}) and $C_n=\left((n-1)!\right)^{-\frac{n+1}{n}} n^n \pi^{n+1}$ (see \cite{b26,y25}).

The scalar curvature of $g^\Om_{FS}$ is defined by
\[\mathrm{Sca}(g_{\operatorname{FS}}^\Omega)=\operatorname{tr}_{g_{\operatorname{FS}}^\Om} \left(\operatorname{Ric}(g_{\operatorname{FS}}^\Om)\right)=g_\Omega^{i\bar j}\operatorname{Ric}_{i\bar j}^\Omega.\]
For a $C^\infty$ function $f:\Omega\to\mathbb{R}$, the gradient and Laplacian with respect to $g^\Om_{FS}$ are  given by 
\[
\nabla f = g_\Omega^{i\bar j}\big(\partial_i f\,\partial_{\bar j}+\partial_{\bar j}f\,\partial_i\big),\qquad
\Delta_{g_{\operatorname{FS}}^\Omega}f = g_\Omega^{i\bar j}\,\partial_{i\bar j}f.
\]
Let $\nabla$ denote the Levi-Civita connection of $g_{\operatorname{FS}}^\Om$ and the covariant derivatives are wriiten as $\nabla_i = \nabla_{\partial_i}\text{ and } \nabla_{\bar{j}}= \nabla_{\partial_{\bar{j}}}.$ Since the mixed Christoffel symbols vanish for K\"ahler metrics, we have $\nabla_i\nabla_{\bar j}f=\partial_{i\bar j}f$.

The metric $g_{\operatorname{FS}}^\Omega$ is said to be a K\"ahler-Ricci soliton if there exists a real holomorphic vector field $X$ and $\la\in\mathbb R$ such that 
\[\operatorname{Ric}(g_{\operatorname{FS}}^\Omega)+\mathcal{L}_Xg^\Om_{\operatorname{FS}}=\la g^\Om_{\operatorname{FS}}.\] A vector field $X=X^{1,0}+X^{0,1}$ is called real if $X^{0,1}=\overline{X^{1,0}}$ and it is holomorphic if its $(1,0)$-part $X^{1,0}:=(X-\sqrt{-1}JX)/2$ is holomorphic with respect to the standard complex structure $J$ of $\mathbb C^n$. If there exists a $C^\infty$ real-valued function $f$ such that $X=\frac{1}{2}\nabla f$, then $g^\Om_{\operatorname{FS}}$ is called a \textit{gradient K\"ahler-Ricci soliton} with potential $f$. 
 Moreover, the condition that $\nabla f$ be a real holomorphic vector field is equivalent to $\nabla_i \nabla_j f=0$ for all $1\leq i,j \leq n$. 
 
 For each $z\in\Omega$, the pointwise norm of the $1$-form $\eta$ with respect to
$g_{\operatorname{FS}}^\Omega$ is defined by
\[
|\eta|_{g_{\operatorname{FS},z}^\Omega}
:=
\sup\left\{
|\eta_z(X)|:\, X\in\mathbb C^n,\ 
g_{\operatorname{FS},z}^\Omega(X,X)\leq 1
\right\}.
\]
We say that $\eta$ is bounded in the supremum norm if
\[
\|\eta\|_{g_{\operatorname{FS}}^\Omega}
:=
\sup_{z\in\Omega}|\eta|_{g_{\operatorname{FS},z}^\Omega}<\infty.
\]


\section{\texorpdfstring{$L^2$-cohomology of the Fefferman-Szeg\H o metric}{L2-cohomology of the Fefferman-Szego metric}}
Throughout this section, let $\Om\subset\mathbb C^n$
be a bounded strongly pseudoconvex domain with
 $C^\infty$-smooth boundary, where $n\geq 2$, and let $\rho$
be a $C^\infty$-smooth strongly plurisubharmonic defining function for $\Om$. Let $\Lambda_2^k(\Omega)$ denote the space of $k$-forms on $\Omega$ that are square-integrable with respect to the Fefferman-Szeg\H o metric $g_{\operatorname{FS}}$. 
 The $L^2$-cohomology groups of the complex
\[
\La_2^0(\Om) \xrightarrow{d_0} \La_2^1(\Om) \xrightarrow{d_1} \La_2^2(\Om) \xrightarrow{d_2}
\cdots \xrightarrow{d_{2n-1}}\La_{2}^{2n}(\Omega)\xrightarrow{d_{2n}}0\]
are defined by
\[
H_{2}^k(\Omega) := \ker d_k \, / \, \overline{\mathrm{Im}(d_{k-1})},
\]
where the closure is taken with respect to the $L^2$-norm induced by $g_{\operatorname{FS}}$. Since $g_{\operatorname{FS}}$ is complete, every $L^2$-cohomology class has a unique harmonic representative. Hence
\[
H_2^k(\Omega) \cong \mathcal{H}_2^k(\Omega),
\]
where $\mathcal{H}_2^k(\Omega)$ denotes the space of square‑integrable harmonic $k$-forms. Consequently, for each bidegree $(p,q)$ we have
\[
H_2^{p,q}(\Omega) \cong \mathcal{H}_2^{p,q}(\Omega),
\]
and the space of harmonic forms decomposes as
\[
\mathcal{H}_2^k(\Omega) = \bigoplus_{p+q=k} \mathcal{H}_2^{p,q}(\Omega).
\]
The goal of this section is to prove Theorem~\ref{l2-co}, i.e.,
\[
\dim H_2^{p,q}(\Omega)=0 \;\text{ if }\; p+q\neq n,\qquad
\dim H_2^{p,q}(\Omega)=\infty \;\text{ if }\; p+q=n.
\] 
\subsection*{Vanishing outside the middle dimension}
We begin with the following vanishing theorem of Donnelly \cite{d94} concerning the $L^2$-cohomology outside the middle dimension.

\begin{theorem}\label{donelly}
Let $(M,ds^2)$ be a complete K\"ahler manifold of complex dimension $n$ with K\"ahler form $\omega$. If $\omega = d\eta$ with $\eta$ bounded in supremum norm, then
\[
\mathcal{H}_2^k(M) = 0 \quad \text{for all } k \neq n.
\]
\end{theorem}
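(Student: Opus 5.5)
The plan is Gromov's Kähler-hyperbolicity argument: use the Lefschetz operator to move a low-degree harmonic form into a form that is exact with an $L^2$ primitive, and then show such a form must vanish. Let $\alpha\in\mathcal H_2^k(M)$. By completeness (Gaffney's theorem, or Andreotti–Vesentini), an $L^2$ form with $\Delta\alpha=0$ satisfies $d\alpha=0$ and $d^*\alpha=0$.

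\textbf{Reduction to $k<n$.} The Hodge star $*$ is a pointwise isometry. It commutes with $\Delta$ up to sign and maps $\mathcal H_2^k(M)$ onto $\mathcal H_2^{2n-k}(M)$. It therefore suffices to treat $k<n$.

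\textbf{Lefschetz step.} Let $L=\omega\wedge\cdot$. On a Kähler manifold the Kähler identities give $[L,\Delta]=0$. By pointwise hard Lefschetz, $L^{n-k}\colon\Lambda^k\to\Lambda^{2n-k}$ is an isomorphism at each point. Since it is built from the metric at that point, in an orthonormal frame it is a fixed linear map, so there is $c=c(n,k)>0$ with
\[
|L^{n-k}\alpha|\ \ge\ c\,|\alpha| \quad\text{pointwise}.
\]
Also $|L\beta|\le C|\beta|$, so $\gamma:=\omega^{n-k}\wedge\alpha$ lies in $L^2$. It satisfies $\Delta\gamma=0$, hence $d\gamma=0$ and $d^*\gamma=0$ as above.

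\textbf{Primitive.} Since $\omega=d\eta$, $d\omega=0$ and $d\alpha=0$, we have
\[
\gamma=d\beta,\qquad \beta:=\eta\wedge\omega^{n-k-1}\wedge\alpha .
\]
Because $\sup_M|\eta|<\infty$, we get $|\beta|\le C\|\eta\|_\infty|\alpha|$, so $\beta\in L^2$.

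\textbf{Vanishing.} Let $\chi_R$ be cutoff functions with $0\le\chi_R\le1$, $\chi_R=1$ on $B(x_0,R)$, support in $B(x_0,2R)$ and $|d\chi_R|\le 2/R$. These exist because $M$ is complete. Then
\[
\|\gamma\|^2=\lim_{R\to\infty}\langle \chi_R\gamma, d\beta\rangle
=\lim_{R\to\infty}\Bigl(\langle \chi_R\, d^*\gamma,\beta\rangle-\langle \iota_{\nabla\chi_R}\gamma,\beta\rangle\Bigr).
\]
The first term is zero since $d^*\gamma=0$. The second is bounded by $\tfrac{2}{R}\|\gamma\|\,\|\beta\|\to0$. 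Hence $\gamma=0$, and the Lefschetz lower bound gives $\alpha=0$. This proves $\mathcal H_2^k(M)=0$ for $k\ne n$.

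\textbf{Main obstacle.} The delicate points are the analytic ones. One is passing from $\Delta\alpha=0$ in $L^2$ to $d\alpha=0$ and $d^*\alpha=0$. The other is the integration by parts against an $L^2$ primitive that has no compact support. Both rely on completeness through the cutoff argument above. The algebraic inputs (Kähler identities and the pointwise Lefschetz bound with a uniform constant) are standard.
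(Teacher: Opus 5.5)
Your proof is correct. The paper does not prove this statement: it quotes it from Donnelly \cite{d94} and uses it as a black box with $\eta=\Theta$. So your argument, which is Gromov's K\"ahler hyperbolicity proof via $L^{n-k}$, adds a self-contained proof where the paper only cites one.

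The steps you flag all check out:
\begin{itemize}
\item $L^2$ harmonic forms on a complete manifold are closed and coclosed.
\item $[L,\Delta]=0$ makes $\gamma=\omega^{n-k}\wedge\alpha$ an $L^2$ harmonic form.
\item $\beta=\eta\wedge\omega^{n-k-1}\wedge\alpha$ is an $L^2$ primitive of $\gamma$.
\item The identity $d^*(\chi_R\gamma)=\chi_R\, d^*\gamma-\iota_{\nabla\chi_R}\gamma$ forces $\|\gamma\|=0$.
\item The uniform pointwise Lefschetz constant, together with the Hodge star reduction, finishes the argument.
\end{itemize}

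\textbf{A leaner variant.} You can avoid both hard Lefschetz with a uniform constant and the Hodge star. Use a single power of $L$ together with the algebraic identity $[L,L^*]=(k-n)\,\mathrm{Id}$ on $k$-forms.

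For $\alpha\in\mathcal H_2^k(M)$, the forms $L\alpha$ and $L^*\alpha$ are again $L^2$ harmonic, since $L$ and $L^*$ commute with $\Delta$ on a K\"ahler manifold. Hence they are closed and coclosed. This gives $L\alpha=d(\eta\wedge\alpha)$ and $LL^*\alpha=d(\eta\wedge L^*\alpha)$.

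Your cutoff argument then yields
\[
\|L\alpha\|^2=\langle d(\eta\wedge\alpha),L\alpha\rangle=0,
\qquad
\|L^*\alpha\|^2=\langle d(\eta\wedge L^*\alpha),\alpha\rangle=0 .
\]
Therefore $(k-n)\alpha=[L,L^*]\alpha=0$. This handles $k<n$ and $k>n$ at once.

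\textbf{A remark on the paper's application.} There $\Theta=-i\partial\log S$ is complex-valued. This is harmless. Your argument runs verbatim with Hermitian inner products. Alternatively, replace $\Theta$ by $\operatorname{Re}\Theta$, which is still bounded and satisfies $d(\operatorname{Re}\Theta)=\omega$ because $\omega$ is real.
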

To apply this to $g_{\operatorname{FS}}$, we set $\eta = \Theta$ where
\[
\Theta := -i \pa_i\log S(z)\,dz^i\qquad\text{so that}\qquad \omega = d\Theta,
\]
where we used
\(
\bar\partial\partial=-\partial\bar\partial.
\)
To apply Theorem~\ref{donelly}, we must verify that $\Theta$ is bounded with respect to $g_{\operatorname{FS}}$.

Let $\varepsilon(z) = -\rho(z)$ and define
\[
h(z) = S(z)\varepsilon(z)^n,\qquad \mathfrak h(z) = \log h(z),
\]
so that \(\log S(z) = -n\log\varepsilon(z) + \mathfrak h(z)\). A straightforward computation gives
\begin{align}
\Theta_z(X) &= -i\Bigl(\langle X,\bar\partial\mathfrak h(z)\rangle + n\,\frac{\langle X,\bar\partial\rho(z)\rangle}{\varepsilon(z)}\Bigr), \label{theta_expr}\\
g_{\operatorname{FS},z}(X,X) &= n\,\frac{|\langle X,\bar\partial\rho(z)\rangle|^2}{\varepsilon(z)^2}
+ \frac{n}{\varepsilon(z)}\,L_{\rho(z)}(X,X) + L_{\mathfrak h(z)}(X,X). \label{metric_expr}
\end{align}

Here, $\ov\pa f=(\pa_{\ov 1}f,\cdots, \pa_{\ov n}f)$ and $L_f$ denotes the Levi form of $f$. The following asymptotic estimates for $\mathfrak h$ follow from the asymptotic expansion of $S(z)$ (see \cite{b25}).

\begin{lemma}\label{asymp}
For \(n\ge2\), we have
\[
\partial_i\mathfrak h = O(1),\qquad
\partial_{i\bar j}\mathfrak h = 
\begin{cases}
O(\log\varepsilon^{-1}) & n=2,\\
O(1) & n\ge 3.
\end{cases}
\]
\end{lemma}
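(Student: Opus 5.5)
The plan is to read off the lemma from Fefferman's asymptotic expansion of the Szeg\H{o} kernel, transplanted to the Fefferman surface measure. Since $d\sigma_{\operatorname{F}}=\psi\,d\sigma_{\operatorname{E}}$ with $\psi=J[\rho]^{1/(n+1)}/\|d\rho\|$ smooth and positive on $\partial\Omega$, the kernel $S_\Omega$ is the Szeg\H{o} kernel for a smooth positive density. The Boutet de Monvel--Sj\"ostrand theory then gives the diagonal expansion
\[
S_\Omega(z)=\varphi(z)\,\varepsilon(z)^{-n}+\psi_0(z)\log\varepsilon(z),
\qquad \varphi,\psi_0\in C^\infty(\overline{\Omega}),\quad \varphi|_{\partial\Omega}>0 .
\]
This is the expansion cited from \cite{b25}. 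With $h=S\varepsilon^n$ as in the definition of $\mathfrak h$, it yields
\[
h=\varphi+\psi_0\,\varepsilon^n\log\varepsilon ,
\]
and $h$ is bounded below by a positive constant near $\partial\Omega$. Hence $\mathfrak h=\log h$ is well defined and smooth in $\Omega$.

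Next we differentiate. Set $u=\varepsilon^n\log\varepsilon$. Since $\varepsilon=-\rho$ is smooth, we get $\partial_i u=O(\varepsilon^{n-1}\log\varepsilon^{-1})$ and $\partial_{i\bar j}u=O(\varepsilon^{n-2}\log\varepsilon^{-1})$. The second bound comes from the term $n(n-1)\varepsilon^{n-2}\log\varepsilon\,\partial_i\varepsilon\,\partial_{\bar j}\varepsilon$. For $n=2$ the remaining terms in $\partial_{i\bar j}u$ are $3\,\partial_i\varepsilon\,\partial_{\bar j}\varepsilon+2\varepsilon\,\partial_{i\bar j}\varepsilon\log\varepsilon+\varepsilon\,\partial_{i\bar j}\varepsilon$, which are bounded. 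So $\partial h=O(1)$ for every $n\ge2$. For the second derivatives, $\partial_{i\bar j}h=O(1)$ when $n\ge3$, because then $\varepsilon^{n-2}\log\varepsilon^{-1}\to0$, while $\partial_{i\bar j}h=O(\log\varepsilon^{-1})$ when $n=2$.

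Finally we pass to $\mathfrak h=\log h$ using
\[
\partial_i\mathfrak h=\frac{\partial_i h}{h},\qquad
\partial_{i\bar j}\mathfrak h=\frac{\partial_{i\bar j}h}{h}-\frac{\partial_i h\,\partial_{\bar j}h}{h^2}.
\]
Because $h\ge c>0$ near the boundary, $\partial_i\mathfrak h=O(1)$, and $\partial_{i\bar j}\mathfrak h$ inherits the bound $O(\log\varepsilon^{-1})$ for $n=2$ and $O(1)$ for $n\ge3$. On compact subsets of $\Omega$ all these quantities are trivially bounded. The estimates are stated near $\partial\Omega$, so this completes the argument.

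The main obstacle is the first step: justifying that the Fefferman--Szeg\H{o} kernel, defined with the weighted measure $\sigma_{\operatorname{F}}$, has an expansion of exactly this form with smooth coefficients up to the boundary. In particular, one needs the log coefficient to enter as a smooth function times $\log\varepsilon$ with no singular power in front of it, and the leading coefficient to be positive. I would take this from Boutet de Monvel--Sj\"ostrand for Szeg\H{o} projectors with respect to smooth positive boundary densities, as recorded in \cite{b25}. The remaining derivative bookkeeping is routine.
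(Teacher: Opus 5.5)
Your proposal is correct and takes the paper's route. The paper simply asserts that the bounds follow from the Fefferman/Boutet de Monvel--Sj\"ostrand expansion $S=\varphi\varepsilon^{-n}+\psi_0\log\varepsilon$ with coefficients smooth up to the boundary; that expansion holds for any smooth positive boundary density, hence for $\sigma_{\operatorname{F}}$. Your differentiation of $h=\varphi+\psi_0\varepsilon^n\log\varepsilon$, together with the lower bound $h\ge c>0$, supplies the routine bookkeeping the paper leaves implicit.
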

Using the above estimates, we prove the following lemma.
\begin{lemma}\label{theta_bounded}
The $1$-form $\Theta$ is bounded in the supremum norm, i.e.,
\[
\|\Theta\|_{g_{\operatorname{FS}}}=\sup_{z\in\Omega}\;\sup_{X\in\mathbb{C}^n\setminus\{0\}}\frac{|\Theta_z(X)|^2}{g_{\operatorname{FS},z}(X,X)} < \infty.
\]  
\end{lemma}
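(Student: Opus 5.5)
The argument splits into a compact part and a neighbourhood of the boundary. On any compact subset $K\subset\Omega$ both $\Theta$ and $g_{\operatorname{FS}}$ are smooth, and $g_{\operatorname{FS}}$ is positive definite. Hence the quotient $|\Theta_z(X)|^2/g_{\operatorname{FS},z}(X,X)$ is bounded on the unit sphere bundle over $K$, and by homogeneity for all $X\neq0$. It therefore suffices to work on a collar $\{0<\varepsilon(z)<\delta\}$ with $\delta$ small, and to use \eqref{theta_expr}, \eqref{metric_expr} together with Lemma~\ref{asymp}.

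\textbf{Numerator.} From \eqref{theta_expr} and $(a+b)^2\le 2a^2+2b^2$,
\[
|\Theta_z(X)|^2\le 2|\langle X,\bar\partial\mathfrak h\rangle|^2+2n^2\frac{|\langle X,\bar\partial\rho\rangle|^2}{\varepsilon^2}.
\]
The second term is at most $2n$ times the first term of \eqref{metric_expr}, so it is harmless. Since $\partial_i\mathfrak h=O(1)$ by Lemma~\ref{asymp}, the first term is $\le C|X|^2$.

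\textbf{Denominator.} We need $g_{\operatorname{FS},z}(X,X)\ge c|X|^2$ near the boundary. Strong plurisubharmonicity of $\rho$ gives $L_\rho(X,X)\ge c_0|X|^2$ on $\overline\Omega$. Lemma~\ref{asymp} gives $|L_{\mathfrak h}(X,X)|\le C|X|^2$ for $n\ge3$, and $|L_{\mathfrak h}(X,X)|\le C\log(\varepsilon^{-1})|X|^2$ for $n=2$. In both cases $\varepsilon\,|L_{\mathfrak h}|=o(1)\,|X|^2$, since $\varepsilon\log\varepsilon^{-1}\to0$. So for $\delta$ small,
\[
\frac{n}{\varepsilon}L_\rho(X,X)+L_{\mathfrak h}(X,X)\ge \frac{n c_0}{2\varepsilon}|X|^2\ge \frac{nc_0}{2\delta}|X|^2 ,
\]
and the first term of \eqref{metric_expr} is nonnegative. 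This yields $g_{\operatorname{FS},z}(X,X)\ge \max\bigl(n|\langle X,\bar\partial\rho\rangle|^2\varepsilon^{-2},\ c|X|^2\bigr)$ up to constants, with a $c$ even growing like $\varepsilon^{-1}$.

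\textbf{Conclusion.} Combining the two bounds, the quotient is at most $2C/c+2n$ on the collar. Together with the compact part, this gives $\|\Theta\|_{g_{\operatorname{FS}}}<\infty$.

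The main obstacle is the $n=2$ case. There $\partial_{i\bar j}\mathfrak h$ may blow up logarithmically, so one must check that the $\varepsilon^{-1}L_\rho$ term still dominates $L_{\mathfrak h}$; as shown, it does. All other steps are routine, given the expansion-derived bounds in Lemma~\ref{asymp}.
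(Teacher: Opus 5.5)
Your argument is correct, and it takes a cleaner route than the paper's. The paper fixes $|X|=1$ and projects $z$ to its nearest boundary point to split $X$ into complex-tangential and normal parts. It then treats two regimes separately:
\begin{enumerate}
\item[(N)] When $\nu^2\ge C\tau$, it uses only $|L_\rho|\le M$ and shows $g_{\operatorname{FS}}\gtrsim\nu^2$.
\item[(T)] When $\nu^2<C\tau$, it uses that $X$ is almost complex-tangential to get $L_\rho(X,X)\gtrsim 1$, and hence $g_{\operatorname{FS}}\gtrsim\tau$.
\end{enumerate}

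You instead use that $\rho$ is strongly plurisubharmonic on all of $\overline\Omega$, so $L_\rho(X,X)\ge c_0|X|^2$ in every direction, not only nearly tangential ones. Once $\varepsilon\log\varepsilon^{-1}\to 0$ absorbs $L_{\mathfrak h}$, the right side of \eqref{metric_expr} splits into two nonnegative pieces. Hence $g_{\operatorname{FS}}$ dominates both $n|\langle X,\bar\partial\rho\rangle|^2\varepsilon^{-2}$ and $\tfrac{nc_0}{2\varepsilon}|X|^2$ simultaneously. The splitting $(a+b)^2\le 2a^2+2b^2$ then pairs each piece of $\Theta$ with one of these terms. This removes the nearest-point projection, the tangential/normal decomposition, the auxiliary constant $C>2M$ and the case distinction, and it gives the explicit bound $2C/c+2n$.

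What the paper's split buys is that it needs Levi positivity only on nearly complex-tangential vectors, so it would work with an arbitrary defining function. It is also set up to be reused for the two-sided comparison in Lemma \ref{comparison}. Neither advantage is essential here. First, $\Theta$ and $g_{\operatorname{FS}}$ do not depend on $\rho$, so choosing $\rho$ strongly plurisubharmonic costs nothing. Second, for $|X|=1$ your lower bound $g_{\operatorname{FS}}\ge\max\bigl(n\nu^2,\tfrac{nc_0}{2}\tau\bigr)$ combines with the trivial upper bound $g_{\operatorname{FS}}\le n\nu^2+(nM+1)\tau$ near $\partial\Omega$. That upper bound follows from $|L_\rho|\le M$ and Lemma \ref{asymp}. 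Together they give $g_{\operatorname{FS}}\approx\nu^2+\tau$ without any case split.
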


\begin{proof}
By the homogeneity of $\Theta_z(X)$ and $g_{\operatorname{FS},z}(X,X)$, we can fix $|X|=1$ throughout the proof. It suffices to consider points $z$ near $\partial\Omega$, because on compact subsets the ratio is clearly bounded. Now let $X \in \mathbb{C}^n$ be any. For each $p \in \partial\Omega$, the tangent space admits a canonical splitting $\mathbb{C}^n = H_p(\partial\Omega) \oplus N_p(\partial\Omega)$ into complex tangential and normal directions at $p$; therefore $X$ can be uniquely written as $X = X_T(p) + X_N(p)$ with $X_T(p) \in H_p(\partial\Omega)$ and $X_N(p) \in N_p(\partial\Omega)$. By the smoothness of $\partial\Omega$, for every point $z$ sufficiently close to $\pa\Om$ there exists a unique nearest boundary point $\pi(z) \in \partial\Omega$ satisfying $|z - \pi(z)| = \operatorname{dist}(z, \partial\Omega)$. Consequently, we  decompose $X$ relative to $\pi(z)$: $X = X_T(\pi(z)) + X_N(\pi(z))$. For brevity, we set $X_T(z) := X_T(\pi(z))$ and $X_N(z) := X_N(\pi(z))$, and refer to them as the tangential and normal components of $X$ at $z$, respectively.

If $\langle X,\bar\partial\rho(p)\rangle \neq 0$, then using Lemma \ref{asymp} in \eqref{theta_expr} and \eqref{metric_expr}, we have $|\Theta_z(X)|^2/g_{\operatorname{FS},z}(X,X)$ converges to $n$ as $z\to p$. 

If $\langle X,\bar\partial\rho(p)\rangle = 0$, then both $\varepsilon(z)^2|\Theta_z(X)|^2$ and $\varepsilon(z)^2g_{\operatorname{FS},z}(X,X)$ tend to zero as $z\to p$; therefore the elementary limit argument used for non-tangential directions fails. Thus, we examine the ratio for $z$ sufficiently close to $\partial\Omega$.

Using Lemma~\ref{asymp} in \eqref{theta_expr} and \eqref{metric_expr} we obtain
\begin{equation}\label{approx:norm_X_theta}
    |\Theta_z(X)|^2 = \frac{n^2\,|\langle X,\overline\partial\rho(z)\rangle|^2}{\varepsilon(z)^2} + O\!\left(\frac{1}{\varepsilon(z)}\right),
\end{equation}
\[
g_{\operatorname{FS},z}(X,X) = \frac{n\,|\langle X,\overline\partial\rho(z)\rangle|^2}{\varepsilon(z)^2} + \frac{n}{\varepsilon(z)}\,L_{\rho(z)}(X,X) + O\!\bigl(\log\varepsilon^{-1}\bigr).
\]
The two singular terms blow up at different rates: $\varepsilon^{-2}$ (normal direction) and $\varepsilon^{-1}$ (tangential direction). The logarithmic term is of lower order and does not affect the leading blow‑up. To compare the relative strengths of the two singularities, we consider the ratio
\[
\frac{ \frac{n|\langle X,\overline\partial\rho(z)\rangle|^2}{\varepsilon(z)^2} }{ \frac{n}{\varepsilon(z)}L_{\rho(z)}(X,X) } = \frac{|\langle X,\overline\partial\rho(z)\rangle|^2}{\varepsilon(z)\,L_{\rho(z)}(X,X)}\approx\frac{|\langle X,\overline\partial\rho(z)\rangle|^2}{\varepsilon(z)}. 
\]
The quantity $|\langle X,\overline\partial\rho(z)\rangle|^2/\varepsilon(z)$ determines which singularity dominates.
 Accordingly, we set
\[
\nu(z) = \frac{|\langle X,\overline\partial\rho(z)\rangle|}{\varepsilon(z)},\qquad 
\tau(z) = \frac{1}{\varepsilon(z)}.
\]
 Choose a constant $C$ large enough, for instance
\[
C > 2M=2\sup_{z\in\overline\Omega}\;\sup_{|X|=1} L_{\rho(z)}(X,X),
\]
which will later allow uniform estimates. We now split the analysis into two disjoint regimes sufficiently near $\pa\Om$ as follows:

\begin{itemize}
\item \textbf{Case N:} $\nu(z)^2 \ge C\tau(z)$---the normal singularity dominates.
\item \textbf{Case T:} $\nu(z)^2 < C\tau(z)$---the tangential singularity dominates.
  
\end{itemize}

\medskip

\emph{Case N}: $\nu(z)^2 \ge C\tau(z)$. We will first establish the lower bound of $g_{\operatorname{FS}}$ near $\pa\Om$. Using \eqref{metric_expr}, we write $g_{\operatorname{FS}}$ in terms of $\tau$ and $\nu$ as
\[g_{\operatorname{FS},z}(X,X)=n\nu(z)^2+n\tau(z)L_{\rho(z)}(X,X)+L_{\mathfrak{h}(z)}(X,X).\]
Using $|L_\rho(X,X)|\le M$, we obtain
\begin{equation*}
    \left| \tau(z)L_{\rho(z)}(X,X) \right|\le \frac{M}{C}\nu(z)^2.
\end{equation*}
This implies 
\[g_{\operatorname{FS},z}(X,X)\geq n\left(1-\frac{M}{C}\right)\nu(z)^2+L_{\mathfrak{h}(z)}(X,X).\]

Using Lemma \ref{asymp} and $\nu(z)^2 \ge C\tau(z)$, we get
\begin{equation}\label{N:Leviform_h}
    |L_{\mathfrak h(z)}(X,X)| =O(|\log\varepsilon(z)|)=o\big(\nu(z)^2\big).
\end{equation}

Thus (in this regime), we have
\begin{equation}\label{N:lowerbdd:feffsgo_metric}
    g_{\operatorname{FS},z}(X,X) \ge \frac{n}{2}\Bigl(1-\frac{M}{C}\Bigr)\nu(z)^2
\end{equation}
for $z$ sufficiently close to $\pa\Om$. 

From (\ref{approx:norm_X_theta}), the numerator satisfies
\[
|\Theta_z(X)|^2 = n^2\nu(z)^2 + O(\nu(z)).
\]
Therefore, the ratio $|\Theta_z(X)|^2/g_{\operatorname{FS},z}(X,X)$ is uniformly bounded from above in this regime.

\medskip

\emph{Case T:} $\nu(z)^2 < C\tau(z)$.
Then $|\langle X,\bar\partial\rho(z)\rangle| \le \sqrt{C\varepsilon(z)}$, so the normal component \(|X_N(z)|\) is of order $\sqrt{\varepsilon(z)}$, i.e., $|X_N(z)|=O(\sqrt{\varepsilon(z)})$. By $|X|=1$, the tangential component $|X_T(z)|$ is bounded below by $1/2$ for small $\varepsilon(z)$, and thus the Levi form $L_\rho(X,X)$ has a uniform positive lower bound. From Lemma \ref{asymp}, we have 
\begin{equation}\label{T:Leviform:h}
    \varepsilon(z)^2 L_{\mathfrak h(z)} = o(\varepsilon(z)).
\end{equation} Combining the above estimates yields
\begin{equation}\label{T:lowerbdd:feffsgo_metric}
    \varepsilon(z)^2 g_{\operatorname{FS},z}(X,X) \ge \frac{n c_0}{32}\,\varepsilon(z)
\end{equation}
for some $c_0>0$. On the other hand $\varepsilon(z)^2|\Theta_z(X)|^2 \le C_2\varepsilon(z)$. Thus the ratio is again uniformly bounded from above in this regime.

Both cases give a uniform upper bound for the ratio near the boundary. Hence $\Theta$ is bounded in the supremum norm.  
\end{proof}

Since $g_{\operatorname{FS}}$ is complete and $\omega = d\Theta$ with $\Theta$ bounded, Theorem \ref{donelly} immediately yields
\[
H_2^k(\Omega) = 0 \quad \text{for all } k \neq n.
\]

\subsection*{Infinite dimensionality in the middle dimension}
To establish the infinite dimensionality in the middle degree, we use the following theorem of Ohsawa \cite{o89}.

\begin{theorem}[Ohsawa]\label{ohsawa}
Let \(D\subset\mathbb{C}^n\) be a domain with a Hermitian metric \(ds^2\). Assume there exists a non‑degenerate regular boundary point \(z_0\in\partial D\) and, in a neighbourhood \(U\) of \(z_0\), a defining function \(\phi\) (so that \(D\cap U = \{\phi<0\}\)) and a Hermitian metric \(ds_U^2\) on \(U\) such that
\[
C^{-1}ds^2 < (-\phi)^{-a}ds_U^2 + (-\phi)^{-b}\partial\phi\,\overline{\partial\phi} < C\,ds^2
\]
on \(U\cap D\) for some constants \(C>0\) and \(1\le a\le b<a+3\). Then for every pair \((p,q)\) with \(p+q=n\) we have \(\dim H_2^{p,q}(D)=\infty\).
\end{theorem}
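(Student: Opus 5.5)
Since Theorem~\ref{ohsawa} is quoted from \cite{o89}, I only indicate how I would prove it. Afterwards I would apply it to $g_{\operatorname{FS}}$ with $\phi=\rho$, $a=1$, $b=2$, using \eqref{metric_expr} and Lemma~\ref{asymp}.

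\textbf{Step 1: reduce to the model metric.} The first step is to note that the $L^2$ Dolbeault cohomology in bidegree $(p,q)$ depends only on the quasi-isometry class of the metric near $z_0$. Two metrics that are uniformly equivalent give equivalent $L^2$ norms on forms of every bidegree, so the domains of $\bar\partial$ and the closures of its ranges coincide. Hence I may replace $ds^2$ on $U\cap D$ by the model metric
\[
ds_0^2=(-\phi)^{-a}ds_U^2+(-\phi)^{-b}\partial\phi\,\overline{\partial\phi}.
\]
Shrinking $U$ and straightening the boundary, I may also assume $\phi=-\operatorname{Re} w_n$ in local holomorphic coordinates $w$. The volume form of $ds_0^2$ is then comparable to $(-\phi)^{-(n-1)a-b}\,dV$. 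A form of type $(p,q)$ has pointwise norm weighted by $(-\phi)^{a}$ for each tangential index and by $(-\phi)^{b}$ for each normal index.

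\textbf{Step 2: an infinite family of classes.} With $p+q=n$, I would take a sequence of mutually disjoint small boundary cutoff regions $V_k\subset U$ accumulating at $z_0$. In each $V_k$ I would build a $\bar\partial$-closed $(p,q)$-form $\alpha_k$ supported in $\overline{V_k}\cap D$, of the schematic type $\bar\partial\bigl(\chi_k(w)\,\psi(w)\bigr)\wedge dw^I\wedge d\bar w^J$. Here $\chi_k$ is a cutoff that is independent of $\phi$ near the boundary, and $\psi$ is a holomorphic or explicit function of $w_n$. The weights in Step 1 let me check $\alpha_k\in L^2$: the integral reduces to a one-variable integral $\int_0 t^{\gamma}\,dt$, which converges because $b<a+3$ and $a\ge1$. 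For the dual objects, I would build $\bar\partial^*$-closed $(n-p,n-q)$-forms $\beta_k$, or equivalently $\bar\partial$-closed forms under the Hodge star. The key point is that the wedge pairing $\int \alpha\wedge\beta$ of middle-degree forms is bounded by $\|\alpha\|_{L^2}\|\beta\|_{L^2}$ independently of the metric.

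\textbf{Step 3: nontriviality.} I would arrange $\int\alpha_k\wedge\beta_l=\delta_{kl}$ and show that $\int(\bar\partial u)\wedge\beta_l=0$ for every $u$ in the domain of $\bar\partial$. This follows by integration by parts, justified by approximating with compactly supported forms; completeness of the metric near the boundary in the normal direction, which uses $b\ge 2$ or at least the growth $(-\phi)^{-b}$, lets the boundary terms vanish. The functional $\gamma\mapsto\int\gamma\wedge\beta_l$ is then continuous, vanishes on $\overline{\operatorname{Im}\bar\partial}$, and separates the $\alpha_k$. This gives $\dim H_2^{p,q}(D)=\infty$.

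\textbf{Main obstacle.} I expect the hardest part to be the weight bookkeeping in Step 2. One must simultaneously keep $\alpha_k$ and $\beta_l$ in $L^2$ and still have a nonzero pairing, and this is exactly where the window $1\le a\le b<a+3$ enters. I would first try the explicit model $D=\{\operatorname{Re} w_n>0\}$, with the forms taken as powers of $\operatorname{Re} w_n$ times cutoffs. I would then check that the perturbation from the curved boundary changes only lower-order terms.
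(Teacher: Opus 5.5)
The paper does not prove this statement. It quotes it from \cite{o89} as a black box, and its only work is Lemma~\ref{comparison}, which checks the hypothesis with $\phi=\rho$, $ds_U^2=g_{\operatorname{Euc}}$, $a=1$, $b=2$; your intended application matches that. As a proof of Ohsawa's theorem itself, your framework is reasonable: $\bar\partial$-closed forms detected by $\bar\partial$-closed test forms through the metric-independent middle-degree pairing. But the outline has genuine gaps, and they sit exactly where the hypotheses must be used.

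\textbf{The flat model.} You cannot take $\phi=-\operatorname{Re}w_n$ in \emph{holomorphic} coordinates. A real hypersurface is locally biholomorphic to a hyperplane only if it is Levi-flat, and the boundary points at issue (e.g.\ the strongly pseudoconvex ones of this paper) are not. Nor is the curvature a lower-order perturbation of your weight bookkeeping. In the Siegel model $\{\operatorname{Re}w_n>|w'|^2\}$ with $t=-\phi$, one has $dw_n=-2\partial\phi+2\sum_{j<n}\bar w_j\,dw_j$. So in the model metric $|dw_n|^2\approx t^{a}|w'|^2+t^{b}$, not $t^{b}$. Take $n=2$, $(p,q)=(1,1)$ and $\chi_k=\chi_k(w_1)$ near $\partial D$. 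Your form $\alpha_k=\psi\,\partial_{\bar w_1}\chi_k\,d\bar w_1\wedge dw_2$ then satisfies $|\alpha_k|^2\,dV\approx|\psi|^2|\partial_{\bar w_1}\chi_k|^2\bigl(1+|w_1|^2t^{a-b}\bigr)\,dV_{\operatorname{Euc}}$. This is not integrable once $b-a\ge1$, in particular for $a=1$, $b=2$, because a nonzero holomorphic $\psi$ cannot vanish on an open piece of $\partial D$. So the construction must account for the Levi form. Forms built from $\partial\phi$ instead of $dw_n$ are no longer $\bar\partial$-closed, since $\bar\partial\partial\phi=-\partial\bar\partial\phi$ is the Levi form. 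Your sketch never says where the non-degeneracy of $z_0$ or the bound $b<a+3$ enters, and the claimed convergence of some $\int_0t^{\gamma}dt$ ``because $b<a+3$ and $a\ge1$'' comes with no $\gamma$. That construction is the actual content of the theorem, and it is missing.

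\textbf{The boundary terms in Step~3.} You rely on completeness in the normal direction, which you tie to $b\ge2$. The hypothesis allows $1\le b<2$ (e.g.\ $a=b=1$), where the normal distance $\int_0t^{-b/2}dt$ is finite, and $ds^2$ is not assumed complete at all. The vanishing of $\int\bar\partial u\wedge\beta_l$ for every $u\in\operatorname{Dom}\bar\partial$ has to be obtained differently, and the natural source is $a\ge1$.
\begin{itemize}
\item In a frame adapted to $\partial\phi$, the only components of an $L^2$ $(n-1)$-form $u$ that can survive in $\int_{\{\phi=-t\}}u\wedge\beta_l$ are those with at most one factor among $\partial\phi,\bar\partial\phi$.
\item These components are square-integrable against $(-\phi)^{-a}$ or $(-\phi)^{-b}$ with respect to $dV_{\operatorname{Euc}}$, where $b\ge a\ge1$.
\item Since $\int_0dt/t=\infty$, their $L^2$ norms on level sets tend to $0$ along some $t_j\to0$. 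This kills the boundary terms for bounded $\beta_l$.
\end{itemize}
The same weight is what makes the primitive $\chi_k\psi\,dw^I\wedge d\bar w^J$ non-$L^2$, which is forced anyway by $\int\alpha_k\wedge\beta_k\ne0$. Your sketch does not identify this mechanism.

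\textbf{A minor point.} Step~1 overstates: $L^2$ cohomology is not determined by the metric near $z_0$. What you may do is replace $ds^2$ by a globally quasi-isometric metric that equals the model near $z_0$.
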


To apply this to \(\Omega\), we establish the following comparison.

\begin{lemma}\label{comparison}
Let $\rho$ be a $C^\infty$-smooth defining function of $\Omega$. Then
\[
g_{\operatorname{FS}} \;\approx\; (-\rho)^{-1} g_{\operatorname{Euc}} \;+\; \rho^{-2}\,\partial\rho\,\overline{\partial}\rho,
\]  
where $g_{\operatorname{Euc}}$ denotes the Euclidean metric.
\end{lemma}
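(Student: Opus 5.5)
We want to show that $g_{\operatorname{FS},z}(X,X)$ is comparable to
\[
Q_z(X):=\varepsilon(z)^{-1}|X|^2+\varepsilon(z)^{-2}|\langle X,\bar\partial\rho(z)\rangle|^2,
\]
uniformly for $z\in\Omega$ and $X\in\mathbb C^n$, with $\varepsilon=-\rho$. The starting point is the decomposition \eqref{metric_expr}:
\[
g_{\operatorname{FS},z}(X,X)=n\varepsilon^{-2}|\langle X,\bar\partial\rho\rangle|^2+n\varepsilon^{-1}L_{\rho}(X,X)+L_{\mathfrak h}(X,X).
\]
The idea is that the first two terms alone are comparable to $Q_z$, and that $L_{\mathfrak h}$ is a lower-order perturbation near $\partial\Omega$.

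On a compact set $K\Subset\Omega$ both sides are continuous, positive definite Hermitian forms, so they are comparable there. It therefore suffices to work in a collar $\{0<\varepsilon<\delta\}$.

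For the \textbf{upper bound}, use $L_\rho(X,X)\le M|X|^2$. Lemma~\ref{asymp} gives $|L_{\mathfrak h}(X,X)|\le C\log(\varepsilon^{-1})|X|^2$, and $\log\varepsilon^{-1}\le\varepsilon^{-1}$ for small $\varepsilon$. Together these give $g_{\operatorname{FS}}\le C\,Q_z$ immediately.

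For the \textbf{lower bound}, the key point is that $\rho$ is strongly plurisubharmonic. Hence $L_\rho(X,X)\ge c_0|X|^2$ on $\overline\Omega$ for all $X$, not only complex tangential ones. This gives
\[
n\varepsilon^{-2}|\langle X,\bar\partial\rho\rangle|^2+n\varepsilon^{-1}L_\rho(X,X)\ge \min(1,c_0)\,n\,Q_z(X).
\]
The remaining term satisfies $|L_{\mathfrak h}(X,X)|\le C\log(\varepsilon^{-1})|X|^2=o(\varepsilon^{-1}|X|^2)$. Shrinking $\delta$, it can be absorbed into half of the $c_0 n\varepsilon^{-1}|X|^2$ term, which gives $g_{\operatorname{FS}}\ge c\,Q_z$. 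This step avoids the case N / case T splitting used in Lemma~\ref{theta_bounded}, because strong plurisubharmonicity controls the normal direction in $L_\rho$ as well.

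The lemma is stated for an arbitrary smooth defining function $\rho$, which need not be plurisubharmonic. To handle this, note that any two defining functions satisfy $\rho'=u\rho$ with $u>0$ smooth on a neighborhood of $\partial\Omega$. Then $-\rho'\approx-\rho$, and
\[
\partial\rho'=u\,\partial\rho+\rho\,\partial u .
\]
Consequently
\[
\rho'^{-2}|\partial\rho'(X)|^2\le C\bigl(\rho^{-2}|\partial\rho(X)|^2+|X|^2\bigr),
\]
and $|X|^2\le\varepsilon^{-1}|X|^2$, so the right-hand side of the lemma computed with $\rho'$ is comparable to the one computed with $\rho$. The statement therefore reduces to the strongly plurisubharmonic $\rho$ fixed in this section.

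The only genuinely nontrivial input is the bound on $\partial_{i\bar j}\mathfrak h$ from Lemma~\ref{asymp}. This is the main potential obstacle, especially for $n=2$, where there is a logarithmic blow-up. The required bound is only $o(\varepsilon^{-1})$, so the logarithmic loss is harmless. I would make sure that the $O$-constants in Lemma~\ref{asymp} are uniform along $\partial\Omega$, which follows from compactness of $\partial\Omega$ and the uniformity of the Fefferman-type expansion of $S$.
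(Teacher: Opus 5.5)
Your proof is correct, but it is organized differently from the paper's.

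The paper reuses the Case N / Case T splitting from the proof of Lemma~\ref{theta_bounded}. When $\nu^2\ge C\tau$ it shows $g_{\operatorname{FS}}\approx\nu^2\approx ds^2$. When $\nu^2<C\tau$ it shows $g_{\operatorname{FS}}\approx\tau\approx ds^2$, with the lower bound in that case coming from positivity of the Levi form on unit vectors whose normal component is $O(\sqrt{\varepsilon})$.

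You instead use that the fixed $\rho$ is strongly plurisubharmonic, so $L_\rho(X,X)\ge c_0|X|^2$ in every direction, including the normal one. Then $n\varepsilon^{-2}|\langle X,\bar\partial\rho\rangle|^2+n\varepsilon^{-1}L_\rho(X,X)$ already dominates $\min(1,c_0)\,n\,Q_z(X)$. The only error term is $L_{\mathfrak h}=O(\log\varepsilon^{-1})=o(\varepsilon^{-1})$, so no splitting is needed. This is shorter and cleaner. The same lower bound $g_{\operatorname{FS}}\ge c\,Q_z$ would also give Lemma~\ref{theta_bounded} at once, since $|\Theta_z(X)|^2\le C\bigl(\varepsilon^{-2}|\langle X,\bar\partial\rho\rangle|^2+|X|^2\bigr)$ by Lemma~\ref{asymp}.

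The cost is that you rely on plurisubharmonicity of $\rho$. The paper's case analysis uses only $|L_\rho|\le M$ together with positivity of the Levi form near complex tangential directions, so it works essentially verbatim for any smooth defining function of a strongly pseudoconvex domain. You compensate with the reduction $\rho'=u\rho$, $u>0$, which is correct. It also makes explicit that the right-hand side is independent of the choice of defining function up to comparability, a point the paper's proof leaves implicit.
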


\begin{proof}
Set $ds^2 = (-\rho)^{-1}g_{\operatorname{Euc}} + \rho^{-2}\partial\rho\,\overline{\partial}\rho$. Since $ds^2_z(X,X)$ and $g_{\operatorname{FS},z}(X,X)$ are homogeneous in $X$, we can assume $|X|=1$ throughout the proof. On compact subsets the equivalence is trivial thus it is enough to consider points near the boundary, and consider the same two cases as in the proof of Lemma \ref{theta_bounded}.

\medskip

 \emph{Case N} ($\nu^2 \ge C\tau$). By \eqref{N:lowerbdd:feffsgo_metric}, we have 
\[g_{\operatorname{FS}}(X,X)\ge \frac{n}{2}\left(1-\frac{M}{C}\right)\nu(z)^2.\] The upper bound of $g_{\operatorname{FS}}$ is
\[g_{\operatorname{FS}}(X,X)\leq n\left(1+\frac{M}{C}\right)\nu^2+o(\nu)^2, \]
which follows from $\nu^2 \ge C\tau$ and \eqref{N:Leviform_h}. 

Thus, we get $g_{\operatorname{FS}} \approx \nu^2$, while $ds^2 = \tau + \nu^2 \approx \mathfrak \nu^2$. Therefore, $g_{\operatorname{FS}}\approx ds^2$ near $\pa\Om$. 

\medskip

\emph{Case T} ($\nu^2 < C\tau$). From \eqref{T:lowerbdd:feffsgo_metric}, we have 
\[g_{FS}\geq \frac{nc_0}{32}\tau.\]
Using \eqref{T:Leviform:h} and $\nu^2 < C\tau$, we obtain the upper bound of $g_{\operatorname{FS}}$ as 
\[g_{\operatorname{FS}}(X,X)\leq n(C+M)\tau+o(\tau)=O(\tau).\]

This yields $g_{\operatorname{FS}} \approx \tau$. Since $ds^2=\tau+\nu^2 \approx \tau$, we finally obtain $g_{\operatorname{FS}}\approx ds^2$ near $\pa\Om$.  
\end{proof}

Now take $D=\Omega$, $\phi=\rho$ (so that $-\phi = -\rho >0$ in $\Omega$), $ds_U^2 = g_{\operatorname{Euc}}$, $a=1$, $b=2$. Strict pseudoconvexity guarantees that every boundary point is regular and non‑degenerate. The hypothesis of Theorem \ref{ohsawa} is satisfied, and we conclude
\[
\dim H_2^{p,q}(\Omega) = \infty \quad \text{for all }\; p+q=n.
\]
This completes the proof of Theorem \ref{l2-co}.

\section{Characterization of the complex unit ball}
In this section, we prove Theorem~\ref{main:rigidity}. 
A key ingredient in the proof of Part~(ii) is the $C^\infty$-bounded geometry of the Fefferman--Szeg\H{o} metric established in Part~(i), which we prove first.

\subsection{Bounded geometry}
The proof of Theorem \ref{main:rigidity} (i) is inspired by \cite{za21}. We begin by recalling the notions of quasi-bounded geometry and bounded geometry.

\begin{definition}
    Let $(M,g)$ be a complete K\"ahler manifold. For $x\in M$, the injectivity radius at $x$ is the largest $r>0$ such that the exponential map $\exp_{x}: \mathbb{B}_{r}(\subset T_{x}M)\to M$ is a diffeomorphism onto its image. The injectivity
radius of $M$ is the infimum of the injectivity radius over all points of $M$.
\end{definition}

\begin{definition}\label{def gem}
    Let $(M,g)$ be a complete K\"ahler manifold and let $s$ be a nonnegative integer. We say  that $(M,g)$ has $C^{s}$-quasi-bounded geometry if for every integer $0\leq p\le s$, there exists a constant $C_{p} > 0$ such that
    \[\sup_{M}\left\|\nabla^p \operatorname{Rm}\right\|_{g}\le C_{p},\]
    where $\operatorname{Rm}$ denotes the Riemann curvature tensor of $g$. 
\end{definition}
 If, in addition, the injectivity radius of $(M, g)$ is positive, then  $(M, g)$ is said to have $C^{s}$-bounded geometry.
\begin{definition}\label{defn:BBG}
We say that a domain $\Omega \subset \mathbb{C}^{n}$ has bounded intrinsic geometry if it admits a complete K\"ahler metric $g$ such that the following hold:
\begin{enumerate}[label=(b.\arabic*)]
\item\label{item:bd_sec} $g$ has bounded sectional curvatures and positive injectivity radius;

\item\label{item:SBG} there exists a $C^2$ function $f : \Omega \rightarrow \mathbb{R}$ such that  the Levi form of $f$ is uniformly bi-Lipschitz to  $g$ and $\norm{\partial^{1,0} f}_{g}$ is bounded on $\Omega$. Here, $\partial^{1,0} f$ means $\partial^{1,0} f=\partial_i fdz^i$.
\end{enumerate}
\end{definition}
\begin{remark}\label{b2_feffsgo}
   Property \ref{item:SBG} for the Fefferman–Szeg\H o metric follows from Lemma \ref{theta_bounded}. Moreover, every bounded strongly pseudoconvex domain with $C^2$-smooth boundary has bounded intrinsic geometry; see \cite{za21}.
\end{remark}
We shall use the following two results from the work of Zimmer \cite{za21}.
\begin{theorem}\cite[Theorem 1.2]{za21}\label{thm:bergman_reduction}
Let $\Omega \subset \mathbb{C}^{n}$ be a domain. Then $\Omega$ has bounded intrinsic geometry if and only if the Bergman metric $g_{\operatorname{B}}^\Omega$ satisfies Definition~\ref{defn:BBG}.
Moreover, for every $m\geq 0$,
\begin{align*}
\sup_{z \in \Omega} \norm{\nabla^m \operatorname{Rm}}_{g_{\operatorname{B}}^\Omega} < \infty
\end{align*}
 where $\operatorname{Rm}$ is the Riemann curvature tensor of $g_{\operatorname{B}}^\Om$.
\end{theorem}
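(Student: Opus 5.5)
The reverse implication is immediate. If the Bergman metric $g_{\operatorname{B}}^\Omega$ satisfies Definition~\ref{defn:BBG}, then $\Omega$ has bounded intrinsic geometry by definition, with $g=g_{\operatorname{B}}^\Omega$. The substance is the forward direction together with the bounds on $\nabla^m\operatorname{Rm}$. So the plan is to start from an arbitrary complete Kähler metric $g$ satisfying \ref{item:bd_sec}--\ref{item:SBG} and show that $g_{\operatorname{B}}^\Omega$ is uniformly bi-Lipschitz to $g$, with all derivatives controlled in suitable local coordinates.

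\textbf{Step 1: uniform holomorphic charts.} Bounded sectional curvature and positive injectivity radius give, around each point $z\in\Omega$, holomorphic coordinates $\Phi_z:\mathbb B_r\to\Omega$ with $r$ independent of $z$. I would construct these in the style of Tian--Yau/Wu--Yau, perturbing exponential coordinates by a $\bar\partial$-correction and using the Kähler identities. In such a chart, $C^{-1}g_{\operatorname{Euc}}\le\Phi_z^*g\le Cg_{\operatorname{Euc}}$. By \ref{item:SBG} we may replace $g$ by the Levi form of $f$, which is bi-Lipschitz to $g$. Then $\Phi_z^*f$ is a plurisubharmonic potential on $\mathbb B_r$ whose gradient is bounded independently of $z$, because $\|\partial f\|_g$ is bounded.

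\textbf{Step 2: localization of the Bergman kernel.} The aim is to compare $K_\Omega$ near $z$ with the Bergman kernel of a Euclidean ball in the chart, via Hörmander's $L^2$ estimates. Take a cutoff $\chi$ supported in the chart and the weight $\psi=A f+ n\log|\Phi_z^{-1}(\cdot)|^2\chi$. The boundedness of $\|\partial f\|_g$ serves two purposes:
\begin{itemize}
\item $e^{-f}$-type weights have Levi form comparable to $g$, so Donnelly--Fefferman/Berndtsson twisting applies with a weight whose gradient is bounded;
\item the error term $|\bar\partial\chi|^2$ is dominated uniformly in $z$.
\end{itemize}
This yields holomorphic functions on $\Omega$ that peak at $z$ with prescribed jets of any fixed order, with $L^2$ norms controlled uniformly. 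From there:
\begin{itemize}
\item $K_\Omega(z,z)\,|\det d\Phi_z|^2$ is bounded above and below uniformly in $z$;
\item more generally, the pulled-back function $\log K_\Omega\circ\Phi_z-\log|\det d\Phi_z(0)|^2$ has uniform $C^k$ bounds on $\mathbb B_{r/2}$ for every $k$. The upper bounds follow from the sub-mean value property in the chart, and the lower bounds from the extremal characterization using the jet functions.
\end{itemize}

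\textbf{Step 3: conclusion.} The $C^k$ bounds on the local potential show that $\Phi_z^*g_{\operatorname{B}}^\Omega$ is uniformly equivalent to $g_{\operatorname{Euc}}$, with all Euclidean derivatives uniformly bounded on $\mathbb B_{r/2}$. Hence:
\begin{itemize}
\item $g_{\operatorname{B}}^\Omega\approx g$;
\item $\sup\|\nabla^m\operatorname{Rm}\|_{g_{\operatorname{B}}^\Omega}<\infty$ for all $m$;
\item the injectivity radius of $g_{\operatorname{B}}^\Omega$ is positive, by the Cheeger--Gromov--Taylor estimate, since the uniform charts give a lower bound on the volume of unit balls;
\item $f$ still satisfies \ref{item:SBG} for $g_{\operatorname{B}}^\Omega$, by bi-Lipschitz equivalence.
\end{itemize}

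\textbf{Main obstacle.} I expect the hardest part to be the uniform lower bound in Step 2, that is, producing the peaking holomorphic functions with constants independent of $z$ as $z\to\partial\Omega$. This is the only place where the potential $f$ with bounded $\partial f$ is essential. My first attempt is the Donnelly--Fefferman trick: write $e^{-f}$-weighted estimates with the weight $-\log(-e^{-\delta f})$-type modifications, so that the bounded gradient becomes a bounded twisting factor.
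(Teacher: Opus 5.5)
Your architecture matches Zimmer's proof, which the paper quotes without proof. The paper's Section~4.1 replays the same scheme for the Szeg\H{o} kernel, using Theorem~\ref{thm:charts} and Zimmer's localization $K_\Omega(\zeta)\approx K_{\Phi_\zeta(\mathbb B^n)}(\zeta)$. The scheme is: uniform holomorphic charts, $L^2$-localization of the Bergman kernel via the potential $f$, Cauchy estimates on a local kernel, then curvature bounds and injectivity radius from volume. Two steps, however, fail as you wrote them; both can be repaired.

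\emph{The $L^2$ step.} The weight $Af+n\chi\log|\Phi_z^{-1}(\cdot)-w|^2$ does not work as it stands.
\begin{itemize}
\item H\"ormander with $e^{-Af}$ only controls $\int|u|^2e^{-Af}$. When $f\to+\infty$ at $\partial\Omega$ (e.g.\ $f=-\log(-\rho)$), you get no unweighted control near the boundary, so $\chi F-u$ need not lie in $A^2(\Omega)$.
\item The cut-off pole is not psh. Its negative Levi form is $O(g)$ on the chart, not small.
\end{itemize}
The fix has two parts. First, take $\psi_0=cf$ with $c$ so small that $i\partial\psi_0\wedge\bar\partial\psi_0\le i\partial\bar\partial\psi_0$; this is where $\|\partial f\|_g<\infty$ and $i\partial\bar\partial f\approx g$ are used. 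Berndtsson's twisted estimate, applied with weight $\varphi+\delta\psi_0$, then gives $\int|u|^2e^{-\varphi}\le C\int|\alpha|_g^2e^{-\varphi}$ for every psh $\varphi$. Second, absorb the pole's negativity with a \emph{bounded} psh function whose Levi form is $\gtrsim g$ on the chart, e.g.\ $M\max\{-e^{-c(f-f(z))},-K\}$. This is psh because $-e^{-cf}$ is. It equals $-e^{-c(f-f(z))}$ on $\Phi_z(\mathbb B_r)$, since $f$ is $g$-Lipschitz and the chart has bounded $g$-diameter. With this $\varphi$ one has $e^{-\varphi}\ge1$ on $\Omega$ and $e^{-\varphi}\le C$ on $\operatorname{supp}\alpha$, which yields the uniform lower bounds.

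\emph{The local potential.} The function $\log K_\Omega\circ\Phi_z-\log|\det d\Phi_z(0)|^2$ is not uniformly bounded. By your first bullet it equals $2\log K_\Omega(z)+O(1)$ at $w=0$, which is unbounded whenever $K_\Omega$ is (e.g.\ on strongly pseudoconvex domains).

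Even with the sign corrected, it differs from a bounded quantity by $\log|\det d\Phi_z(w)/\det d\Phi_z(0)|^2$. The hypotheses do not control this term for $n\ge2$, since there is no Koebe distortion. For example, let $\Omega=F(\mathbb B^2)$ with $F(z)=(z_1,\,z_2e^{-(1+z_1)/(1-z_1)})$. This domain has bounded intrinsic geometry (transport $g_{\operatorname{B}}^{\mathbb B^2}$ and $-\log(1-|z|^2)$ by $F$). Yet $\log K_\Omega(F(t,0))=\log K_{\mathbb B^2}(t,0)+2\frac{1+t}{1-t}$ oscillates unboundedly on Bergman balls of fixed radius.

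The right object is the local kernel $K^{\mathrm{loc}}_z(w,w')=K_\Omega(\Phi_z(w),\Phi_z(w'))\det d\Phi_z(w)\overline{\det d\Phi_z(w')}$, normalized by the Jacobian at the moving point.
\begin{itemize}
\item Its diagonal is bounded above on $\mathbb B_{r/2}$ by monotonicity, $K^{\mathrm{loc}}_z(w,w)\le K_{\mathbb B_r}(w)$.
\item It is bounded below by the $L^2$ construction centred at $\Phi_z(w)$, using the same chart.
\item Cauchy--Schwarz and Cauchy estimates then bound all derivatives of $\log K^{\mathrm{loc}}_z(w,w)$.
\item This is a potential for $\Phi_z^*g_{\operatorname{B}}^\Omega$, since it differs from $\log K_\Omega\circ\Phi_z$ by a pluriharmonic term.
\end{itemize}
This is exactly the scheme of Lemma~\ref{local_kernel}. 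With these two repairs, the rest of your Step~3 goes through: $g_{\operatorname{B}}^\Omega\approx g$ (which also gives completeness), the curvature bounds, and the injectivity radius via Cheeger--Gromov--Taylor.
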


\begin{theorem}\cite[Theorem 5.1 (1)]{za21}\label{thm:charts} 
Let $\Omega \subset \mathbb{C}^n$ be a domain, and let $g$ be a complete K\"ahler metric on $\Omega$. If $g$ satisfies Property~\ref{item:bd_sec}, then for every $\zeta \in \Omega$, there exists a holomorphic embedding $\Phi_\zeta : \mathbb{B}^n \rightarrow \Omega$ with  $\Phi_\zeta(0) = \zeta$, satisfying $\Phi_\zeta^* g \approx g_{\operatorname{Euc}}.$
\end{theorem}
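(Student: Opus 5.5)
The plan is to build, around each $\zeta\in\Omega$, holomorphic coordinates whose size is controlled uniformly by the curvature bound and the injectivity radius, and then to rescale them. Let $K$ bound the sectional curvatures of $g$ and let $r_0>0$ be a lower bound for its injectivity radius. Set $r=\min\{r_0,\pi/(4\sqrt K)\}/2$. The exponential map $\exp_\zeta:\mathbb B_r(\subset T_\zeta\Omega)\to\Omega$ is a diffeomorphism onto the geodesic ball $B_g(\zeta,r)$. By Rauch comparison (Jacobi field estimates), the pullback $\exp_\zeta^*g$ is uniformly comparable to the flat metric $g_\zeta$ on $\mathbb B_r$, with constants depending only on $K$ and $r_0$. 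Identify $(T_\zeta\Omega,g_\zeta)$ with $(\mathbb C^n,g_{\operatorname{Euc}})$ via a unitary frame. This gives normal coordinates $w=(w_1,\dots,w_n)$ on $B_g(\zeta,r)$ which are uniformly bi-Lipschitz but in general not holomorphic. Only the first-order term of $\bar\partial w_j$ vanishes at $\zeta$, because $g$ is K\"ahler.

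Next I would correct $w_j$ to holomorphic functions via H\"ormander's $L^2$ estimates. Let $d=d_g(\zeta,\cdot)$. The curvature bound together with the Hessian comparison theorem shows that $d^2$ is smooth on $B_g(\zeta,r)$ and has complex Hessian $\ge c\,g$ there, with $c$ depending only on $K$. Hence $\psi=A d^2$ is strictly plurisubharmonic, and so is the weight $\varphi=\psi+2n\log d$ after adding a cutoff. The bound I need is $|\bar\partial w_j|_g\le C d$, and this is where the difficulty lies. Making it uniform requires second-order control of the exponential map, for instance $|\nabla J|=O(d)$, that is, control of the complex structure in normal coordinates. Bounded curvature alone gives only $C^{1,\alpha}$-type control, and only in harmonic coordinates. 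What I would try first is to work in harmonic coordinates of Anderson/Jost–Karcher type, with radius depending only on $K$ and $r_0$; there $g_{ab}$ is uniformly $C^{1,\alpha}$. Since $J$ is parallel, the Kähler condition lets us express $J$ through $g$ and its first derivatives, so it is also uniformly $C^{0,\alpha}$. This yields $|\bar\partial w_j|\le Cd^{\alpha}$, which still suffices for the argument below. Now choose a cutoff $\chi$ supported in $B_g(\zeta,r)$ and solve $\bar\partial u_j=\bar\partial(\chi w_j)$ with the weight $\varphi$. The singular factor $\log d$ forces $u_j(\zeta)=0$ and $du_j(\zeta)=0$, and the estimate gives a uniform weighted $L^2$ bound on $u_j$. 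Setting $f_j=\chi w_j-u_j$, each $f_j$ is holomorphic on $B_g(\zeta,r/2)$ and satisfies $f_j(\zeta)=0$ and $df_j(\zeta)=dw_j(\zeta)$.

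Then I would upgrade to sup bounds. On the bi-Lipschitz chart, interior elliptic estimates (mean value and Cauchy estimates in the $w$-coordinates) turn the $L^2$ bound on $u_j$ into $C^1$ and $C^2$ bounds on a smaller ball. Scaling the estimate shows that $\|du_j\|$ is small on $B_g(\zeta,\delta)$ when $\delta$ is small. Therefore $f=(f_1,\dots,f_n)$ is a uniform $C^1$-perturbation of the bi-Lipschitz map $w$. A quantitative inverse function theorem then gives a fixed $\delta>0$, independent of $\zeta$, such that $f$ is injective on $B_g(\zeta,2\delta)$ and its image contains a Euclidean ball $\mathbb B_{s}$ with $s$ uniform.

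Finally, define $\Phi_\zeta(v)=f^{-1}(sv)$ for $v\in\mathbb B^n$. This map is a holomorphic embedding with $\Phi_\zeta(0)=\zeta$. Because $df$ and $df^{-1}$ are uniformly bounded with respect to $g$ and $g_{\operatorname{Euc}}$, and because the scaling by $s$ is uniform, we get $\Phi_\zeta^*g\approx g_{\operatorname{Euc}}$ with constants independent of $\zeta$.
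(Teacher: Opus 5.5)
The paper gives no proof of this statement (it is quoted from Zimmer), so I am judging your outline on its own. Your overall strategy is the standard one: uniform coordinates, a $\bar\partial$-correction to holomorphic coordinates, then a quantitative inverse function theorem. But the $\bar\partial$ step does not deliver what the rest of your argument uses.

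\textbf{The gap.} With $\varphi=Ad^2+2n\log d$, finiteness of $\int|u_j|^2e^{-\varphi}$ forces only $u_j(\zeta)=0$. Indeed, if $u_j(\zeta)=0$ but $du_j(\zeta)\neq0$, then $|u_j|^2e^{-\varphi}\approx t^{2-2n}$ at distance $t$ from $\zeta$, and $\int_0^1 t^{2-2n}\,t^{2n-1}\,dt<\infty$. To force $du_j(\zeta)=0$ you need the weight $(2n+2)\log d$. Then finiteness of the right-hand side near $\zeta$ generally requires $|\bar\partial w_j|=O(t^{1+\beta})$ with $\beta>0$. The $O(t^\alpha)$ you settle for in harmonic coordinates is not enough, and neither is the $O(t)$ you say you need.

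Once $du_j(\zeta)=0$ is lost, nothing in your argument controls $du_j$ near $\zeta$. The $L^2$ bound on $u_j$ is merely uniform, not small, because the cutoff term $w_j\,\bar\partial\chi$ has sup norm of order one at every scale. So the claim that rescaling makes $\|du_j\|$ small has nothing to rest on, you get no lower bound on $df(\zeta)$, and the quantitative inverse function theorem has no input.

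Two smaller points. First, Cauchy estimates in the $w$-coordinates are not available, since $f_j$ is holomorphic for $J$, not for the standard structure of those coordinates. Second, the pointwise bound is not really where the difficulty lies. In normal coordinates $J$ is constant in a radial parallel frame, so Jacobi field estimates already give $|J-J_0|\le CKd^2$. The real obstruction there is that curvature bounds control $g$ only in $C^0$, which is too weak for the elliptic step.

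\textbf{A repair.} In harmonic coordinates $J$ is uniformly $C^{1,\alpha}$, not just $C^{0,\alpha}$, because $\nabla J=0$ reads $\partial J=\Gamma * J$ with $\Gamma\in C^{0,\alpha}$.
Make a linear change so that $g(\zeta)$ is Euclidean and $J(\zeta)=J_0$, and rescale a coordinate ball of radius $\varepsilon$ to the unit ball $B_1$. Then $\|J-J_0\|_{C^1(B_1)}=O(\varepsilon)$. Hence $|x|^2$ is strictly plurisubharmonic, $B_1$ is pseudoconvex, and $\sup_{B_1}|\bar\partial w_j|=O(\varepsilon)$.
Drop the cutoff and the logarithmic weight, and solve $\bar\partial u_j=\bar\partial w_j$ on all of $B_1$ with weight $|x|^2$. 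This gives $\|u_j\|_{L^2(B_1)}=O(\varepsilon)$.
Both $w_j$ (a combination of harmonic coordinates) and $f_j=w_j-u_j$ (holomorphic, hence harmonic on a K\"ahler manifold) satisfy $\Delta_g=0$. In harmonic coordinates $\Delta_g=g^{ab}\partial_a\partial_b$ has uniformly $C^{1,\alpha}$ coefficients, so interior Schauder estimates give $\|u_j\|_{C^2(B_{1/2})}=O(\varepsilon)$.
Hence $f=w-u$ is a $C^1$-small perturbation of the identity. It is injective on $B_{1/2}$ with image containing a fixed ball, uniformly in $\zeta$, and your final rescaling step then goes through.

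If you prefer to keep the cutoff, use the weight $(2n+2)\log d$ in coordinates where $\Gamma(\zeta)=0$, for example harmonic coordinates corrected by a quadratic polynomial. Then $|\bar\partial w_j|=O(d^{1+\alpha})$, and the vanishing of $du_j(\zeta)$ is genuinely forced.
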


\begin{remark}\label{ree}
In the above result, we can choose $\Phi_\zeta$ which extends smoothly to $\pa \mathbb B^n$. Throughout the remainder of the section, we fix such a choice of maps $\Phi_\zeta$.
\end{remark}

\begin{proof}[Proof of Theorem \ref{main:rigidity} (i)]
   Let $\Omega\subset\mathbb{C}^{n}$ be a strongly pseudoconvex bounded domain with $C^\infty$-smooth boundary. It is known that $\Omega$ has bounded intrinsic geometry and therefore, there exists a complete K\"ahler metric $g$ that satisfies Definition \ref{defn:BBG}. We can take $g$ to be $g_{\operatorname{B}}^\Om$  \ref{defn:BBG} because of Theorem \ref{thm:bergman_reduction}. Hence by Theorem~\ref{thm:charts}, there exists a holomorphic embedding $\Phi_{\zeta}: \mathbb{B}^n\to \Omega$ that extends $C^\infty$-smoothly to the boundary which satisfies 
   \begin{equation}
      \Phi_\zeta(0)=\zeta\quad\text{ and }\quad \Phi_\zeta^* g_{\operatorname{B}}^\Om \approx g_{\operatorname{Euc}}.
   \end{equation}
   
   Using these functions, we introduce the following “local Fefferman-Szeg\H o kernels" as follows. For every $\zeta\in\Omega$, define $S^{\text{loc}}_{\zeta}: ~\mathbb{B}^n\times\mathbb{B}^n\to\mathbb{C}$ as
\begin{align*}
    S^{\text{loc}}_{\zeta}(z,w)=S_{\Omega}(\Phi_{\zeta}(z),\Phi_{\zeta}(w))\left(\det J_{\mathbb C}\Phi_{\zeta}(z)\right)^{\frac{n}{n+1}}\left(\overline{\det J_{\mathbb C}\Phi_{\zeta}(w)}\right)^{\frac{n}{n+1}}.
\end{align*}

We first establish estimates for the function $S^{\text{loc}}_\zeta$ and its derivatives. 
\begin{lemma}\label{local_kernel} We have the following
\begin{enumerate}
\item There exists a constant $C>1$, independent of $\zeta $, such that 
\[C^{-1}\leq S^{\text{loc}}_\zeta(z,z)\leq C\]
$\text{ for all }\zeta\in\Om\text{ and } z\in\mathbb B^n$.
\item  If $\delta \in (0,1)$, then for all multi-indices $a,b$ there exists $C_{a,b}=C_{a,b}(\delta) > 0$ such that
\begin{align*}
\sup_{\zeta\in\Om;~z,w\in\mathbb B^n_\delta}\abs{\frac{\partial^{\abs{a}+\abs{b}}S^{\text{loc}}_\zeta}{\partial z^{a}\partial \bar{w}^b}(z,w)} \leq C_{a,b}.
\end{align*}

\end{enumerate}
\end{lemma}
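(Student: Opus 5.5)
The plan is to follow Zimmer's treatment of the local Bergman kernels in \cite{za21}, transplanting it to the Fefferman--Szeg\H o kernel. The key is a comparison between $S_\Omega$ on the diagonal and the Bergman metric volume. By the asymptotic expansion $S_\Omega(z)=\varepsilon(z)^{-n}e^{\mathfrak h(z)}$ with $\mathfrak h$ bounded near $\partial\Omega$ (Lemma~\ref{asymp} and the expansion from \cite{b25}), and by Fefferman's expansion of the Bergman kernel $K_\Omega(z)\approx \varepsilon(z)^{-(n+1)}$, we have
\[
S_\Omega(z)\approx K_\Omega(z)^{\frac{n}{n+1}} \quad\text{on }\Omega.
\]
We also use $K_\Omega\approx \det G_{\operatorname{B}}$, where $G_{\operatorname{B}}$ is the Bergman metric matrix. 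This follows from Fefferman's expansion together with the standard comparison $\det G_{\operatorname{B}}/K_\Omega\to (n+1)^n\pi^n/n!$ on strongly pseudoconvex domains.

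\textbf{Part (1).} Write $S^{\rm loc}_\zeta(z,z)=S_\Omega(\Phi_\zeta(z))\,|\det J_{\mathbb C}\Phi_\zeta(z)|^{\frac{2n}{n+1}}$. From $\Phi_\zeta^*g_{\operatorname{B}}^\Omega\approx g_{\operatorname{Euc}}$, with constants independent of $\zeta$, we get
\[
|\det J_{\mathbb C}\Phi_\zeta(z)|^2\,\det G_{\operatorname{B}}(\Phi_\zeta(z))\approx 1 .
\]
Combining this with $S_\Omega\approx K_\Omega^{n/(n+1)}\approx (\det G_{\operatorname B})^{n/(n+1)}$ gives
\[
S^{\rm loc}_\zeta(z,z)\approx \bigl(|\det J_{\mathbb C}\Phi_\zeta|^2\det G_{\operatorname B}(\Phi_\zeta)\bigr)^{\frac{n}{n+1}}\approx1,
\]
uniformly in $\zeta\in\Omega$ and $z\in\mathbb B^n$. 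The comparisons are needed globally on $\Omega$; on compact subsets they follow from continuity and positivity, and near the boundary from the two expansions.

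\textbf{Part (2).} Part (1) and the Cauchy--Schwarz inequality for the reproducing kernel, $|S^{\rm loc}_\zeta(z,w)|^2\le S^{\rm loc}_\zeta(z,z)S^{\rm loc}_\zeta(w,w)$, give $|S^{\rm loc}_\zeta|\le C$ on $\mathbb B^n\times\mathbb B^n$. This inequality holds because $S^{\rm loc}_\zeta$ is the pullback of a positive definite kernel multiplied by $h(z)\overline{h(w)}$. Since $S^{\rm loc}_\zeta(z,w)$ is holomorphic in $z$ and antiholomorphic in $w$, the Cauchy estimates on polydiscs contained in $\mathbb B^n$ around points of $\mathbb B^n_\delta\times\mathbb B^n_\delta$, of radius depending only on $\delta$, bound every derivative $\partial_z^a\partial_{\bar w}^b S^{\rm loc}_\zeta$ by $C_{a,b}(\delta)$, uniformly in $\zeta$.

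One technical point is that the branch $(\det J_{\mathbb C}\Phi_\zeta)^{n/(n+1)}$ must exist. It does, because $\det J_{\mathbb C}\Phi_\zeta$ is nonvanishing on the simply connected $\mathbb B^n$, and only its modulus enters the estimates.

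\textbf{Main obstacle.} The main difficulty is the uniform two-sided comparison $S_\Omega\approx(\det G_{\operatorname B})^{n/(n+1)}$ up to the boundary. I would derive it from the leading terms of the Boutet de Monvel--Sj\"ostrand expansion, $S_\Omega=\phi\,\varepsilon^{-n}+\psi\log\varepsilon$ with $\phi>0$ on $\partial\Omega$, and the Fefferman expansion, $K_\Omega=\phi_B\,\varepsilon^{-n-1}+\psi_B\log\varepsilon$ with $\phi_B>0$ on $\partial\Omega$. Since $\sigma_{\operatorname F}\approx\sigma_{\operatorname E}$, the leading coefficient of $S_\Omega$ stays positive.
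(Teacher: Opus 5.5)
Your proposal is correct, and its skeleton matches the paper's. Both reduce part (1), via $|\det J_{\mathbb C}\Phi_\zeta|^2\det G_{\operatorname{B}}(\Phi_\zeta)=\det(\Phi_\zeta^*g_{\operatorname{B}}^\Omega)\approx 1$, to the global comparison $S_\Omega\approx(\det G_{\operatorname{B}})^{\frac{n}{n+1}}$ on $\Omega$. Both obtain part (2) from the Cauchy--Schwarz bound plus Cauchy estimates.

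The difference lies in how that comparison is proved. The paper goes through the charts. The transformation formula for $\Phi_\zeta\colon\mathbb B^n\to\Phi_\zeta(\mathbb B^n)$ at the center, together with $\det(\Phi_\zeta^*g_{\operatorname{B}}^\Omega)_0\approx1$, gives $S_{\Phi_\zeta(\mathbb B^n)}(\zeta)\approx(\det g^\Omega_{\operatorname{B},\zeta})^{\frac{n}{n+1}}$. Lemma~\ref{lem:unif_local_bd_1} then transfers this to $S_\Omega(\zeta)$, using Zimmer's localization $K_\Omega(\zeta)\approx K_{\Phi_\zeta(\mathbb B^n)}(\zeta)$ and the Barrett--Lee invariant $SK$ (continuous and positive on $\overline\Omega$, constant on the ball). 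You instead read the comparison off classical boundary asymptotics: $S_\Omega\approx\varepsilon^{-n}$, $K_\Omega\approx\varepsilon^{-n-1}$, and the boundary limit of the Bergman invariant $\det G_{\operatorname{B}}/K_\Omega$.

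Your route is shorter and never needs the Fefferman--Szeg\H o kernels of the image domains $\Phi_\zeta(\mathbb B^n)$. Hence you also avoid the extra requirement (Remark~\ref{ree}) that each $\Phi_\zeta$ be a (T)-biholomorphism onto a smoothly bounded strongly pseudoconvex domain. The paper's route stays closer to Zimmer's intrinsic framework: it in effect gets $K_\Omega\approx\det G_{\operatorname{B}}$ from the localization lemma and the chart normalization, not from the Bergman invariant. It still uses boundary asymptotics implicitly, though, through $SK_\Omega\in C(\overline\Omega)$, which encodes exactly your relation $S_\Omega^{n+1}\approx K_\Omega^n$.

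One phrasing point. Positivity of the leading Boutet de Monvel--Sj\"ostrand coefficient comes from the Fefferman density being smooth and positive, not from $\sigma_{\operatorname{F}}\approx\sigma_{\operatorname{E}}$ as such. What comparability of the measures does give, via the extremal characterization $S_\Omega(z)=\sup\{|f(z)|^2:\|f\|\le1\}$, is that $S_\Omega$ is comparable on the diagonal to the classical Szeg\H o kernel. That already yields $S_\Omega\approx\varepsilon^{-n}$ directly.
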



To prove this lemma, we need the following result.
\begin{lemma}\label{lem:unif_local_bd_1} There exists a constant $C>1$ such that $C^{-1}S_{\Omega}(\zeta)\leq S_{\Phi_\zeta(\mathbb{B}^n)}(\zeta)\leq CS_\Om(\zeta)$.
\end{lemma}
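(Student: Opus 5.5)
The plan is to reduce both sides to explicit powers of the boundary distance $\varepsilon(\zeta)=-\rho(\zeta)$ and check that they match. The key identity comes from the transformation law. The ball $\mathbb B^n$ is simply connected and $\det J_{\mathbb C}\Phi_\zeta$ is nonvanishing, so $(\det J_{\mathbb C}\Phi_\zeta)^{\frac{n}{n+1}}$ has a global holomorphic branch. By Remark \ref{ree}, $\Phi_\zeta$ extends smoothly to $\overline{\mathbb B^n}$. I will interpret the kernel of $\Phi_\zeta(\mathbb B^n)$ through the transformation rule of Proposition \ref{trans:feff_sgo_kers}; that is, I regard $\Phi_\zeta$ as a (T)-biholomorphism onto its image, which is the same convention used to define $S^{\mathrm{loc}}_\zeta$. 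This gives
\[
S_{\Phi_\zeta(\mathbb B^n)}(\zeta)=S_{\mathbb B^n}(0)\,|\det J_{\mathbb C}\Phi_\zeta(0)|^{-\frac{2n}{n+1}} .
\]
Here $S_{\mathbb B^n}(0)$ is a fixed positive constant. So it suffices to show $|\det J_{\mathbb C}\Phi_\zeta(0)|^{-\frac{2n}{n+1}}\approx S_\Omega(\zeta)$ uniformly in $\zeta\in\Omega$.

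\textbf{Step 1: bound the Jacobian via the Bergman metric.} The comparison $\Phi_\zeta^*g_{\operatorname{B}}^\Omega\approx g_{\operatorname{Euc}}$ holds with constants independent of $\zeta$. Evaluating it at $z=0$ and taking determinants gives
\[
|\det J_{\mathbb C}\Phi_\zeta(0)|^{2}\,\det G^{\operatorname{B}}_\Omega(\zeta)\approx 1 ,
\]
where $G^{\operatorname{B}}_\Omega$ is the Bergman metric matrix. On a $C^\infty$ strongly pseudoconvex domain, Fefferman's expansion $K_\Omega\approx\varepsilon^{-(n+1)}$ gives $g_{\operatorname{B}}^\Omega\approx \varepsilon^{-1}g_{\operatorname{Euc}}+\varepsilon^{-2}\partial\rho\,\bar\partial\rho$, exactly as in Lemma \ref{comparison}. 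Its determinant is therefore $\approx \varepsilon^{-(n-1)}\cdot\varepsilon^{-2}=\varepsilon^{-(n+1)}$. It follows that $|\det J_{\mathbb C}\Phi_\zeta(0)|^{-2}\approx\varepsilon(\zeta)^{-(n+1)}$, and hence
\[
|\det J_{\mathbb C}\Phi_\zeta(0)|^{-\frac{2n}{n+1}}\approx\varepsilon(\zeta)^{-n}.
\]

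\textbf{Step 2: size of the Fefferman--Szeg\H o kernel.} The function $h=S_\Omega\varepsilon^n$ of Section 3 is bounded above and below near $\partial\Omega$. This follows from the asymptotic expansion behind Lemma \ref{asymp}, since the leading coefficient is a positive multiple of $J[\rho]^{\frac{n}{n+1}}$ restricted to the boundary. Hence $S_\Omega(\zeta)\approx\varepsilon(\zeta)^{-n}$ near the boundary.

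\textbf{Step 3: compact part and conclusion.} On a compact subset of $\Omega$, both sides are continuous, positive functions of $\zeta$ and are therefore trivially comparable. For the Jacobian side this uses that the charts of Theorem \ref{thm:charts} can be chosen to depend reasonably on $\zeta$; alternatively, Step 1 already gives uniform bounds everywhere. Combining the three steps yields the claimed two-sided estimate.

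The main obstacle is the uniformity in $\zeta$ and the precise meaning of $S_{\Phi_\zeta(\mathbb B^n)}$. If it is meant as the genuine Fefferman--Szeg\H o kernel of the image domain, one must know that the image is strongly pseudoconvex and that $\Phi_\zeta$ satisfies Condition (T) up to the boundary. I would rely on Remark \ref{ree} for this, and on the fact that the comparison constants in Theorem \ref{thm:charts} are uniform, so that no $\zeta$-dependent constant enters Step 1.
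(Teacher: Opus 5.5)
Your argument is correct, but it takes a different route from the paper's. The paper never looks at the Jacobian or at blow-up rates. It quotes Zimmer's Lemma 9.2, $K_\Omega(\zeta)\le K_{\Phi_\zeta(\mathbb B^n)}(\zeta)\le c_1K_\Omega(\zeta)$. It then converts Bergman kernels into Fefferman--Szeg\H o kernels using the Barrett--Lee invariant $SK=S^{n+1}/K^n$. This invariant is constant on the ball, which gives $S_{\Phi_\zeta(\mathbb B^n)}(\zeta)^{n+1}=c\,K_{\Phi_\zeta(\mathbb B^n)}(\zeta)^n$. It is also continuous and positive on $\overline\Omega$, which gives $S_\Omega^{n+1}\approx K_\Omega^n$.

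You replace these two ingredients as follows:
\begin{enumerate}
\item[(a)] In place of Zimmer's kernel comparison, you use the elementary identity $|\det J_{\mathbb C}\Phi_\zeta(0)|^2\det G^{\operatorname{B}}_\Omega(\zeta)=\det(\Phi_\zeta^*g^\Omega_{\operatorname{B}})_0\approx 1$.
\item[(b)] In place of the $SK$ invariant, you use the explicit rates $\det G^{\operatorname{B}}_\Omega\approx\varepsilon^{-(n+1)}$ and $S_\Omega\approx\varepsilon^{-n}$. These come from the Fefferman and Boutet de Monvel--Sj\"ostrand expansions. For the first rate you could equally combine $g^\Omega_{\operatorname{B}}\approx g^\Omega_{\operatorname{FS}}$ with Lemma \ref{comparison}.
\end{enumerate}

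The two proofs are close at heart. Two-sided boundedness of $SK_\Omega$ already follows from those leading terms. Both proofs also treat $\Phi_\zeta$ as a (T)-biholomorphism onto a strongly pseudoconvex image; the paper assumes this implicitly as well, through $SK$-invariance here and the transformation formula in Lemma \ref{local_kernel}.

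What each route buys:
\begin{enumerate}
\item[(i)] Yours is more self-contained, given the asymptotics already used in Section 3.
\item[(ii)] Yours yields directly the comparison $S_\Omega\approx(\det G^{\operatorname{B}}_\Omega)^{n/(n+1)}$ needed in the proof of Lemma \ref{local_kernel}. The paper instead deduces that comparison from the present lemma, so your route reverses and shortens the logic.
\item[(iii)] The paper's route fits Zimmer's general framework. It needs only a ratio statement about $SK_\Omega$, not the individual blow-up rates.
\end{enumerate}

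One small correction: drop the first suggestion in your Step 3. The charts $\zeta\mapsto\Phi_\zeta$ are not chosen continuously, so $\zeta\mapsto S_{\Phi_\zeta(\mathbb B^n)}(\zeta)$ need not be continuous. Your alternative is the right one. The comparisons $\det G^{\operatorname{B}}_\Omega\approx\varepsilon^{-(n+1)}$ and $S_\Omega\approx\varepsilon^{-n}$ hold on all of $\Omega$, since both sides are continuous and positive on compact sets. The chart estimate is uniform in $\zeta$. Together these handle every $\zeta\in\Omega$ at once.
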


\begin{proof}
Let $K_D(z)$ denotes the diagonal values of the Bergman kernel $K_D(z,w)$ for domain $D\subset\mathbb C^n$. From \cite[Lemma 9.2]{za21}, there exists a constant $c_{1}>1$ such that for all $\zeta\in \Omega$,
\begin{equation}\label{first}
    K_{\Omega}(\zeta) \leq K_{\Phi_\zeta(\mathbb{B}^n)}(\zeta) \leq c_1K_{\Omega}(\zeta).
\end{equation}
 Then consider the $SK_\Om$ function:
\begin{equation}\label{second}
    SK_{\Omega}(z,w)=\frac{S_{\Omega}^{n+1}(z,w)}{K_{\Omega}^{n}(z,w)}\in C(\overline{\Om}),
\end{equation}
see \cite[Theorem 2]{bl14} and \cite{bl16}. Combining (\ref{first}) and (\ref{second}), we have 
\begin{equation}
    S_{\Omega}^{\frac{n+1}{n}}( \zeta)SK_{\Omega}^{-\frac{1}{n}}( \zeta,\zeta) \leq S_{\Phi_\zeta(\mathbb{B}^n)}^{\frac{n+1}{n}}(\zeta)SK_{\Phi_\zeta(\mathbb{B}^n)}^{-\frac{1}{n}}(\zeta,\zeta) \leq c_1S_{\Omega}^{\frac{n+1}{n}}( \zeta)SK_{\Omega}^{-\frac{1}{n}}(\zeta,\zeta).
\end{equation}
Since $SK$ is invariant (see \cite[Theorem 1]{bl14}), and $SK_{\mathbb{B}^n}$ is identically constant (depending only on $n$) see \cite[p. 10]{bl14}, 
\begin{equation}
    S_{\Omega}^{\frac{n+1}{n}}(\zeta)\leq c_{2}S_{\Phi_\zeta(\mathbb{B}^n)}^{\frac{n+1}{n}}(\zeta)SK_{\Omega}^{\frac{1}{n}}( \zeta,\zeta)\leq c_1S_{\Omega}^{\frac{n+1}{n}}(\zeta).
\end{equation}
 It implies that
$$ \frac{S_{\Omega}(\zeta)}{c_{2}SK_{\Omega}^{\frac{1}{n+1}}(\zeta,\zeta)}\leq S_{\Phi_\zeta(\mathbb{B}^n)}(\zeta)\leq \frac{c_1S_{\Omega}(\zeta)}{c_{2}SK_{\Omega}^{\frac{1}{n+1}}(\zeta,\zeta)}.$$
By $SK_\Om\in C(\overline{\Omega})$ and $SK_\Om(\zeta,\zeta)>0$ on $\overline\Omega$, there exists a constant $C>1$ such that
$$C^{-1}S_{\Omega}(\zeta)\leq S_{\Phi_\zeta(\mathbb{B}^n)}(\zeta)\leq CS_{\Omega}(\zeta),$$
which completes the proof.
\end{proof}
We now prove Lemma \ref{local_kernel}.  
In the following proof, for each $z\in\Omega$, we identify the Bergman metric
$g_{\operatorname{B},z}^{\Omega}$ with the positive definite Hermitian matrix $\left[
g_{\operatorname{B},z}^{\Omega}
\!\left(
\partial_{i},
\partial_{\bar{j}}
\right)
\right]_{i,j=1}^n .$
\begin{proof}[Proof of Lemma \ref{local_kernel}.]
For fixed $\zeta\in\Om$, we have
\begin{align*}
S_{\zeta}^{\mathrm{loc}}(w,w)
&=
S_{\Omega}\bigl(\Phi_\zeta(w)\bigr)
\abs{\det J_\mathbb C\Phi_\zeta(w)}^{\frac{2n}{n+1}} \\
&=
S_{\Omega}\bigl(\Phi_\zeta(w)\bigr)
\left(
\frac{
\det\bigl(\Phi_\zeta^*g_{\mathrm B}^{\Omega}\bigr)_w
}{
\det g_{\mathrm B,\Phi_\zeta(w)}^{\Omega}
}
\right)^{\frac{n}{n+1}} 
\approx
\frac{
S_{\Omega}\bigl(\Phi_\zeta(w)\bigr)
}{
\bigl(\det g_{\mathrm B,\Phi_\zeta(w)}^{\Omega}\bigr)^{\frac{n}{n+1}}
},
\end{align*}
where the last comparability follows from Theorem~\ref{thm:bergman_reduction}. 

Hence it suffices to show that there exists a constant \(C>1\) such that \[C^{-1}\leq \frac{S_\Om(z)}{(\det g_{\operatorname{B},z}^\Om)^{\frac{n}{n+1}}}\leq C\] for all \(z\in\Omega\).
 Since $\Phi_\zeta:~\mathbb{B}^n\to \Phi_\zeta(\mathbb{B}^n)$ is biholomorphic and $\Phi_\zeta(0)=\zeta$, using the transformation formula, we have
\begin{align}
\frac{(n-1)!}{\pi^n}=S_{\mathbb{B}^n}(0)=S_{\Phi_\zeta(\mathbb{B}^n)}(\zeta) \abs{\det J_{\mathbb C}\Phi_\zeta(0)}^{\frac{2n}{n+1}}
=S_{\Phi_\zeta(\mathbb{B}^n)}(\zeta) \left(\frac{\det \big(\Phi_\zeta^*g_{\operatorname{B}}^\Om\big)_0 }{ \det g_{\operatorname{B},\zeta}^\Om}\right)^{\frac{n}{n+1}}.
\end{align}
The first part now follows from
Lemma~\ref{lem:unif_local_bd_1} and
Theorem~\ref{thm:bergman_reduction}.

Next, for all $z,w\in\mathbb B^n$, 
\begin{multline*}
\abs{S_\zeta^{\operatorname{loc}} (z,w)}
=\abs{S_{\Omega}\big(\Phi_{\zeta}(z),\Phi_{\zeta}(w)\big)\left(\det J_{\mathbb C}\Phi_{\zeta}(z)\right)^{\frac{n}{n+1}}\left(\overline{\det J_{\mathbb C}\Phi_{\zeta}(w)}\right)^{\frac{n}{n+1}}}\\
\le\sqrt{S_{\Omega}\big(\Phi_{\zeta}(z)\big)\abs{\det J_{\mathbb C}\Phi_{\zeta}(z)}^{\frac{2n}{n+1}}}\sqrt{S_{\Omega}\big(\Phi_{\zeta}(w)\big)\abs{\det J_{\mathbb C}\Phi_{\zeta}(w)}^{\frac{2n}{n+1}}}\\
\leq\sqrt{S_\zeta^{\operatorname{loc}}(z,z)}\sqrt{S_\zeta^{\operatorname{loc}}(w,w)} \leq C,
\end{multline*}
 where the last inequalty follows from the first part. Since $S_\zeta^{\operatorname{loc}}$ is holomorphic in the first variable and anti-holomorphic in the second variable, the derivative estimates follow
from Cauchy's integral formulas. This  completed the second part of the proof.
\end{proof}

We now continue the proof of Theorem \ref{main:rigidity}. Recall that $g_{\operatorname{FS}}^\Om$ is complete (see \cite[Theorem 6]{bl14}) and it satisfies Property \ref{item:SBG}) of Definition \ref{defn:BBG} (see Remark \ref{b2_feffsgo}).
From \cite{bl14}, it follows that 
\begin{equation}
    \frac{1}{C}g_{\operatorname{B}}^\Om\le g_{\operatorname{FS}}^\Omega\le C g_{\operatorname{B}}^\Om
\end{equation}
which $C$ depends only on $\Omega$. Using the fact $\Phi_{\zeta}^* g_{\operatorname{B}}^\Om \approx g_{\operatorname{{Euc}}}$, we know 
\begin{equation}\label{pull back}
     \Phi_\zeta^* g_{\operatorname{FS}}^\Omega\approx g_{\operatorname{Euc}}. 
\end{equation}

Furthermore, since $w \longmapsto\det J_{\mathbb C}\Phi_\zeta(w)^{\frac{n}{n+1}}$ is holomorphic on $\mathbb{B}^n$, so we have
\begin{align*}
\Phi_\zeta^* g_{\operatorname{FS}}^\Omega(w)=g_{\operatorname{FS}}^\Om(\Phi_\zeta(w)) 
&=\partial_{i\overline{j}} \log S_\Omega(\Phi_\zeta(w))dw^i d\bar{w}^j=\partial_{i\overline{j}} \log S_\zeta^{\operatorname{loc}}(w,w)dw^i d\bar{w}^j.
\end{align*}
In local coordinates, the curvature tensor of the K\"ahler metric $\mathfrak g := \Phi_{\zeta}^{*} g_{\operatorname{FS}}^\Omega$ is given by
\begin{equation}\label{equ1}
    R(\mathfrak g)_{i\bar{j}k\bar{\ell}} = \partial_{i\bar{j} k \bar{\ell}}\big(  \log S_\zeta^{\operatorname{loc}}\big) - \mathfrak g^{p\bar{q}} \Big(\partial_{ik\bar{p}}  \big(\log S_\zeta^{\operatorname{loc}}\big) \partial_{\bar{j}\bar{\ell}q} \big( \log S_\zeta^{\operatorname{loc}}\big)\Big).
\end{equation}
The components of the covariant derivatives $\nabla^{p} \operatorname{Rm}(\mathfrak g)$ can be expressed in terms of higher-order derivatives of $\log S_\zeta^{\operatorname{loc}}$, the components of $\mathfrak g$, and its inverse. Then by Lemma \ref{local_kernel} together with the fact that $\mathfrak g\approx g_{\operatorname{Euc}}$, we obtain
\begin{equation}
    \| \nabla^{p} \operatorname{Rm}(\mathfrak g) \|_{\mathfrak g}(0) \leq C_{p}.
\end{equation}
Since $\Phi_{\zeta}^{*} g_{\operatorname{FS}} ^\Omega= \mathfrak g$, then $\Phi_{\zeta}^{*} \left(\nabla^{p} \operatorname{Rm}\big(g_{\operatorname{FS}}^\Omega\big)\right) = \nabla^{p} \operatorname{Rm}(\mathfrak g)$. It implies that
\begin{equation}
   \| \nabla^{p} \operatorname{Rm}(\mathfrak g) \|_{\mathfrak g}(0) = \| \nabla^{p} \operatorname{Rm}\big(g_{\operatorname{FS}}^\Omega\big) \|_{g_{\operatorname{FS}}^\Omega} (\zeta). 
\end{equation}
Thus for any $p\ge 0$ and $\zeta\in\Om$ we have proved
\begin{equation}\label{qqu}
    \sup_{z \in \Omega} \| \nabla^{p} \operatorname{Rm}\big(g_{\operatorname{FS}}^\Om\big) \|_{g_{\operatorname{FS}}^\Omega} \leq C_{p} < +\infty,
\end{equation}

Finally, $g_{\operatorname{FS}}^\Om$ has bounded sectional curvature by (\ref{qqu}) (also see Theorem 1.1 of \cite{b26}). Then [\cite{LSY05}, Proposition 2.1] with (\ref{pull back}) implies that $g_{\operatorname{FS}}^\Om$ has positive injectivity radius. Hence, by the Definition \ref{def gem}, the Fefferman-Szeg\H o metric on the strongly pseudoconvex domian with smooth boundary has $C^\infty$-bounded geometry. The first part of Theorem \ref{main:rigidity} is proved.
\end{proof}

\subsection{Proof of Theorem \ref{main:rigidity} (ii)}
To prove Theorem \ref{main:rigidity} (ii) and (iii), we begin by recalling two key prior results: the boundary behaviour of the Ricci and scalar curvatures of the Fefferman-Szeg\H{o} metric in \cite{b26}, and the analogous of Cheng's conjecture for the same metric in \cite{y25} as follows.
\begin{proposition}\label{equa f}
    Let $\Omega\subset\mathbb{C}^{n}$ be a a bounded pseudoconvex domain and let  $p\in\partial\Omega$ be a $C^{\infty}$-smooth strongly pseudoconvex point. Then 
         $$\lim_{z\to p}\left|\operatorname{Ric}(g_{\operatorname{FS}})+\frac{n+1}{n}g_{\operatorname{FS}}\right|(z)=0, \quad \text{and} \quad 
        \lim_{z\to p}\mathrm{Sca}(g_{\operatorname{FS}})=-(n+1).$$
\end{proposition}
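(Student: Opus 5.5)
This is a local statement at a single boundary point, so the plan is to localize, rescale, and pass to a limit model. The localization uses the Fefferman–Szegő kernel of $\Omega$, whose diagonal behaves near $p$ like that of a small strongly pseudoconvex domain $D\subset\Omega$ with $\partial D\cap\partial\Omega$ a neighbourhood of $p$.

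\textbf{Step 1: localization.} Because $\sigma_{\operatorname{F}}$ is comparable to $\sigma_{\operatorname{E}}$ and defined by a local formula, I would show that
\[
S_\Omega(z,w)=S_D(z,w)\,(1+o(1))
\]
for $z,w$ near $p$, together with all derivatives in the rescaled sense below. This is the Szegő analogue of the classical Bergman localization (Hörmander, Engliš). I would argue it through a $\bar\partial_b$ or Kerzman–Stein type comparison, or simply invoke the Boutet de Monvel–Sjöstrand local description of $S_\Omega$ near a strongly pseudoconvex point. That description gives
\[
S_\Omega(z,z)=\frac{\phi(z)}{(-\rho(z))^{n}}+\psi(z)\log(-\rho(z)),
\]
where $\phi$ and $\psi$ are smooth near $p$ and $\phi(p)>0$. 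Only local $C^\infty$ strong pseudoconvexity at $p$ is needed for this.

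\textbf{Step 2: scaling.} I would take $z_k\to p$ and apply Pinchuk scaling maps $A_k$, built from affine maps and a local biholomorphic change normalizing $\partial\Omega$ to second order at $p$. These send $\Omega$ near $p$ to domains $\Omega_k$ converging in $C^\infty_{\mathrm{loc}}$ to the Siegel domain $\mathcal S=\{\operatorname{Re}w_1+|w'|^2<0\}$, which is biholomorphic to $\mathbb B^n$. By Proposition~\ref{trans:feff_sgo_kers} (applied locally, since the metric depends only on $\partial\bar\partial\log S$ and the Jacobian factor is pluriharmonic), $g_{\operatorname{FS}}$ is transported to $\partial\bar\partial\log S_k$, where $S_k$ is the transformed kernel. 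The expansion from Step 1 shows that $\log S_k$ converges in $C^\infty_{\mathrm{loc}}(\mathcal S)$ to $-n\log(-r)+\text{const}$, where $r=\operatorname{Re}w_1+|w'|^2$.

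The smooth term $\phi$ converges to the constant $\phi(p)$. The log term contributes $(-\rho)^{n}\log(-\rho)$ relative to the main term, and this vanishes in the limit for $n\ge1$ after scaling, with all derivatives, since derivatives cost only powers of the scale factor that are compensated. Hence the metrics converge smoothly to
\[
n\,\partial\bar\partial\bigl(-\log(-r)\bigr),
\]
which is $\frac{n}{n+1}$ times the Bergman metric of the ball. This limit is Kähler–Einstein with $\operatorname{Ric}=-\frac{n+1}{n}g$, and its scalar curvature is $n\cdot\bigl(-\frac{n+1}{n}\bigr)=-(n+1)$.

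\textbf{Step 3: conclusion.} The pointwise norm $\bigl|\operatorname{Ric}+\frac{n+1}{n}g\bigr|$ and $\mathrm{Sca}$ are biholomorphic invariants computed from finitely many derivatives of the potential. Smooth convergence at the base point $A_k(z_k)$, which I would fix at $(-1,0)$, therefore gives both limits along every sequence.

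\textbf{Main obstacle.} The hard step is justifying $C^\infty$ convergence of the scaled potentials, that is, controlling derivatives of the error terms uniformly after scaling. I would handle it by writing the expansion in terms of $\rho$ and checking that each derivative of $\phi$ and of $(-\rho)^n\psi\log(-\rho)$ stays bounded under the anisotropic scaling. If $n=1$ needed special care it does not matter here, since $n\ge2$.
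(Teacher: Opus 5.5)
The paper gives no proof of this proposition. It is quoted from \cite{b26} and is applied only to $C^\infty$-smooth bounded strongly pseudoconvex domains, in the proof of Theorem~\ref{main:rigidity}(iii). In that setting your scaling argument is a correct, self-contained proof.

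There the Boutet de Monvel--Sj\"ostrand expansion $S_\Omega=\phi(-\rho)^{-n}+\psi\log(-\rho)$, with $\phi,\psi\in C^\infty(\overline\Omega)$ and $\phi|_{\partial\Omega}>0$, holds globally, because $\sigma_{\operatorname{F}}$ is a smooth positive density on $\partial\Omega$. The paper itself uses this expansion in Sections~5 and~6. Write $-\rho\circ A_k^{-1}=\epsilon_k(-\rho_k)$ with $\rho_k\to r$ in $C^\infty_{\mathrm{loc}}$. Then
\[
\log S_\Omega\circ A_k^{-1}=-n\log(-\rho_k)-n\log\epsilon_k+\log\Bigl(\phi\circ A_k^{-1}+\epsilon_k^{n}\,(\psi\circ A_k^{-1})\,(-\rho_k)^{n}\bigl(\log\epsilon_k+\log(-\rho_k)\bigr)\Bigr),
\]
and the last term tends to $\log\phi(p)$ in $C^\infty_{\mathrm{loc}}$, exactly as in your Step~2. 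The limit metric $n\,\partial\bar\partial(-\log(-r))=\frac{n}{n+1}g_{\operatorname{B}}$ has the stated Ricci and scalar curvature.

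Compare this with a direct computation from the expansion, for instance through the identity $\operatorname{Ric}+\frac{n+1}{n}g_{\operatorname{FS}}=-\partial\bar\partial\log\mathfrak R_\Omega$ from the proof of Theorem~\ref{main:rigidity}(iii). What scaling buys you is that the anisotropy of $g_{\operatorname{FS}}$ is handled automatically. For $n=2$ the fourth normal derivatives of $\psi\rho^2\log(-\rho)$ grow like $\rho^{-2}$, and they become harmless only once the $g_{\operatorname{FS}}$-weights are accounted for.

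A small point: you do not need Proposition~\ref{trans:feff_sgo_kers}. It does not apply anyway, since the $A_k$ are only defined on a piece of $\Omega$ and are not (T)-biholomorphisms between domains. The pull-back of $\partial\bar\partial\log S_\Omega$ is simply $\partial\bar\partial(\log S_\Omega\circ A_k^{-1})$ by the chain rule.

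The genuine gap concerns the generality as literally stated, namely an arbitrary bounded pseudoconvex $\Omega$ with only $p$ smooth and strongly pseudoconvex. Your sentence ``only local $C^\infty$ strong pseudoconvexity at $p$ is needed'' is asserted, not proved, and it is exactly the step such a proof would have to supply. Bergman kernels localize at strongly pseudoconvex points by H\"ormander-type $L^2$ arguments, but the Szeg\H{o} projection is a boundary object. Localizing it near $p$ needs global input on $\partial\Omega$, for example closed range of $\bar\partial_b$, together with a microlocal parametrix near $p$. Moreover, $\sigma_{\operatorname{F}}$ and the Fefferman--Hardy space already presuppose regularity of the whole boundary, and the Fefferman density degenerates at weakly pseudoconvex points.

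So your argument proves the proposition for $C^\infty$-smooth bounded strongly pseudoconvex domains, which is all the paper uses. Step~1 as written does not establish the localized version.
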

\begin{remark}\label{neg_pin_feff-sgo-metric}
    The Ricci curvature of the Fefferman-Szeg\H{o} metric $g_{\operatorname{FS}}$ is negatively pinched near the boundary, i.e., there exists a compact subset $K \subset \subset \Omega$ and some constants $C_2 \geq C_1 >0$ such that $ -C_2 g_{\operatorname{FS}}\leq \operatorname{Ric}(g_{\operatorname{FS}})\leq -C_1 g_{\operatorname{FS}}<0$ outside $K$ .
\end{remark}
\begin{theorem}\label{Yuan}
    The Fefferman-Szeg\H o metric metric of a bounded strongly pseudoconvex domain $\Omega$ with $C^\infty$-boundary is K\"ahler-Einstein if and only if the domain is biholomorphic to the ball.
\end{theorem}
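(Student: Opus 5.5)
We handle the two directions separately. The substance is in showing that Kähler–Einstein forces the ball.

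\emph{The ball gives a Kähler–Einstein metric.} On $\mathbb B^n$ the Fefferman measure is a constant multiple of $\sigma_{\operatorname{E}}$, and the Fefferman–Szeg\H o kernel is a constant multiple of $(1-\langle z,w\rangle)^{-n}$. Hence $g_{\operatorname{FS}}^{\mathbb B^n}=n\,\partial\bar\partial(-\log(1-|z|^2))$, which is Kähler–Einstein with $\lambda=-(n+1)/n$.

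Now let $F:\Omega\to\mathbb B^n$ be any biholomorphism. By Fefferman's extension theorem, $F$ extends to a $C^\infty$ diffeomorphism $\overline\Omega\to\overline{\mathbb B^n}$. Since $\Omega$ is biholomorphic to the ball, it is simply connected, so $(\det J_{\mathbb C}F)^{n/(n+1)}$ has a global holomorphic branch. Thus $F$ is a (T)-biholomorphism. By Proposition~\ref{trans:feff_sgo_kers}, $g_{\operatorname{FS}}^\Omega=F^*g_{\operatorname{FS}}^{\mathbb B^n}$, so $g_{\operatorname{FS}}^\Omega$ is Kähler–Einstein.

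\emph{Kähler–Einstein forces $\mathfrak R_\Omega$ to be constant.} Assume $g_{\operatorname{FS}}^\Omega$ is Kähler–Einstein. By Proposition~\ref{equa f}, the Einstein constant must be $\lambda=-(n+1)/n$. Then
\[
\operatorname{Ric}+\tfrac{n+1}{n}g_{\operatorname{FS}}=-\partial\bar\partial\log\mathfrak R_\Omega=0,
\]
so $\log\mathfrak R_\Omega$ is pluriharmonic on $\Omega$. From the Fefferman-type asymptotic expansion of $S_\Omega$ (the leading term is $c\,r^{-n}$ with $r$ a Fefferman defining function), $\mathfrak R_\Omega$ extends continuously to $\overline\Omega$ with boundary value $C_n$. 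The maximum and minimum principles for pluriharmonic functions then give $\mathfrak R_\Omega\equiv C_n$.

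\emph{Identifying the Cheng–Yau solution.} Set $u=-c\,S_\Omega^{-1/n}$. The identity
\[
J[-e^{-\psi}]=e^{-(n+1)\psi}\det(\psi_{i\bar j}),
\]
applied with $\psi=\tfrac1n\log S_\Omega+\text{const}$, turns $\mathfrak R_\Omega\equiv C_n$ into Fefferman's Monge–Ampère equation $J[u]=1$ with $u=0$ on $\partial\Omega$ (for a suitable choice of $c$). By uniqueness, $u$ is the Cheng–Yau solution. Consequently $S_\Omega=c'(-u)^{-n}$.

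\emph{From $S_\Omega=c'(-u)^{-n}$ to the ball.} We compare the two boundary expansions:
\begin{itemize}
\item Boutet de Monvel–Sjöstrand for the Szeg\H o kernel: $S_\Omega=\phi\,r^{-n}+\psi\log r$.
\item Lee–Melrose for the Cheng–Yau solution: $u=r\bigl(\sum\eta_k (r^{n+1}\log r)^k\bigr)$.
\end{itemize}
Substituting the second into $S_\Omega=c'(-u)^{-n}$ and matching coefficients, the logarithmic coefficient $\psi$ of $S_\Omega$ is forced to be determined by the Cheng–Yau logarithmic terms. We aim to show that both obstructions vanish to infinite order. The natural route is to exploit that the expansions have incompatible powers: $S_\Omega$ carries $\log r$ at order $r^0$, while $(-u)^{-n}$ carries it only at order $r^{1}\log r$ relative to $r^{-n}$.

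From infinite-order vanishing we would get local sphericity of $\partial\Omega$:
\begin{itemize}
\item for $n=2$, by Graham's result or by the $n=2$ case of Conjecture~\ref{conj:szego};
\item in general, following Huang–Xiao: use the orthonormal-basis embedding \eqref{ser-exp-sgoker} together with the relation $\log S_\Omega=-n\log(-u)$. This makes the boundary CR-map into a sphere in $\ell^2$, then forces finite rank and sphericity.
\end{itemize}
Finally, a smoothly bounded strongly pseudoconvex domain with spherical boundary that carries a complete Kähler–Einstein metric of this type is biholomorphic to the ball (Nemirovski–Shafikov / Huang–Xiao). The step from $S_\Omega=c'(-u)^{-n}$ to sphericity in dimensions $n\ge3$ is the main obstacle. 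The first attempt there would be to adapt Huang–Xiao's Bergman-kernel argument verbatim with $K_\Omega$ replaced by $S_\Omega^{(n+1)/n}$.
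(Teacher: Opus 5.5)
The paper gives no proof of this statement. It quotes it from \cite{y25}, whose argument, per the introduction, obtains the vanishing of the logarithmic term via Fu--Wong's lemma \cite{fw97}. Judged on its own, your proposal has a genuine gap.

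Everything up to $S_\Omega=c'(-u)^{-n}$ is sound:
\begin{itemize}
\item the ball direction;
\item $\lambda=-(n+1)/n$ from Proposition~\ref{equa f};
\item $\mathfrak R_\Omega\equiv C_n$ by pluriharmonicity and the boundary value;
\item the identification of $-cS_\Omega^{-1/n}$ with the Cheng--Yau solution.
\end{itemize}
The remaining step carries all the content, and you do not carry it out. The mechanism you propose for it also does not work. Inserting the Lee--Melrose expansion gives
\[
c'(-u)^{-n}=c'\eta_0^{-n}r^{-n}-nc'\eta_0^{-n-1}\eta_1\,r\log r+O\bigl(r^{n+2}\log^2 r\bigr).
\]
Matching with $\phi r^{-n}+\psi\log r$ therefore only yields $\psi=-nc'\eta_0^{-n-1}\eta_1\,r$ modulo $O(r^\infty)$, together with the vanishing of the $\log^k r$ coefficients for $k\ge 2$. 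There is no incompatibility of powers. The function $\psi$ is allowed to vanish on $\partial\Omega$, and the matching merely ties $\psi$ to the obstruction coefficient $\eta_1$ without forcing either to vanish. So you obtain no infinite-order vanishing, even $n=2$ is not settled, and for $n\ge 3$ you explicitly leave sphericity open.

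\emph{The case $n\ge 3$.} The missing ingredient is the Hirachi--Komatsu--Nakazawa expansion \eqref{asym:phi_n_ge_3}, which makes log-vanishing unnecessary here. Take $\rho$ to be a Fefferman defining function. Then $\eta_0=1+O(\rho^{n+1})$, so $\rho^nS_\Omega=c'\bigl(1+O(\rho^{n+1}|\log\rho|)\bigr)$. Comparing with $\rho^nS_\Omega=\phi+\psi\rho^n\log\rho$ gives $\phi=\frac{(n-1)!}{\pi^n}+O(\rho^n)$. Hence $A^0_{2\bar 2}\equiv 0$ on $\partial\Omega$, and $\partial\Omega$ is locally spherical. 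This is the same ingredient the paper uses in Section 5, where $\mathfrak R_\Omega\equiv C_n$ forces $q|_{\partial\Omega}=0$ (there only for $n\ge4$).

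\emph{The case $n=2$.} Before \cite{hkn93} applies, you need a genuine argument that $\psi$ vanishes to infinite order. One option is Fu--Wong's lemma, as in \cite{y25}. Another is to use the absent $\log^2 r$ terms, which force $\eta_2=\frac{n+1}{2}\eta_1^2/\eta_0$ on $\partial\Omega$. Compare this with the relation between $\eta_2$ and $\eta_1^2$ imposed by $J[u]=1$ to get $\eta_1|_{\partial\Omega}=0$, and then apply Graham's theorem.

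\emph{The final step.} ``Spherical boundary $\Rightarrow$ ball'' is not automatic when $\Omega$ is not simply connected. The Nemirovski--Shafikov and Huang--Xiao arguments you cite are written for the Bergman kernel. You would have to rerun them, for example Huang--Xiao's Kerner-type extension, with $S_\Omega$ in place of $K_\Omega$. Only in the simply connected case does \cite{CJ96} finish directly.
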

Next, we recall a general result which characterises K\"ahler-Einstein metric among K\"ahler-Ricci solitons given by Sha \cite{s26a}.
\begin{theorem}\label{MT1}
Let $\Omega \subset \mathbb{C}^n$ be a bounded pseudoconvex domain with $C^2$ boundary, and let $g$ be a complete K\"ahler metric on $\Omega$ of $C^1$-bounded geometry. Suppose there exists a compact subset $K \subset \subset \Omega$ such that $0> -C_1 g\geq \operatorname{Ric}(g)\geq -C_2 g$ outside $K$ for some constants $C_2 \geq C_1 >0$. If $g$ is a K\"ahler-Ricci soliton with a real holomorphic vector field $X$ such that the dual 1-form of $X$ with respect to $g$ is closed, then $g$ is K\"ahler-Einstein.
\end{theorem}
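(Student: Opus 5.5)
The statement is Sha's theorem \cite[Theorem 1.1]{s26a}, which the paper quotes rather than proves. The plan below is how I would argue it; the central Bochner and maximum-principle step is only sketched and may need a different choice of test function. The idea is to turn the soliton into a quasi-gradient soliton, prove that the potential has bounded Hessian and bounded gradient, and then use a maximum principle at infinity to force $\mathcal L_Xg=0$.

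\textbf{Step 1: reduce to a potential.} Since $X$ is real holomorphic and its dual $1$-form $\theta=X^\flat$ is closed, on the universal cover $\widetilde\Omega$ (or on any simply connected open set) we can write $\theta=\tfrac12 df$ for a smooth real function $f$. Then $X=\tfrac12\nabla f$, and holomorphy of $X$ gives $\nabla_i\nabla_jf=0$. The soliton equation becomes
\[
\operatorname{Ric}_{i\bar j}+\partial_{i\bar j}f=\lambda g_{i\bar j},
\]
where $\partial_{i\bar j}f$ is globally defined on $\Omega$ even if $f$ is not. Taking traces gives $\mathrm{Sca}+\Delta f=n\lambda$.

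\textbf{Step 2: identify $\lambda$ and bound the Hessian.} The standard Hamilton-type identity gives $\nabla\bigl(\mathrm{Sca}+|\partial f|^2-\lambda f\bigr)=0$. Its ingredients are the contracted Bianchi identity and the Ricci identity $\nabla_{\bar j}\Delta f=\nabla_{\bar j}\mathrm{Sca}$ type relation coming from Step 1. $C^1$-bounded geometry makes $\operatorname{Ric}$ and $\nabla\operatorname{Ric}$ bounded, so $\partial\bar\partial f=\lambda g-\operatorname{Ric}$ and its first covariant derivative are bounded in $g$-norm. Outside $K$ the pinching $-C_2g\le\operatorname{Ric}\le -C_1g$ then gives
\[
(\lambda+C_1)g\le\partial\bar\partial f\le(\lambda+C_2)g .
\]

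\textbf{Step 3: show $|X|_g$ is bounded.} Here I use that $\Omega$ is a bounded domain. A holomorphic vector field that grows too fast in $g$-norm is incompatible with the metric being comparable to the Bergman-type complete metrics near the $C^2$ boundary. Concretely, from $\nabla^{1,0}X^{1,0}=\partial\bar\partial f$ bounded, $|X|_g$ grows at most linearly in the $g$-distance. I would then try to improve this to boundedness. One option is a Cauchy estimate in the holomorphic charts $\Phi_\zeta$ of Theorem \ref{thm:charts}, where $\Phi_\zeta^*g\approx g_{\operatorname{Euc}}$. The other is the boundary exhaustion $-\log(-\rho)$.

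\textbf{Step 4: maximum principle.} Once $|X|^2=|\partial f|^2$ is bounded, I would apply Bochner's formula for the holomorphic field $X^{1,0}$:
\[
\Delta|X|^2=|\nabla X^{1,0}|^2-\operatorname{Ric}(X,\bar X).
\]
Since $\operatorname{Ric}\le -C_1g$ outside $K$, the function $|X|^2$ is subharmonic there with a good sign. The Omori--Yau maximum principle is available because the curvature is bounded and $g$ is complete. Applied together with the identity of Step 2, it should give $\sup|X|^2$ attained or approached with $|\nabla X^{1,0}|^2\to0$ and $\operatorname{Ric}(X,\bar X)\to0$. Pinching then forces $|X|\to0$ along such a sequence, so $X\equiv0$ outside $K$, and by analyticity $X\equiv0$. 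Hence $\operatorname{Ric}=\lambda g$, so $g$ is Kähler--Einstein.

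\textbf{Main obstacle.} The hardest step is Step 3: getting a global bound on $|X|$, or at least enough control for the Omori--Yau argument, when $f$ is only multivalued. I would first try to bound $X$ on each chart $\Phi_\zeta(\mathbb B^n)$ by Cauchy estimates applied to the bounded holomorphic coefficients of $X^{1,0}$ in Euclidean coordinates, using that $\Omega$ is bounded. Failing that, I would work directly with the single-valued quantity $\mathrm{Sca}+|\partial f|^2$ in the maximum principle.
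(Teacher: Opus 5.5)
The paper gives no proof of this statement. It is Sha's theorem \cite[Theorem 1.1]{s26a}, quoted without proof, so there is nothing internal to compare with. Judged on its own, your outline has two genuine gaps, one in Step 3 and one in Step 4.

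\emph{Step 3.} The tools you propose cannot bound $|X|_g$.
The Hessian bound only gives $|X|_g\le C(1+d_g)$. Cauchy estimates in the charts $\Phi_\zeta$ control derivatives by the supremum, not the supremum by derivatives. Bounded Euclidean coefficients of $X^{1,0}$ would not give bounded $|X|_g$ anyway, because $g$ blows up at $\partial\Omega$ (think of $\partial_{z_1}$ on $\mathbb B^n$).

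The missing ingredient is the second soliton identity. Write the soliton in Riemannian form $\operatorname{Ric}+\nabla^2 f=\lambda g$ with $\nabla f=2X$, and let $s$ be the Riemannian scalar curvature. Three facts combine: the trace $s+\Delta f=2n\lambda$, the contracted Bianchi identity $\operatorname{div}\operatorname{Ric}=\tfrac12\nabla s$, and the commutation $\operatorname{div}\nabla^2 f=\nabla\Delta f+\operatorname{Ric}(\nabla f)$. Together they give
\[
\nabla s=2\operatorname{Ric}(\nabla f)=4\operatorname{Ric}(X).
\]
This identity is global on $\Omega$ even if $f$ is multivalued. $C^1$-bounded geometry bounds $|\nabla s|$. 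Outside $K$, the upper bound $\operatorname{Ric}\le -C_1 g$ gives $|\operatorname{Ric}(X)|\ge C_1|X|$. Hence $|X|_g\le C$.

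This is where the $C^1$ (not merely $C^0$) hypothesis and the upper Ricci bound naturally enter. Your Step 2 records that $\nabla\partial\bar\partial f$ is bounded but never combines it with this commutation formula.

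\emph{Step 4.} Even with $|X|_g$ bounded, the Omori--Yau argument does not close.
Bochner's formula gives $\Delta|X^{1,0}|^2\ge C_1|X^{1,0}|^2$ only outside $K$. The maximizing sequence may therefore converge to an interior maximum point in $K$. There $\operatorname{Ric}$ has no sign, and you only learn $|\nabla X^{1,0}|^2\le\operatorname{Ric}(X^{1,0},\overline{X^{1,0}})$. ``Analyticity'' does not exclude this case.

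One way to finish is to prove decay instead of looking at the supremum.
\begin{enumerate}
\item Take an exhaustion $\tilde r$ with bounded gradient and Laplacian (available under bounded curvature) and a small $\alpha>0$. Compare $u=|X^{1,0}|^2$ on $\Omega\setminus K'$ with the supersolution $Me^{-\alpha\tilde r}+\epsilon e^{\alpha\tilde r}$. This gives $u\to0$ at $\partial\Omega$.
\item The maps $\Phi_\zeta$ of Theorem~\ref{thm:charts} satisfy $\Phi_\zeta^*g\approx g_{\mathrm{Euc}}$ and take values in the bounded set $\Omega$. Cauchy estimates then give $g\gtrsim g_{\mathrm{Euc}}$; this is where your chart idea is genuinely useful.
\item Hence the Euclidean coefficients of $X^{1,0}$ are holomorphic on $\Omega$ and tend to $0$ at $\partial\Omega$, so they vanish identically.
\end{enumerate}

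Alternatively, you can argue through $\lambda$ and Hamilton's identity.
\begin{enumerate}
\item Bounded $|\nabla f|$ forces $\lambda\le -C_1$: along a ray $\gamma$, eventually $(f\circ\gamma)''=\lambda-\operatorname{Ric}(\dot\gamma,\dot\gamma)\ge\lambda+C_1$.
\item Hamilton's identity $s+|\nabla f|^2-2\lambda f=\mathrm{const}$ then makes $f$ bounded on the universal cover, hence single-valued on $\Omega$.
\item Omori--Yau applied at $\sup f$ and at $\inf f$ gives $\sup f\le\inf f$, so $X=0$ and $\operatorname{Ric}=\lambda g$.
\end{enumerate}
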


From Theorem \ref{main:rigidity} (i) and Remark \ref{neg_pin_feff-sgo-metric}, the Fefferman-Szeg\H o metric $g_{\operatorname{FS}}$ satisfies the conditions of Theorem \ref{MT1}.

\begin{proof}[Proof of Theorem \ref{main:rigidity} (ii)] 
The 'if' part: if $g_{\operatorname{FS}}$ is a gradient K\"ahler-Ricci soliton, then there exists a real holomorphic vector field such that 
\begin{equation}
    \operatorname{Ric}(g_{\operatorname{FS}})+\mathcal{L}_{X}g_{\operatorname{FS}}=-\frac{n+1}{n}g_{\operatorname{FS}}.
\end{equation}
Moreover, there exists a real smooth function $f$ such that $X=\frac{1}{2}\nabla f$, implying the dual $1$-form $X^{\mathfrak{b}}$ of $X$ is closed. Indeed, $X^{\mathfrak{b}}=\frac{1}{2}df$, then $dX^{\mathfrak{b}}=0$. Combining the above argument and Theorem \ref{MT1}, the gradient K\"ahler-Ricci soliton is K\"ahler-Einstein, then $\Om$ is biholomorphic to the ball by Theorem \ref{Yuan}.
    
The 'only if' part: Because $\Omega$ is biholomorphic to the ball and the Ricci curvature is invariant under biholomorphic map, so
    $$\operatorname{Ric}(g_{\operatorname{FS}})=-\frac{n+1}{n}g_{\operatorname{FS}},$$
    which implies that $g_{\operatorname{FS}}$ is trivial K\"ahler–Ricci soliton. We can choose $X=0$, then
    $$\operatorname{Ric}(g_{\operatorname{FS}})+\mathcal{L}_Xg_{\operatorname{FS}}=-\frac{n+1}{n}g_{\operatorname{FS}},$$
    where $X$ is a trivial gradient vector field. In this case, $g_{\operatorname{FS}}$ is a gradient K\"ahler–Ricci soliton. Then the proof of the second part of Theorem \ref{main:rigidity} is completed.
\end{proof}

\begin{remark}
    This indicates that there exist no non-trivial gradient K\" ahler-Ricci soliton of the Fefferman-Szeg\H o metric on strongly pseudoconvex domain with $C^\infty$-smooth boundary. 
\end{remark}
\subsection{Proof of Theorem \ref{main:rigidity} (iii)}
\begin{proof}
 Using the definiton of $\mathfrak{R}(z)$, we get
  \[\log \det G(z)=\frac{n+1}{n}\log S(z)+\log \mathfrak{R}(z).\]
Then, taking $\partial_{i\bar j}$ on both sides,
it follows that
\begin{align*}
\operatorname{Ric}_{i\bar j}+\pa_{i\ov j}\log \mathfrak{R}(z)=-\frac{n+1}{n}g_{i\bar j}.
\end{align*}
Taking trace with respect to $g_{\operatorname{FS}}$, we have
\begin{align*}
\operatorname{tr}_{g_{\operatorname{FS}}}\operatorname{Ric}(g_{\operatorname{FS}})+\operatorname{tr}_{g_{\operatorname{FS}}}\left(\pa_{i\ov j}\log \mathfrak{R}(z)\right)=-\frac{n+1}{n}\operatorname{tr}_{g_{\operatorname{FS}}}g_{\operatorname{FS}}.
\end{align*}
Since $g_{\operatorname{FS}}$ has constant scalar curvature, Proposition \ref{equa f} implies 
$$-(n+1)+\Delta_{g_{\operatorname{FS}}} \log \mathfrak{R}(z)=-(n+1),$$
then,
$$ \Delta_{g_{\operatorname{FS}}} \log \mathfrak{R} \equiv 0 \quad \text{in} \quad \Omega.$$
This implies $\log \mathfrak{R}$ is harmonic with respect to $g_{\operatorname{FS}}$ with the constant boundary value $((n-1)!)^{-\frac{n+1}{n}}n^{n}\pi^{n+1}$; see \cite[Theorem 1.1 (b)]{b26}). By the maximum principle for such harmonic functions, it follows that 
\[
    \log \mathfrak{R} \equiv -\frac{n+1}{n}\log (n-1)!+n\log n+(n+1)\log\pi.
\]
Then using \cite[Corollary 4.8]{y25} we have $g_{\operatorname{FS}}$ is K\"ahler-Einstein. The moreover part follows immediately from Theorem \ref{Yuan}.
\end{proof}
\section{Ramadanov-type conjecture for the Fefferman-Szeg\H o kernel}
In this section, we prove Theorem~\ref{gen_rama_conj}. Throughout this section $\Omega$ denotes a strongly pseudoconvex domain with $C^\infty$-smooth boundary. In view of Theorem~\ref{gen_rama_conj}, it suffices to work with a Fefferman defining function $\rho$ for $\Om=\{z\in\mathbb C^n:\rho(z)>0\}$. We begin by recalling the asymptotic expansion of the Fefferman-Szeg\H{o} kernel (see \cite{fc74,bs76}):
    \[S(z)=\frac{\phi(z)}{\rho(z)^{n}}+\psi(z)\log\rho(z)=\frac{h(z)}{\rho(z)^n},\]
where $\phi,\psi\in C^{\infty}(\overline{\Omega})$ and $\phi|_{\pa\Om}=(n-1)!/\pi^n$. 

The following result of Hirachi--Komatsu--Nakazawa \cite[Proposition 1 and Remark 2]{hkn93} gives the asymptotic behaviour of $\phi$ and $\psi$.
    \begin{itemize}
        \item If $n=2$, then
        $$\phi=\pi^{-2}+O(\rho^{2}),\quad \psi=k_{1}\eta_{1}\rho+k_{2}|A_{2\bar 4}^{0}|\rho^{2}+O(\rho^{3}),$$
      where $k_1$, $k_2$ are universal constants independent of the domain $\Omega$. Moreover, $\partial\Omega$ is locally spherical if and only if $\psi=O(\rho^{3})$.
        \item  If $n\ge 3$, then
        \begin{equation}\label{asym:phi_n_ge_3}
            \phi=\frac{(n-1)!}{\pi^n}+b_{n}|A_{2\bar2}^{0}|^{2}\rho^{2}+O(\rho^{3}),
        \end{equation}
        where $b_n$ is a universal constant only depending on $n$. 
    \end{itemize}
\begin{remark}
    For $n\geq 3$, the boundary $\partial\Omega$ is locally spherical if and only if
$\phi=\frac{(n-1)!}{\pi^n}+O(\rho^{3}).$
\end{remark}
For the remainder of this section, we assume $n\ge4$.
\begin{lemma}
 There exist functions $q,\ti q\in C^\infty(\overline\Omega)$ such that $h$ can be expressed as 
   \[h=\left(\frac{n^{n}}{C_{n}}\right)^{\frac{n}{n+1}}(1+q\rho^{2}+\ti q\rho^{3}+o(\rho^{3}))\] where $C_{n}=\left((n-1)!\right)^{-\frac{n+1}{n}}n^{n}\pi^{n+1}$.
\end{lemma}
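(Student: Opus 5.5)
The plan is to read the expansion directly off the Fefferman-type decomposition $S=\phi\rho^{-n}+\psi\log\rho$, so that $h=\rho^nS=\phi+\psi\rho^n\log\rho$, and then treat the smooth part $\phi$ and the logarithmic part separately.

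\emph{Normalizing constant.} First I would check that the constant in front is $\phi|_{\partial\Omega}$. Indeed,
\[
\Bigl(\frac{n^n}{C_n}\Bigr)^{\frac{n}{n+1}}=\Bigl(\frac{n^n\,((n-1)!)^{\frac{n+1}{n}}}{n^n\pi^{n+1}}\Bigr)^{\frac{n}{n+1}}=\frac{(n-1)!}{\pi^n}.
\]
Denote this constant by $c_n$. The claim is then equivalent to
\[
h=c_n+c_nq\rho^2+c_n\tilde q\rho^3+o(\rho^3).
\]

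\emph{The logarithmic term.} Since $\psi\in C^\infty(\overline\Omega)$ is bounded and $n\ge 4$, we have
\[
|\psi\rho^n\log\rho|\le C\rho^4|\log\rho|=o(\rho^3).
\]
So this term can be absorbed into the error. This is exactly where the standing assumption $n\ge4$ enters; for $n=3$ the term $\rho^3\log\rho$ would not be $o(\rho^3)$ and would not fit into the stated form.

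\emph{The smooth part.} By \eqref{asym:phi_n_ge_3}, valid since $n\ge 3$,
\[
\phi-c_n-b_n|A^0_{2\bar2}|^2\rho^2=O(\rho^3).
\]
I would take $q:=b_n|A^0_{2\bar 2}|^2/c_n$, after multiplying by a cutoff to extend it from a collar neighbourhood of $\partial\Omega$ to a $C^\infty(\overline\Omega)$ function. This uses that $|A^0_{2\bar2}|^2$, computed in Moser normal form, depends smoothly on the base point, which is part of the Hirachi–Komatsu–Nakazawa setup. The remainder $r:=\phi-c_n-c_nq\rho^2$ is then smooth on $\overline\Omega$ and is $O(\rho^3)$.

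Next I would invoke the standard division lemma. Because $d\rho\neq0$ on $\partial\Omega$, we may use $\rho$ as a normal coordinate in a collar, and Taylor's formula with integral remainder in that coordinate shows that a smooth function which is $O(\rho^3)$ can be written as $\rho^3$ times a smooth function. Applying this to $r$ gives $r=c_n\tilde q\rho^3$ with $\tilde q$ smooth near $\partial\Omega$. Away from the boundary $\rho$ is bounded below, so I would define $\tilde q:=r/(c_n\rho^3)$ there and glue the two definitions. Combining this with the estimate on the logarithmic term gives the lemma, in fact with exact equality $\phi=c_n(1+q\rho^2+\tilde q\rho^3)$ for the smooth part.

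\emph{Main obstacle.} The only delicate point is regularity. One must make sure that $O(\rho^3)$ in \eqref{asym:phi_n_ge_3} genuinely comes from a smooth function vanishing to third order, so that $\tilde q$ is $C^\infty$ up to the boundary rather than merely bounded. One must also check that $q$ is smooth; this requires the smooth dependence of the Moser normal form on the base point.

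If that dependence is not available, the fallback is to avoid naming $q$ explicitly. Taylor expanding $\phi$ in the normal variable gives $\phi=\phi_0+\phi_1\rho+\phi_2\rho^2+\tilde\phi\rho^3$ with all coefficients smooth. Then \eqref{asym:phi_n_ge_3} forces $\phi_0=c_n$ and $\phi_1=0$, and one can set $q=\phi_2/c_n$ and $\tilde q=\tilde\phi/c_n$.
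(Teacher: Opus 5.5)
Your proposal is correct and follows the same route as the paper: you write $h=\phi+\psi\rho^n\log\rho$, absorb the logarithmic term into $o(\rho^3)$ using $n\ge 4$, and read off $q,\tilde q$ from $\phi|_{\partial\Omega}=(n-1)!/\pi^n$ together with the Hirachi--Komatsu--Nakazawa expansion \eqref{asym:phi_n_ge_3}. Beyond what the paper writes, you also verify that $(n^n/C_n)^{n/(n+1)}=(n-1)!/\pi^n$ and give the normal-variable Taylor (division-by-$\rho^3$) argument that makes $q$ and $\tilde q$ genuinely $C^\infty(\overline\Omega)$; the paper leaves both points implicit.
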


\begin{proof}
     From the definition of $h$, we have $h=\phi+o(\rho^{n-1})$. For $n\ge 4$, this implies $h=\phi+o(\rho^{3})$. Using $\phi\in C^\infty(\ov\Om)$, $\phi|_{\pa\Om}=(\frac{n^{n}}{C_{n}})^{\frac{n}{n+1}}$ and \eqref{asym:phi_n_ge_3}, we have 
$$h=\left(\frac{n^{n}}{C_{n}}\right)^{\frac{n}{n+1}}(1+q\rho^{2}+\ti q\rho^{3}+o(\rho^{3})).$$
where $q,\tilde q\in C^\infty(\overline{\Omega})$. 
\end{proof}
Set $Q:=(\frac{C_{n}}{n^{n}})^{\frac{n}{n+1}}h$, then
\begin{equation}\label{Q}
    Q=1+q\rho^{2}+\ti q\rho^{3}+o(\rho^{3}).
\end{equation} Next, we define 
\[\Phi=\left(\frac{C_{n}}{n^n}S^{\frac{n+1}{n}}\right)^{-\frac{1}{n+1}}=\rho Q^{-\frac{1}{n}}.\] 
Since $Q\neq0$ in $\Omega$, $\Phi$ is well-defined. 

The proof is based on deriving an asymptotic expansion for \(J[\Phi]\)
near \(\partial\Omega\). We first express \(J[\Phi]\) in terms of the
Fefferman defining function \(\rho\) and the coefficient \(q\) arising in
the asymptotic expansion of the Fefferman--Szeg\H{o} kernel. We then use
the identity $J[\Phi]=\mathfrak R/C_n,$ which follows from the definition of the Fefferman--Szeg\H{o} invariant
function, to obtain the asymptotic behaviour of \(\mathfrak R\). 

We begin by recording the derivatives of $\Phi$ in terms of $Q$.
\begin{lemma}\label{Phi-derivatives}
    For $\al,\be=1,\cdots,n$,  we have 
\begin{align*}
       \Phi_\al&=\Phi(\rho^{-1}\rho_{\alpha}-\frac{1}{n}Q^{-1}Q_{\alpha})\\
        \Phi_{\ov\be}&=\Phi(\rho^{-1}\rho_{\ov\be}-\frac{1}{n}Q^{-1}Q_{\ov\be})\\
         \Phi_{\al\ov\be}&=Q^{-\frac{1}{n}}\left(-\frac{1}{n} Q^{-1}\left(Q_{\alpha} \rho_{\bar{\beta}}+Q_{\bar{\beta}} \rho_{\alpha}\right)+\frac{n+1}{n^{2}} Q^{-2} Q_{\alpha} Q_{\bar{\beta}} \rho+\rho_{\alpha \bar{\beta}}-\frac{1}{n} Q^{-1} Q_{\alpha \bar{\beta}} \rho\right).
   \end{align*}
\end{lemma}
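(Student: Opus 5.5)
This is a direct calculus computation from $\Phi=\rho Q^{-1/n}$, using only the product and chain rules; no earlier result beyond the definition of $\Phi$ is needed. Throughout, subscripts $\alpha$ and $\bar\beta$ denote $\partial_\alpha$ and $\partial_{\bar\beta}$. Since $Q\neq 0$ on $\Omega$, near the boundary $Q$ is close to $1$ by \eqref{Q}, so the real power $Q^{-1/n}$ is smooth there and can be differentiated freely.

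\emph{First derivatives.} I will differentiate logarithmically:
\[
\log\Phi=\log\rho-\tfrac1n\log Q
\]
(with $\log\rho$ taken in the appropriate real sense, or simply using the product rule directly). This gives
\[
\Phi_\alpha=\rho_\alpha Q^{-1/n}-\tfrac1n\rho Q^{-1/n-1}Q_\alpha .
\]
Factoring out $\Phi=\rho Q^{-1/n}$ yields
\[
\Phi_\alpha=\Phi\bigl(\rho^{-1}\rho_\alpha-\tfrac1n Q^{-1}Q_\alpha\bigr).
\]
The $\bar\beta$-derivative is identical, since $\rho$ and $Q$ are real-valued.

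\emph{Mixed derivative.} To avoid the $\rho^{-1}$ factor, I will start from the unfactored form
\[
\Phi_\alpha=Q^{-1/n}\rho_\alpha-\tfrac1n Q^{-(n+1)/n}Q_\alpha\rho
\]
and apply $\partial_{\bar\beta}$ term by term. The first term gives
\[
-\tfrac1n Q^{-(n+1)/n}Q_{\bar\beta}\rho_\alpha+Q^{-1/n}\rho_{\alpha\bar\beta}.
\]
The second term gives three pieces:
\begin{itemize}
\item $\tfrac{n+1}{n^2}Q^{-(2n+1)/n}Q_{\bar\beta}Q_\alpha\rho$, from differentiating the power of $Q$;
\item $-\tfrac1n Q^{-(n+1)/n}Q_{\alpha\bar\beta}\rho$, from differentiating $Q_\alpha$;
\item $-\tfrac1n Q^{-(n+1)/n}Q_\alpha\rho_{\bar\beta}$, from differentiating $\rho$.
\end{itemize}
Pulling out the common factor $Q^{-1/n}$ groups the two cross terms into $-\tfrac1n Q^{-1}(Q_\alpha\rho_{\bar\beta}+Q_{\bar\beta}\rho_\alpha)$, and the result is exactly the claimed formula.

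There is no real obstacle here. The only point requiring care is bookkeeping of the exponents, namely
\[
\partial\bigl(Q^{-(n+1)/n}\bigr)=-\tfrac{n+1}{n}Q^{-(2n+1)/n}\,\partial Q ,
\]
which is what produces the coefficient $\tfrac{n+1}{n^2}$ together with the $Q^{-2}$ factor after $Q^{-1/n}$ is extracted.
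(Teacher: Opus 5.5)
Your computation is correct and is exactly the direct product/chain-rule calculation the paper has in mind (the paper simply states that the lemma follows from direct calculation); all exponents and coefficients check out, including the $\frac{n+1}{n^2}Q^{-2}Q_\alpha Q_{\bar\beta}\rho$ term. One small remark: you need not restrict to a neighbourhood of the boundary, since $Q$ is a positive multiple of $S\rho^n$, so $Q>0$ on all of $\Omega$ and $Q^{-1/n}$ is smooth there.
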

\begin{proof}
    The proof of this lemma follows from the direct calculations.
\end{proof}
We now compute the Monge–Amp\'ere operator of $\Phi$.
\begin{lemma}\label{J-Phi}
    We have $J[\Phi]=\frac{(-1)^{n}}{Q^\frac{n+1}{n}}\det\begin{bmatrix}
           \rho & \rho_{\overline{\beta}}\\
           \rho_{\alpha} & \rho_{\alpha\bar\beta}-\frac{1}{n}\rho(\log Q)_{\al\ov\be}
       \end{bmatrix}.$
\end{lemma}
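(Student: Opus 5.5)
We plan to compute $J[\Phi]$ straight from its definition as a bordered determinant, using the formulas of Lemma \ref{Phi-derivatives}. The first step is to factor out the powers of $Q$. Write $u=\rho^{-1}$-free quantities as follows: $\Phi=\rho Q^{-1/n}$, $\Phi_{\bar\beta}=Q^{-1/n}\bigl(\rho_{\bar\beta}-\tfrac1n\rho Q^{-1}Q_{\bar\beta}\bigr)$, and similarly for $\Phi_\alpha$. Every entry of the $(n+1)\times(n+1)$ matrix then carries a factor $Q^{-1/n}$. Pulling this factor out of all $n+1$ rows gives the prefactor $Q^{-(n+1)/n}$.

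It remains to identify the reduced matrix
\[
\begin{bmatrix}
\rho & \rho_{\bar\beta}-\tfrac1n\rho(\log Q)_{\bar\beta}\\
\rho_\alpha-\tfrac1n\rho(\log Q)_\alpha & M_{\alpha\bar\beta}
\end{bmatrix}.
\]
Here $M_{\alpha\bar\beta}$ is the bracket in Lemma \ref{Phi-derivatives}:
\[
M_{\alpha\bar\beta}=\rho_{\alpha\bar\beta}-\tfrac1n\bigl((\log Q)_\alpha\rho_{\bar\beta}+(\log Q)_{\bar\beta}\rho_\alpha\bigr)+\tfrac{n+1}{n^2}\rho(\log Q)_\alpha(\log Q)_{\bar\beta}-\tfrac1n\rho Q^{-1}Q_{\alpha\bar\beta}.
\]

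The second step is elementary row and column operations, which leave the determinant unchanged. Set $a_\alpha=\tfrac1n(\log Q)_\alpha$ and $\bar a_\beta=\tfrac1n(\log Q)_{\bar\beta}$.

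First add $a_\alpha$ times row $0$ to row $\alpha$. Row $\alpha$, column $0$ becomes $\rho_\alpha$. Row $\alpha$, column $\beta$ picks up $a_\alpha\rho_{\bar\beta}-\rho a_\alpha\bar a_\beta$.

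Then add $\bar a_\beta$ times column $0$ to column $\beta$. Row $0$ becomes $(\rho,\rho_{\bar\beta})$. The $(\alpha,\beta)$ entry picks up $\bar a_\beta\rho_\alpha$.

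After these operations the lower-right block is
\[
\rho_{\alpha\bar\beta}+\rho\bigl(\tfrac{n+1}{n^2}-\tfrac1{n^2}\bigr)(\log Q)_\alpha(\log Q)_{\bar\beta}-\tfrac1n\rho Q^{-1}Q_{\alpha\bar\beta}.
\]
The cross terms $\mp\tfrac1n(\cdots)$ cancel exactly.

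The final step uses the identity $(\log Q)_{\alpha\bar\beta}=Q^{-1}Q_{\alpha\bar\beta}-Q^{-2}Q_\alpha Q_{\bar\beta}$. The coefficient of $\rho(\log Q)_\alpha(\log Q)_{\bar\beta}$ in the block is $\tfrac{n+1}{n^2}-\tfrac1{n^2}=\tfrac1n$. So the block equals $\rho_{\alpha\bar\beta}-\tfrac1n\rho\bigl(Q^{-1}Q_{\alpha\bar\beta}-Q^{-2}Q_\alpha Q_{\bar\beta}\bigr)=\rho_{\alpha\bar\beta}-\tfrac1n\rho(\log Q)_{\alpha\bar\beta}$. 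Multiplying by $(-1)^n$ from the definition of $J$ gives the stated formula.

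The main obstacle is bookkeeping of signs and the quadratic coefficient: the $\rho a_\alpha\bar a_\beta$ correction from the row operation must be counted once (the column operation adds nothing quadratic, since row $0$, column $0$ is $\rho$ and the quadratic term there arises only once). If the count gives a coefficient other than $\tfrac1n$, we would recheck by testing on $Q=e^{\lambda}$ with $\lambda$ linear, where $(\log Q)_{\alpha\bar\beta}=0$ and the answer should reduce to $Q^{-(n+1)/n}J[\rho]$.
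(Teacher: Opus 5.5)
Your proposal is correct and follows essentially the same route as the paper: you factor $Q^{-1/n}$ out of all $n+1$ rows, then remove the $-\tfrac1n\rho(\log Q)_\alpha$ and $-\tfrac1n\rho(\log Q)_{\bar\beta}$ terms from the border with one row operation and one column operation, and the surviving quadratic term combines into $-\tfrac1n\rho(\log Q)_{\alpha\bar\beta}$. You do the row operation first and the paper does the column operation first, which is immaterial, and your coefficient $\tfrac{n+1}{n^2}-\tfrac{1}{n^2}=\tfrac1n$ checks out.
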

\begin{proof}
By Lemma \ref{Phi-derivatives},
$$J[\Phi]=(-1)^{n}\det\begin{bmatrix}
 Q^{-\frac{1}{n}} \rho & Q^{-\frac{1}{n}}\left(\rho_{\overline{\beta}}-\frac{1}{n} Q^{-1} Q_{\overline{\beta}} \rho\right) \\
Q^{-\frac{1}{n}}\left(\rho_{\alpha}-\frac{1}{n} Q^{-1} Q_{\alpha} \rho\right) & \Phi_{\alpha \overline{\beta}}
\end{bmatrix}.
$$
 Factoring $Q^{-\frac{1}{n}}$ from each of the $n+1$ rows gives 
$$J[\Phi]=\frac{(-1)^{n}}{Q^\frac{n+1}{n}}\det\begin{bmatrix}
  \rho & \rho_{\overline{\beta}}-\frac{1}{n} Q^{-1} Q_{\overline{\beta}} \rho \\
\rho_{\alpha}-\frac{1}{n} Q^{-1} Q_{\alpha} \rho & \Phi^{(1)}_{\alpha \overline{\beta}}
\end{bmatrix}
$$
where 
\[
\Phi^{(1)}_{\alpha \overline{\beta}}=-\frac{1}{n} Q^{-1}\left(Q_{\alpha} \rho_{\bar{\beta}}+Q_{\bar{\beta}} \rho_{\alpha}\right)+\frac{n+1}{n^{2}} Q^{-2} Q_{\alpha} Q_{\bar{\beta}} \rho+\rho_{\alpha \bar{\beta}}-\frac{1}{n} Q^{-1} Q_{\alpha \bar{\beta}} \rho.
\]
Next, add $\frac{1}{n} Q^{-1} Q_{\bar{\beta}}$ times the first column to the $(\beta+1)$-th column. Then 
\[
J[\Phi]=\frac{(-1)^n}{Q^\frac{n+1}{n}} \begin{bmatrix}
    \rho & \rho_{\overline{\beta}} \\
\rho_{\alpha}-\frac{1}{n} Q^{-1} Q_{\alpha} \rho & \Phi^{(2)}_{\alpha \overline{\beta}}
\end{bmatrix},
\]
where
\[
\Phi^{(2)}_{\alpha \overline{\beta}}=-\frac{1}{n} Q^{-1} Q_{\alpha} \rho_{\overline{\beta}}+\frac{1}{n} Q^{-2} Q_{\alpha} Q_{\overline{\beta}} \rho+\rho_{\alpha \overline{\beta}}-\frac{1}{n} Q^{-1} Q_{\alpha \overline{\beta}} \rho.
\]
Finally, adding $\frac{1}{n} Q^{-1} Q_{\alpha}$ times the first row to the $(\alpha+1)$-th row completes the proof.
\end{proof}
We now derive the asymptotic expansion of $J[\Phi]$.
\begin{lemma}\label{approx:J_exp}
We have
    \begin{equation*}
        J[\Phi]=\left(1-\frac{n+1}{n}q \rho^2\right)\left(1+\frac{2}{n} q \rho^2\right)\left(1-\frac{2}{n} q \rho^2\right)^{n-1} J[\rho]+o\left(\rho^2\right).
    \end{equation*}
\end{lemma}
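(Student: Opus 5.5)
The plan is to start from Lemma~\ref{J-Phi}, which gives
\[
J[\Phi]=(-1)^nQ^{-\frac{n+1}{n}}\det\begin{bmatrix}\rho&\rho_{\bar\beta}\\ \rho_\alpha&\rho_{\alpha\bar\beta}-\frac1n\rho(\log Q)_{\alpha\bar\beta}\end{bmatrix},
\]
and expand everything modulo $o(\rho^2)$. By \eqref{Q}, $Q^{-\frac{n+1}{n}}=1-\frac{n+1}{n}q\rho^2+O(\rho^3)$, which produces the first factor. For the Hessian term we have $\log Q=q\rho^2+O(\rho^3)$, and since $Q\in C^\infty$ modulo the $o(\rho^3)$ remainder, I will assume that remainder can be differentiated twice to give $o(\rho)$. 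This follows from the structure $h=\phi+\psi\rho^n\log\rho$ with $n\ge4$, since $\rho^n\log\rho$ differentiated twice is $O(\rho^{n-2}\log\rho)=o(\rho)$. Then
\[
(\log Q)_{\alpha\bar\beta}=2q\rho_\alpha\rho_{\bar\beta}+2\rho\bigl(q_\alpha\rho_{\bar\beta}+q_{\bar\beta}\rho_\alpha+q\rho_{\alpha\bar\beta}\bigr)+o(\rho),
\]
so that
\[
\rho(\log Q)_{\alpha\bar\beta}=2q\rho\,\rho_\alpha\rho_{\bar\beta}+2\rho^2\bigl(q_\alpha\rho_{\bar\beta}+q_{\bar\beta}\rho_\alpha+q\rho_{\alpha\bar\beta}\bigr)+o(\rho^2).
\]

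Next I simplify the bordered determinant by row and column operations, which do not change it. Subtracting $\frac{2}{n}\rho^2 q_{\bar\beta}/\rho=\frac2n\rho q_{\bar\beta}$ times the first column from column $\beta+1$ removes the $\rho^2 q_{\bar\beta}\rho_\alpha$ term, up to an error $\frac{2}{n}\rho q_{\bar\beta}\rho_\alpha$ correction in the first row that is $O(\rho^2)$ and contributes only at higher order; the analogous row operation removes $\rho^2q_\alpha\rho_{\bar\beta}$. What remains, modulo $o(\rho^2)$ in entries that matter, is the matrix
\[
\begin{bmatrix}\rho&\rho_{\bar\beta}\\ \rho_\alpha&\bigl(1-\frac{2}{n}q\rho^2\bigr)\rho_{\alpha\bar\beta}-\frac{2}{n}q\rho\,\rho_\alpha\rho_{\bar\beta}\end{bmatrix}.
\]
Now add $\frac2n q\rho_{\bar\beta}$ times the first row (after the operations) to handle the rank-one term. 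Subtracting $-\frac{2}{n}q\rho_\alpha\times$(first row) from row $\alpha+1$ turns the $\rho_\alpha$ in column $1$ into $\rho_\alpha(1+\frac2nq\rho)$ and kills the rank-one term exactly.

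The cleanest way to finish is the standard formula
\[
(-1)^n\det\begin{bmatrix}\rho&b^*\\ b&A\end{bmatrix}=\det A\,\bigl(|b|^2_{A^{-1}}-\rho\bigr)\cdot(-1)^{n}(-1)^{n}\ldots
\]
Concretely, I will write $J[u]=\det(u_{\alpha\bar\beta})\,(|\partial u|^2_{u}-u)$ in the form $\det A\,(\rho_\alpha A^{\alpha\bar\beta}\rho_{\bar\beta}-\rho)$. Take $A=(1-\frac2nq\rho^2)H-\frac2nq\rho\,\partial\rho\,\bar\partial\rho$ with $H=(\rho_{\alpha\bar\beta})$. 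Its determinant is $(1-\frac2nq\rho^2)^n\det H\,(1-\frac{2q\rho}{n}|\partial\rho|_H^2+O(\rho^3))$. By Sherman--Morrison,
\[
|\partial\rho|^2_{A^{-1}}=\frac{|\partial\rho|^2_H}{1-\frac2nq\rho^2}\Big/\Bigl(1-\frac{2q\rho}{n}|\partial\rho|^2_H\Bigr)+o(\rho^2).
\]
Multiplying out, the factor $(1-\frac{2q\rho}n|\partial\rho|_H^2)$ cancels, and one is left with $(1-\frac2nq\rho^2)^{n-1}\det H\,\bigl(|\partial\rho|_H^2-\rho(1-\frac2nq\rho^2)(1-\frac{2q\rho}n|\partial\rho|_H^2)\bigr)$. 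Using $J[\rho]=\det H(|\partial\rho|^2_H-\rho)=1+O(\rho^{n+1})$ (Fefferman defining function) to replace $|\partial\rho|_H^2=1+\rho+O(\rho^{n+1})$, the bracket becomes $(1+\frac2nq\rho^2)J[\rho]/\det H\cdot\det H+o(\rho^2)$. Combined with the $Q^{-\frac{n+1}{n}}$ factor, this gives the claim.

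The main obstacle is the bookkeeping in this last step. I must verify that the $q_\alpha$ cross terms and the rank-one term contribute exactly the factor $(1+\frac2nq\rho^2)$ and nothing else at order $\rho^2$, which requires repeated use of $|\partial\rho|_H^2=1+\rho+O(\rho^{n+1})$. I will also need to check that the $o(\rho^3)$ remainder in $Q$ really stays $o(\rho)$ after two derivatives, which is where $n\ge4$ is used.
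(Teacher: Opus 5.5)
Your plan (expand Lemma~\ref{J-Phi} modulo $o(\rho^2)$, clear the $q_\alpha,q_{\bar\beta}$ terms by border operations, then treat the rank-one term) is sound. However, the finishing step as written rests on a false identity and on an assumption that is not available.

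\textbf{The false identity.} The replacement $|\partial\rho|_H^2=1+\rho+O(\rho^{n+1})$ already fails for the ball: $\rho=1-|z|^2$ gives $|\partial\rho|_H^2=-|z|^2=\rho-1$. The correctly signed Schur formula is $J[\rho]=(-1)^{n+1}\det H\,(|\partial\rho|_H^2-\rho)$. Yours omits $(-1)^{n+1}$, which is harmless only because you use it for both determinants. So the Fefferman condition gives only $|\partial\rho|_H^2-\rho=(-1)^{n+1}(\det H)^{-1}(1+O(\rho^{n+1}))$.

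Fortunately no normalization is needed. With $s=|\partial\rho|_H^2$, your bracket is $(s-\rho)+\frac2nq\rho^2 s+O(\rho^3)$. Since $\rho^2 s=\rho^2(s-\rho)+\rho^3$, it equals $(1+\frac2nq\rho^2)(s-\rho)+O(\rho^3)$ directly. The condition $J[\rho]=1+O(\rho^{n+1})$ is used only afterwards, in the proof of Theorem~\ref{gen_rama_conj}.

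\textbf{The invertibility assumption.} Schur complements and Sherman--Morrison need $H=(\rho_{\alpha\bar\beta})$ to be invertible. A Fefferman defining function need not have this property: locally, $4^{1/(n+1)}(\operatorname{Im}z_n-|z'|^2)$ has $J\equiv 1$ but complex Hessian of rank $n-1$. Where $\det H$ is small, $s$ and $|\partial\rho|^2_{A^{-1}}$ blow up, and your $O(\rho^3)$ and $o(\rho^2)$ remainders are not uniform. To rescue this route you must multiply through by $\det H$ and work with adjugates ($(\det H)s$ is a polynomial in the entries of $H$ and $\partial\rho$), or argue by density.

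\textbf{The cleaner fix.} Use the row operation you started and then dropped, with correct multipliers; this is exactly the paper's proof.
\begin{itemize}
\item The term $-\frac2n\rho^2q_{\bar\beta}\rho_\alpha$ is removed by adding $\frac2n\rho^2q_{\bar\beta}$ (not $\frac2n\rho\,q_{\bar\beta}$) times the first column. This changes the first row only by $O(\rho^3)$.
\item The rank-one term $-\frac2nq\rho\,\rho_\alpha\rho_{\bar\beta}$ is killed exactly by adding $\frac2nq\rho\,\rho_\alpha$ (not $\frac2nq\rho_\alpha$) times the first row to row $\alpha+1$. This makes the first column $\rho_\alpha(1+\frac2nq\rho^2)$, not $\rho_\alpha(1+\frac2nq\rho)$.
\item Factor $1+\frac2nq\rho^2$ from the first column, $1-\frac2nq\rho^2$ from the other $n$ columns, and $(1-\frac2nq\rho^2)^{-1}$ from the first row.
\end{itemize}
What remains is the bordered matrix defining $J[\rho]$, except that its corner entry is $\rho(1-\frac2nq\rho^2)(1+\frac2nq\rho^2)^{-1}=\rho+O(\rho^3)$. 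Its determinant is therefore $(-1)^nJ[\rho]+O(\rho^3)$. This needs neither invertibility of $H$ nor the Fefferman normalization.

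\textbf{Two smaller points.} First, $\log Q=q\rho^2+O(\rho^3)$ does not justify dropping $\tilde q\rho^3$: its complex Hessian contains $6\tilde q\rho\,\rho_\alpha\rho_{\bar\beta}$, which is of order $\rho$. It is harmless, since it is of the same rank-one type and is removed by the same row operation (or you can absorb $\tilde q\rho$ into $q$). Second, on the positive side, your observation that the non-smooth part $\psi\rho^n\log\rho$ of $h$ has second derivatives $O(\rho^{n-2}|\log\rho|)=o(\rho)$ for $n\ge4$ supplies a justification the paper leaves implicit.
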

\begin{proof}
    Using the Taylor expansion $(1+x)^{-\frac{n+1}{n}}=1-\frac{n+1}{n}x+O(x^2)$ together with \eqref{Q}, we obtain
\[
Q^{-\frac{n+1}{n}}=1-\frac{n+1}{n}q \rho^2+o\left(\rho^2\right).
\]

Next, using $\log (1+x)=x+O(x^{2})$, we obtain \[
\log Q=q \rho^2+\ti q \rho^3+o\left(\rho^3\right).
\]
Differentiating gives
\[
(\log Q)_{\alpha}=q_{\alpha} \rho^2+2 q \rho \rho_{\alpha}+3 \ti q \rho^2 \rho_{\alpha}+o\left(\rho^2\right)
\]
\[
\begin{gathered}
(\log Q)_{\alpha \overline{\beta}}
=2 q \rho_{\alpha} \rho_{\overline{\beta}}+2\left(q_{\alpha} \rho_{\overline{\beta}}+q_{\overline{\beta}} \rho_{\alpha}+q\rho_{\alpha \overline{\beta}}+3 \ti q \rho_{\alpha} \rho_{\overline{\beta}}\right) \rho+o(\rho).
\end{gathered}
\]
Substituting this into Lemma~\ref{J-Phi}, we obtain

\[
J[\Phi]=(-1)^n\left(1-\frac{n+1}{n}q \rho^2+o\left(\rho^2\right)\right) \times
\]
\[
\det\begin{bmatrix}
\rho & \rho_{\overline{\beta}} \\
\rho_{\alpha} & \rho_{\alpha \overline{\beta}}-\frac{2}{n}\left(q \rho_{\alpha} \rho_{\overline{\beta}} \rho+\left(q_{\alpha} \rho_{\overline{\beta}}+q_{\overline{\beta}} \rho_{\alpha}+q \rho_{\alpha \overline{\beta}}+3 \ti q \rho_{\alpha} \rho_{\overline{\beta}}\right) \rho^2\right)+o\left(\rho^2\right)
\end{bmatrix}.
\]
Next, add $\frac{2}{n}(q \rho_{\alpha} \rho+(q_{\alpha}+3 \ti q\rho_{\alpha}) \rho^2)$ times the first row to the $(\alpha+1)$-th row, which gives 
$$J[\Phi]=(-1)^{n}\left(1-\frac{n+1}{n}q \rho^2+o\left(\rho^2\right)\right)\times$$
\[
\begin{bmatrix}
    \rho & \rho_{\overline{\beta}} \\
\rho_{\alpha}\left(1+\frac{2}{n} q \rho^2\right)+o\left(\rho^2\right) & \rho_{\alpha \overline{\beta}}-\frac{2}{n}\left(q_{\overline{\beta}} \rho_{\alpha}+q \rho_{\alpha \overline{\beta}}\right) \rho^2+o\left(\rho^2\right)
\end{bmatrix}.
\]
Now add $\frac{2}{n} q_{\bar{\beta}} \rho^2$ times the first column to the $(\beta+1)$-th column, then 
\[
   J[\Phi]
=(-1)^{n}\left(1-\frac{n+1}{n}q \rho^2\right)\det\begin{bmatrix}
    \rho & \rho_{\bar\beta}\\
    \rho_{\alpha}\left(1+\frac{2}{n} q\rho^2\right) & \rho_{\alpha \overline{\beta}}(1-\frac{2}{n}q \rho^2 )
\end{bmatrix}+o(\rho^{2}).
\]
Factoring $\left(1+\frac{2}{n} q \rho^2\right)$ from the first column and $\left(1-\frac{2}{n} q \rho^2\right)$ from the other $n$ columns gives
\begin{align*}
    J[\Phi]&=(-1)^{n}(1-\frac{n+1}{n}q\rho^{2})(1+\frac{2}{n}q\rho^{2})(1-\frac{2}{n}q\rho^{2})^{n}\det\begin{bmatrix}
        \rho\left(1+\frac{2}{n} q \rho^2\right)^{-1} & \rho_{\bar\beta}\left(1-\frac{2}{n} q \rho^2\right)^{-1}\\
        \rho_{\alpha} & \rho_{\alpha\bar\beta}
    \end{bmatrix}+o(\rho^{2}).
\end{align*}
Finally, factoring $\left(1-\frac{2}{n} q \rho^2\right)^{-1}$ from the first row yields
\begin{align*}
    J[\Phi]&=\left(1-\frac{n+1}{n}q \rho^2\right)\left(1+\frac{2}{n} q \rho^2\right)\left(1-\frac{2}{n} q \rho^2\right)^{n-1} J[\rho]+o\left(\rho^2\right).
\end{align*}
\end{proof}
\begin{proof}[Proof of Theorem \ref{gen_rama_conj}]
Recall that $\rho$ is a Fefferman defining function, hence $J(\rho)=1+O(\rho^{n+1})$. Since $n\geq 4$, Lemma \ref{approx:J_exp} implies
\[
J[\Phi]=\left(1-\frac{n+1}{n}q \rho^2\right)\left(1+\frac{2}{n} q \rho^2\right)\left(1-\frac{2}{n} q \rho^2\right)^{n-1}+o\left(\rho^2\right).
\]
Using the expansion $(1+x)^{n-1}=1+(n-1) x+O(x^2)$, we get
\[
\begin{aligned}
J[\Phi]=1+ & \left(-\frac{n+1}{n}+\frac{2}{n}-2 \frac{n-1}{n}\right) q \rho^2+o\left(\rho^2\right) \\
& =1+ \left(\frac{3-3n}{n}\right) q \rho^2+o(\rho^2).
\end{aligned}
\]
On the other hand,
\begin{multline*}
    J[\Phi]=J\Big(\frac{C_{n}}{n^{n}}S^{\frac{n+1}{n}}\Big)^{-\frac{1}{n+1}}=\left(\frac{C_{n}}{n^{n}}S^{\frac{n+1}{n}}\right)^{-1}n^{-n}\det G(z)
    =\frac{n^{-n}\det G(z)}{C_{n}n^{-n}S^{\frac{n+1}{n}}}=\frac{\mathfrak R(z)}{C_{n}}.
\end{multline*}
Consequently,
\[
\mathfrak{R}-C_{n}=\left(\frac{3-3n}{n}\right)C_{n} q \rho^2+o\left(\rho^2\right).
\]
Therefore 
\[\lim_{z\to p} \rho^{-2}(z)\left(\mathfrak{R}(z)-C_{n}\right)=\left(\frac{3-3n}{n}\right)C_{n}q(p).\]
    
By (\ref{asym:phi_n_ge_3}), the function $q$ is a nonzero multiple of the Chern–Moser curvature invariant $|A^0_{2\overline{2}}|^2$. Hence $q(p)=0$ if and only if $p$ is a CR umbilical point of $\pa\Om$. This proves the theorem.
\end{proof}
\begin{corollary}
    Let $\Om=\{\rho<0\}\Subset\mbb C^n,n\geq 4$, be a $C^\infty$-smooth strongly pseudoconvex domain with Fefferman defining function $\rho$, and let $
\mathfrak{R}_\Om(z)$ be the Fefferman-Szeg\H{o} invariant function. Then,
\[\mathfrak{R}_\Om-C_{n}=\left(\frac{3-3n}{n}\right)C_{n} q \rho^2+o\left(\rho^2\right),\] where $C_n=((n-1)!)^{-\frac{n+1}{n}}n^n\pi^{n+1}$.
\end{corollary}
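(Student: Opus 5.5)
The corollary is essentially a by-product of the proof of Theorem \ref{gen_rama_conj}, so I would extract the relevant steps and check that nothing there used the local sphericity hypothesis. The plan has three steps.

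\emph{Step 1: write the kernel in terms of $Q$.} Fix the Fefferman defining function $\rho$ and write $S=h\rho^{-n}$. By the lemma preceding Lemma \ref{Phi-derivatives} (valid since $n\ge4$, so that the logarithmic term $\psi\rho^n\log\rho$ is $o(\rho^3)$), we have $h=(n^n/C_n)^{\frac{n}{n+1}}Q$ with
\[
Q=1+q\rho^2+\tilde q\rho^3+o(\rho^3),
\]
where $q$ is the multiple of $|A^0_{2\bar2}|^2$ coming from \eqref{asym:phi_n_ge_3}. Set $\Phi=\rho Q^{-1/n}$. Unwinding the definitions of $\det G$, $S$ and the Monge--Amp\`ere operator $J$, this gives the identity
\[
J[\Phi]=\frac{\mathfrak R_\Omega}{C_n}.
\]
The key point is that $\log S=-\frac{n+1}{n}\cdot\frac{n}{n+1}\cdots$, i.e. $\log S=-n\log(-\Phi)+\text{const}$. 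The standard formula $\det\partial\bar\partial(-\log(-\Phi))=J[\Phi]\,(-\Phi)^{-(n+1)}$ then converts $\det G$ into $n^nJ[\Phi]$ times a power of $S$. I would verify the constants explicitly, including the sign convention, since $\Omega=\{\rho<0\}$ forces $\Phi$ to be replaced by $-\Phi$ where needed.

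\emph{Step 2: expand $J[\Phi]$.} Apply Lemma \ref{J-Phi} and then Lemma \ref{approx:J_exp} to get
\[
J[\Phi]=\Bigl(1-\tfrac{n+1}{n}q\rho^2\Bigr)\Bigl(1+\tfrac2nq\rho^2\Bigr)\Bigl(1-\tfrac2nq\rho^2\Bigr)^{n-1}J[\rho]+o(\rho^2).
\]
Since $\rho$ is a Fefferman defining function, $J[\rho]=1+O(\rho^{n+1})=1+o(\rho^2)$.

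\emph{Step 3: linearize.} Expanding to first order in $q\rho^2$, the coefficient is
\[
-\tfrac{n+1}{n}+\tfrac2n-\tfrac{2(n-1)}{n}=\tfrac{3-3n}{n}.
\]
Multiplying by $C_n$ yields the stated expansion of $\mathfrak R_\Omega-C_n$.

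The main obstacle is the error bookkeeping in Lemma \ref{approx:J_exp}. The row and column operations must only discard terms that are genuinely $o(\rho^2)$ after multiplying out the determinant. The delicate entries are the first row and column: $\rho$ appears there and multiplies $O(\rho)$ corrections. I would recheck that each discarded term of the form $\rho\cdot o(\rho)$ or $\rho^2 q_\alpha$ enters the determinant with an extra factor of $\rho$ or $\rho_\alpha$, which keeps it $o(\rho^2)$. Once that is confirmed, the corollary follows directly.
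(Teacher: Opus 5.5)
Your proposal is correct and is essentially the paper's own argument: you use the identity $J[\Phi]=\mathfrak R_\Omega/C_n$ coming from $\log S=\mathrm{const}-n\log|\Phi|$, then the bordered-determinant reduction of Lemma~\ref{J-Phi}, then the row/column reduction of Lemma~\ref{approx:J_exp} together with $J[\rho]=1+O(\rho^{n+1})$, and finally the linearization that yields $\frac{3-3n}{n}$. When you do your bookkeeping check, the one point the paper leaves implicit is that the $o(\rho^3)$ remainder in $\log Q$ gets differentiated twice; this is legitimate because that remainder is a smooth multiple of $\rho^4$ plus terms built from $\psi\rho^n\log\rho$, whose second derivatives are $O(\rho^{n-2}\log\rho^{-1})=o(\rho)$ since $n\ge 4$.
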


\section{K{\"a}hler immersions}\label{kahler immersion}

A \emph{complex space form} is a connected complete K{\"a}hler manifold whose holomorphic sectional curvature is constant, say $2b$. Up to biholomorphic isometries, the simply connected ones fall into three types depending on the sign of $b$ (see \cite{l66}, Theorem $1$):
\begin{itemize}
    \item \textbf{Flat case} (\(b = 0\)): the complex Euclidean space \(\mathbb{C}^N\) (with \(N \le \infty\)), with the flat metric \(g_0\).  
          Here \(\mathbb{C}^\infty\) denotes the Hilbert space of square‑summable complex sequences.

    \item \textbf{Hyperbolic case} (\(b < 0\)): the complex hyperbolic space \(\mathbb{CH}^N:=\mathbb B^N_{(-b)^{-1}}\), which is the ball of radius \((-b)^{-1}\) in \(\mathbb{C}^N\) (or \(\mathbb{C}^\infty\)), with the hyperbolic metric $(-b)^{-1}g_{\text{hyp}}$.  
          Its Kähler form is given by
          \[
          \omega_{\text{hyp}} = \frac{1}{b}\sqrt{-1}\,\partial\bar\partial\log\bigl(1 + b\|z\|^2\bigr).
          \]

    \item \textbf{Projective case} (\(b > 0\)): the complex projective space \(\mathbb{CP}^N\) (with \(N \le \infty\)), with the Fubini–Study metric \(\frac{1}{b}g_{\operatorname{FS}}\).  
          In homogeneous coordinates $[Z_0,\ldots,Z_N]$ coordinates, its Kähler form is given by
          \[
          \omega_{FS} = \frac{1}{b}\sqrt{-1}\,\partial\bar\partial\log\Bigl(1 + \sum_{i \neq j} \bigl|Z_i/Z_j\bigr|^2\Bigr).
          \]
\end{itemize}
Here, $\mathbb CP^\infty$ is the projectivization of $\mathbb C^\infty$. We denote these spaces by $F(N, b)$, called \emph{Fubini--Study spaces}. In \cite{ca53}, Calabi study the  K{\"a}hler immersions into $F(N, b)$ using the diastasis function which is defined as follows. Let  $(M,g)$ be a real-analytic K{\"a}hler manifold and fix $p\in M$. Choose a local real analytic K{\"a}hler potential $\phi$ for $g$ in coordinate $\varphi$ near $p$. The \emph{diastasis function} near $p$ using analytic continuation is defined as
\begin{align*}
    \mathfrak{D}^M(z, w) = \phi(z, \bar{z}) + \phi(w, \bar{w}) - \phi(z, \bar{w}) - \phi(w, \bar{z}).
\end{align*}
 If $\varphi(p) = w_0$, the \emph{diastasis potential} at $p$ is $\mathfrak D_p^M(z) = \mathfrak D^M(z, w_0)$
which is again a real-analytic K{\"a}hler potential near $p$. It is independent of the choice of potential and coordinates. A holomorphic map $f \colon M \to N$ of real-analytic K{\"a}hler manifolds is an isometry if and only if it preserves the diastasis potential (see \cite{ca53}, Proposition $1-6$), i.e.,
\begin{equation}\label{Eqdiastasispreserved}
    f^*g^N = g^M \quad \text{if and only if}\quad \forall\ p \in M:\ \mathfrak D_p^M = \mathfrak D^N_{f(p)} \circ f.
\end{equation}
For a modern exposition; see \cite{LM18}. In this section, the following examples will be useful.
\begin{example}\label{Ex1}
It can be seen that the K{\"a}hler manifold $(\Omega, g_{\operatorname{FS}}^{\Om})$ is real analytic. For $p \in \Omega$, the diastasis around $p$ is globally defined on $\Omega$, given by
\begin{equation}\label{EqdiastasisBergman}
    \mathfrak D^\Omega(z, w) = \log\left( \frac{S_\Omega(z) S_\Omega(w)}{|S_\Omega(z, w)|^2}\right).
\end{equation}
\end{example}
\begin{example}\label{Ex2}
For $p \in F(N,b)$ with $b \neq 0$, the diastasis around $p$ in Bochner coordinates centered at $p$ is
\begin{equation}\label{Eqdiastasishyperbolic}
    \mathfrak D^b(z, w) = \frac{1}{b} \log\left(\frac{(1 + b \left\|z\right\|^{2})(1 + b \left\|w\right\|^{2})}{|1 + b\langle z,w \rangle|^{2}}\right).
\end{equation}
In the case $b > 0$, the diastasis around any point can be expressed in homogeneous coordinates $Z = [Z_0, \dots, Z_N],\ W = [W_0, \dots, W_N]$ as
\begin{equation}\label{Eqdiastasisprojective}
    \mathfrak D^b(Z, W) = \frac{1}{b} \log \left( \frac{(\sum_{i = 0}^N | Z_i |^2 )(\sum_{i = 0}^N | W_i |^2 )}{(\sum_{i = 0}^N Z_i \overline{W}_i )^{2}} \right)
\end{equation}
\end{example}
\begin{example}\label{Ex3}
Let $(M, g)$ be a real analytic K{\"a}hler manifold, then for every $ \lambda > 0$, $(M, \lambda g)$ is also real analytic. Then, we have
\begin{equation}\label{Eqdiastasisrescaling}
   \mathfrak D^{\lambda g} = \lambda \mathfrak D^g.
\end{equation}
\end{example}

\subsection*{Transversality}
Let $\Omega_1=\{z\in U_1:\rho_1(z)<0\}\subset\mathbb{C}^n$ and $\Omega_2=\{z\in U_2:\rho_2(z)<0\}\subset\mathbb{C}^m$ be bounded domains with $C^1$-regular boundaries where each $U_{i}$ is an open neighborhood of $\overline{\Omega}_i$. A map $F\in C^1(U_1, U_2)$ with $F(\partial \Omega_1) \subset \partial \Omega_2$ is \emph{transverse} to $\partial \Omega_2$ at $x \in \partial \Omega_1$ if
\begin{align*}
    (D_xF)(T_x\Omega_1) + T_{F(x)} \partial \Omega_2 = T_{F(x)} \Omega_2.
\end{align*}
Equivalently,
 \begin{align*}
    \langle (D_xF)(\nabla \rho_1(x)), \nabla \rho_2(F(x)) \rangle \neq 0,
\end{align*}
This implies $\langle \nabla \rho_1(x), \nabla(\rho_2 \circ F)(x) \rangle\neq 0$ so $\nabla(\rho_2 \circ F)(x)$ is nowhere zero on $\pa\Om_1$. $F$ is said to be \emph{transverse along} $\partial \Omega_2$ if it is transverse to $\partial \Omega_2$ at any point of $\partial \Omega_1$.

\medskip 

Following the ideas from \cite{p25}, we finally proof Theorem \ref{kah_imm_ball}.
\begin{proof}[Proof of Theorem \ref{kah_imm_ball}]
    We work in any dimension $n$ first. After translation and rotation, assume $0 \in \Omega$ and $f(0) = 0$. Since $(\mathbb{B}^N, g_{\operatorname{FS}}^{\mathbb{B}^N}) \equiv F(N, -\frac{1}{N})$, we choose the defining function for $F(N, -\frac{1}{N})$:
\begin{align*}
    \rho_{\mathbb{B}^N}(z) = 1 - \frac{1}{N} || z ||^2\ ,\quad z \in F(N, -\frac{1}{N}).
\end{align*}
By \eqref{EqdiastasisBergman}, \eqref{Eqdiastasishyperbolic}, \eqref{Eqdiastasisrescaling}, we have 
$$f^{*}g_{\operatorname{FS}}^{\mathbb{B}^{N}}=\lambda g_{\operatorname{FS}}^{\Omega},\quad f^{*}\mathfrak D^{g_{\operatorname{FS}}^{\mathbb{B}^{N}}}=\mathfrak D^{\lambda g_{\operatorname{FS}}^{\Omega}},\quad \mathfrak D^{\lambda g_{\operatorname{FS}}^\Omega} = \lambda \mathfrak D^{g_{\operatorname{FS}}^\Omega}.$$
Then for any $z\in\Omega$,
$$\lambda \mathfrak D^{\Omega}(z,0)=\mathfrak D^{\mathbb{B}^N}(f(z),0),$$
where
$$\mathfrak D^{\Omega}(z,0) =\log\left( \frac{S_\Omega(z) S_\Omega(0)}{|S_\Omega(z, 0)|^2}\right)$$
and 
$$\mathfrak D^{\mathbb{B}^N}(f(z),0)=-N\log \left(1-\frac{1}{N}\left\|f(z)\right\|^{2}\right)=\log\frac{1}{\rho_{B^N}(f(z))^{N}}.$$
Hence \eqref{Eqdiastasispreserved} implies
\begin{equation}\label{EqdiastasisFefferman}
    S_\Omega(z) = \frac{| S_\Omega(z, 0) |^2}{S_\Omega(0)} \frac{1}{\rho_{\mathbb{B}^N}(f(z))^{\frac{N}{\lambda}}}\ ,\quad z \in \Omega.
\end{equation}
Because $F(\Omega) = f(\Omega) \subset \mathbb{B}^N$,$F(\partial \Omega) \subset \partial \mathbb{B}^N$,$F(U \setminus \overline\Omega) \subseteq V \setminus\overline{\mathbb{B}^N}$, the function $\rho_\Omega := \rho_{\mathbb{B}^N} \circ F \in C^\infty(\overline{\Omega})$ satisfies
\begin{align*}
    \Omega = \{\rho_\Omega > 0\}\ ,\quad \partial \Omega = \{ \rho_\Omega = 0 \},
\end{align*}
Furthermore, from transversality of $F$, we deduce that $\nabla \rho_\Omega |_{\partial \Omega} \neq 0$. Hence, $\rho_\Omega\in C^\infty(\ov\Om)$ is a defining function for $\Omega$ which implies that $\Omega$ is $C^\infty$-smoothly bounded. We next recall the Fefferman asymptotic expansion formula (see \cite{fc74,bs76}) as follows
\begin{equation}\label{asym_exp:sgo}
    S_\Om(z)=\frac{\phi_\Om(z)}{\rho_\Om(z)^n}+\psi_\Om(z)\log\rho_\Om(z)
\end{equation}
with $\phi_\Om,\psi_\Om\in C^\infty(\ov\Om)$ and $\phi|_{\pa\Om}\neq 0$.
Using \eqref{asym_exp:sgo} in \eqref{EqdiastasisFefferman} we get
\begin{align*}
    \frac{| S_\Omega(z, 0) |^2}{S_\Omega(0)} - \phi_\Omega(z) \rho_\Omega(z)^{\frac{N}{\lambda} - n} - \rho_\Omega(z)^{\frac{N}{\lambda}} \psi_\Omega(z) \log \rho_\Omega(z) = 0\ ,\quad z \in \Omega.
\end{align*}

Since $S_\Omega(\cdot,0)\in C^\infty(\overline{\Omega})$; see \cite{bs76}, Lemma 2.2 of \cite{fw97} shows that, whenever
\[
\frac{N}{\lambda}-n\in\mathbb N,
\]
the function $\rho_\Omega^{N/\lambda}\psi_\Omega$ vanishes to infinite order on $\partial\Omega$. As $\nabla\rho_\Omega\neq 0$ on $\partial\Omega$, this implies that $\psi_\Omega$ vanishes to infinite order on $\partial\Omega$. For $n=2$, such vanishing forces $\partial\Omega$ to be locally spherical by \cite{hkn93}. Hence, if $\Omega\subset\mathbb C^2$ is simply connected, \cite{CJ96} implies that $\Omega$ is biholomorphic to $\mathbb B^2$. This completes the proof.

\end{proof}

\subsection*{Acknowledgements}
The first author expresses her sincere gratitude to Professor Kengo Hirachi for bringing the relevant reference to her attention. The authors are deeply grateful to Professor Xiaoshan Li for his constant encouragement and support during the preparation of this work. They also gratefully acknowledge the financial support provided by the Wuhan University. The first author was partially supported by NSFC (12361131577), and the second author by NSFC (12271411), through Professor Xiaoshan Li.


\end{document}